\newcommand{\BV}{\mathbf{BV}}
\newcommand{\W}[1]{\mathbf{W^{#1}}}
\newcommand{\R}{\mathrm{I\kern-0.21emR}}
\newcommand{\N}{\mathrm{I\kern-0.21emN}}
\newcommand{\C}{\mathbb{C}}
\renewcommand{\geq}{\geqslant}
\renewcommand{\leq}{\leqslant}
\newtheorem{theorem}{Theorem}
\newtheorem{definition}{Definition}
\newtheorem{lemma}{Lemma}
\newtheorem{example}{Example}
\theoremstyle{definition}\newtheorem{remark}{Remark}
\newcommand{\cqfd}
{%
\mbox{}%
\nolinebreak%
\hfill%
\rule{2mm}{2mm}%
\medbreak%
\par%
}
\title{Well-posedness for scalar conservation laws with moving flux constraints}
\author{
Thibault Liard\footnote{Inria Grenoble Rh\^one-Alpes, France (\texttt{thibault.liard@inria.fr}).}
\and
Benedetto Piccoli\footnote{Department of Mathematical Sciences and CCIB, Rutgers University-Camden, Camden,
NJ, USA (\texttt{piccoli@camden.rutgers.edu}).}
}
\date{}
\begin{document}
\maketitle

\begin{abstract}
We consider a strongly coupled ODE-PDE system representing moving bottlenecks immersed in vehicular traffic. 
The PDE  consists of a scalar conservation law  modeling  the traffic flow evolution and the ODE models the trajectory of a slow moving vehicle. The moving bottleneck influences the  bulk traffic flow 
via a point flux constraint, which is given by an inequality on the flux at the slow vehicle position. 
We prove uniqueness and continuous dependence of solutions with respect to initial data of bounded variation. The proof is based on a new backward in time method established to capture the values of the norm of generalized tangent vectors at every time. 
\end{abstract}

\noindent\textbf{Keywords:} Scalar conservation laws with constraints; Wave-front tracking; Traffic flow modeling; Tangent vectors; backwards in time method.

\medskip

\noindent\textbf{AMS classification:} 35L65; 90B20


\section{Introduction and main results}\label{sec:intro}
\subsection{Presentation of the problem}
Macroscopic models, in particular fluid-dynamic ones, for vehicular traffic were extensively
studied and used in recent years in the applied math and engineering communities .
The mains reasons for this success include the many analytic tools available 
\cite{BFLPZ13,BBNS14,BCGHP14,GP06} and
their usability with sensors data (both fixed and probe) \cite{CDP10,GP16,WBTPB10}.
Probe sensors has been successfully implemented for traffic monitoring since mid 2000s \cite{ITS08}
and the new frontiers are in the area of traffic control. A lot of attention is now focused on
Connected and Autonomous Vehicles (CAVs) seen as distributed probe actuators \cite{Arizona17}.
CAVs can be represented as moving bottlenecks and some modeling approach are available,
\cite{DP14,DP17,LMP2011}, based on flux constraints and coupled ODE-PDE systems. 
To develop a complete theoretical framework for traffic control via moving bottleneck,
the main theoretical question to be addressed is the well-posedness of the ODE-PDE systems.
This paper addresses this question for the model introduced by Delle Monache and Goatin in \cite{DP14}.

Let us describe in more detail the ODE-PDE models for moving bottlenecks.
In \cite{LMP2011}, to represent the capacity drop of car flow due to the presence of a slow vehicle, the authors multiply the usual flux function by a cut-off function. To obtain a unique solution in the sense of Fillipov  (\cite{Fil88}), they assume that the slow vehicle travels at maximal speed. In \cite{DP14}, the authors represent the moving constraint by an pointwise inequality on the flux and prove that the Cauchy problem \eqref{eq} admits a solution 
using wave-front tracking approximations. In \cite{DP17},  a proof of the stability of  solutions for a weakly coupled PDE-ODE system is given. The term ``weakly coupled'' means that the position of the slow vehicle is assumed to be assinged. 
Some numerical methods 
have been developed in \cite{DJ05, DJ05b, DP14b}. In \cite{GC17}, the authors 
replace the single conservation law, called Lighthill-Whitham-Richards (briefly LWR) 
first order model \cite{LW55,Ric56}, with a system of conservation laws,
called the Aw-Rascle-Zhang (briefly ARZ) 
second order model \cite{AR00,Zha02}. 
They define two different Riemann Solvers and they propose numerical methods.  

Here we focus on the model proposed in \cite{DP14}, thus
we study the following strongly coupled ODE-PDE system
\begin{equation} \label{eq} \begin{array}{ll}
\partial_t \rho + \partial_x (\rho(1-\rho))=0, & (t,x)\in \R^+\times \R,\\ 
\rho(0,x)=\rho_{0}(x), & x\in \R,\\
f(\rho(t,y(t)))-\dot y(t)\rho(t,y(t)) \leq F_{\alpha}:=\frac{\alpha}{4}(1-\dot y(t))^2, & t\in \R^+,\\ 
\dot y(t)=\omega (\rho(t,y(t)+))), & t\in \R^+,\\ 
y(0)=y_0, & x\in \R.\\
\end{array}
\end{equation}
Above, $\rho=\rho(t,x)\in [0,1]$ is the mean traffic density, $f$ is the flux defined by 
$$ f(\rho)=\rho v(\rho) \quad \text{with} \quad  v(\rho)=1-\rho. $$
The variable $y$ denotes the slow vehicle (briefly SV) position and the velocity of the SV 
is described by :
\begin{equation} \label{velocity}
\omega(\rho)=\left\{ \begin{array}{ll}
V_b &\text{if} \,  \rho \leq \rho^*:=1-V_b,\\
v(\rho) & \text{otherwise,}\\
\end{array} \right. \end{equation}
where $V_b\in (0,1)$ denotes the maximal speed of the SV. 
For future use, we also defined $ \check{\rho}_{\alpha}$ and $ \hat{\rho}_{\alpha}$ with $ \check{\rho}_{\alpha} \leq \hat{\rho}_{\alpha}$ to be the solutions of 
$\frac{\alpha}{4}(1-V_b)^2+V_b \rho=f(\rho)$ and $\rho^*$
to be the solution of $V_b\rho=f(\rho)$.
See also Figure \ref{fig:1}.

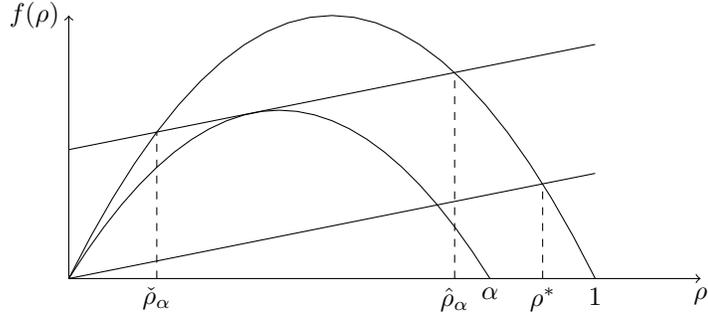
\begin{figure} 
\centering
\begin{tikzpicture}[scale=7]
\draw[->] (0,0) -- (1.2,0);
\draw (1.2,0) node[below] {$\rho$};
\draw [->] (0,0) -- (0,1/2);
\draw [domain=0:1] plot (\x, 2*\x*(1-\x);
\draw [domain=0:0.8] plot (\x, 2*\x*(0.8-\x);
\draw [domain=0:1] plot (\x, 0.2*\x-0.2*0.35+0.315);
\draw [domain=0:1] plot (\x, 0.2*\x);
\draw[dashed] (0.1671572875,0)--(0.1671572875,0.2784314574);
\draw (0.1671572875,0) node[below] {$\check{\rho}_{\alpha}$};
\draw[dashed] (0.7328427125,0)--(0.7328427125,0.3915685424);
\draw (0.7328427125,0) node[below] {$\hat{\rho}_{\alpha}$};
\draw[dashed] (0.9,0)--(0.9,0.18);
\draw (0.9,0) node[below] {$\rho^*$};
\draw (1,0) node[below] {$1$};
\draw (0.8,0) node[below] {$\alpha$};
\draw (0,1/2) node[left] {$f(\rho)$};
\end{tikzpicture}
\caption{Graphical representation of the flux function and  of $ \check{\rho}_{\alpha}$, $ \hat{\rho}_{\alpha}$ and $\rho^*$}\label{fig:1}
\end{figure}

\subsection{Main result}
Let us state the main result of this article. The following  theorem is devoted to 
uniqueness and continuous dependence of solutions for \eqref{eq} with respect to the initial data.

\begin{theorem} \label{main1}
The solution $(\rho,y) \in C^0(\R^+;L^1(\R) \cap BV(\R,[0,1])) \times W^{1,1}(\R^+,\R)$ in the sense of Definition \ref{def:weak-solution-1} for the Cauchy problem  \eqref{eq} depends in a Lipschitz continuous way from the initial datum. More precisely, let $T>0$ and $(\rho^1,y^1)$ and $(\rho^2,y^2)$ two solutions of \eqref{eq} with corresponding initial data $(\rho_0^1,y_0^1)$ and $(\rho_0^2,y_0^2)$, then there exists $C>0$ such that 
$$ \Vert \rho^2(t)-\rho^1(t) \Vert_{L^1(\R)} + \vert y^2(t)-y^1(t) \vert \leq C(  \Vert \rho_0^2-\rho_0^1 \Vert_{L^1(\R)} + \vert y_0^2-y_0^1 \vert),$$
for every $t\in [0,T]$.
\end{theorem}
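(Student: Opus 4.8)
The plan is to prove Theorem~\ref{main1} by the method of generalized tangent vectors (shift differentials) applied to wave-front tracking approximations, tracking both the $L^1$ distance of the densities and the distance of the bottleneck trajectories simultaneously. First I would recall the standard setup: approximate each initial datum $(\rho_0^i,y_0^i)$ by piecewise constant data, run the wave-front tracking algorithm for system~\eqref{eq} (using the Riemann solver at the constraint from \cite{DP14}), and obtain approximate solutions $(\rho^{i,\eps},y^{i,\eps})$. One then connects the two approximate solutions by a one-parameter path $\theta\mapsto (\rho^\theta_0,y^\theta_0)$, $\theta\in[0,1]$, in the space of piecewise constant data, with $(\rho^0_0,y^0_0)$ the approximation of datum $1$ and $(\rho^1_0,y^1_0)$ that of datum $2$, and estimates the growth in time of the norm of the generalized tangent vector $(v^\theta(t,\cdot),\xi^\theta(t))$ propagated along the solution, where $v$ measures shifts of the fronts of $\rho$ and $\xi$ measures the shift of the constraint position $y$.

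The core of the argument, and the point where the paper's announced innovation enters, is the evolution of $\|v^\theta(t,\cdot)\|_{L^1}+|\xi^\theta(t)|$. Away from the SV, the conservation law is Kruzhkov-well-posed and the tangent vector norm is non-increasing across interactions of bulk waves, by the classical Bressan--LeFloch theory for scalar (even genuinely nonlinear) conservation laws; the only new interactions to control are (a) a bulk wave hitting the constrained trajectory $x=y^\theta(t)$, and (b) the nonsmooth switching in the velocity law $\omega$ at $\rho=\rho^*$ and in $F_\alpha$. The key difficulty is that the position $y^\theta(t)$ itself depends on $\theta$ through the trace $\rho(t,y(t)+)$, so a shift in $\rho$ produces a shift $\xi$ in $y$, which in turn displaces the constraint and creates/destroys waves at $x=y$; one must show this feedback loop is contractive, i.e. that the finitely many interaction times with the bottleneck each contribute a bounded (uniformly in $\eps$) multiplicative factor and, crucially, no cumulative blow-up. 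This is where I expect the ``backward in time'' method advertised in the abstract to be essential: rather than bounding the tangent norm forward through every interaction (where signs may be unfavorable), one fixes a time $t$, identifies which fronts at time $t$ are ``responsible'' for the current value of $\rho(t,y(t)+)$, and traces them backward to the initial data, thereby expressing $\xi^\theta(t)$ and the boundary contribution to $v^\theta(t)$ directly in terms of the initial perturbation with controlled constants, bypassing the accumulation problem.

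Granting the uniform bound $\|v^\theta(t,\cdot)\|_{L^1}+|\xi^\theta(t)|\le C\big(\|v^\theta(0,\cdot)\|_{L^1}+|\xi^\theta(0)|\big)$ for all $\theta$ and all $t\in[0,T]$, with $C$ independent of $\eps$, the standard integration-along-the-path argument gives
\[
\|\rho^{2,\eps}(t)-\rho^{1,\eps}(t)\|_{L^1(\R)}+|y^{2,\eps}(t)-y^{1,\eps}(t)|
\le C\big(\|\rho^{2,\eps}_0-\rho^{1,\eps}_0\|_{L^1(\R)}+|y^{2,\eps}_0-y^{1,\eps}_0|\big),
\]
using that the path can be chosen so that $\int_0^1(\|v^\theta(0,\cdot)\|_{L^1}+|\xi^\theta(0)|)\,d\theta$ is bounded by the right-hand side up to $O(\eps)$. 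Then one passes to the limit $\eps\to0$: the wave-front tracking approximations converge in $C^0([0,T];L^1_{loc})$ to weak solutions in the sense of Definition~\ref{def:weak-solution-1}, and by an argument identifying any such solution with the limit of the algorithm (which is the uniqueness content), the estimate passes to the limit, yielding the theorem. Finally, taking $\rho^1_0=\rho^2_0$, $y^1_0=y^2_0$ gives uniqueness as a corollary. The main obstacle throughout is step two — making the feedback between density shifts and trajectory shifts at the moving constraint quantitatively contractive — and the backward-in-time bookkeeping is precisely the device designed to overcome it; the remaining steps (wave-front tracking existence/compactness, the path-integration identity, the limit passage) are by now routine adaptations of the Bressan--LeFloch--Piccoli machinery.
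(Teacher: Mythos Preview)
Your plan is essentially the paper's: tangent vectors on wave-front tracking approximations, a one-parameter path of piecewise constant data, backward-in-time tracing of fronts to control the constraint feedback, then limit passage via the convergence of the scheme.

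One caution on your phrasing of the crux: ``a bounded multiplicative factor per interaction with the bottleneck'' would by itself only give a constant exponential in the number of SV interactions, and that number diverges as $\eps\to0$, so this alone does not bypass accumulation. The paper's actual mechanism is different. It first classifies all interactions that create or cancel the non-classical shock into four types (NC1--NC4) and shows that two of these can occur at most once over $[0,T]$ while the others reduce, at the level of tangent-vector evolution, to ordinary wave--wave or wave--SV interactions. For the remaining chain of wave--SV interactions it then proves a \emph{telescoping} bound: a geometric lemma (Lemma~\ref{pp}) forces the left state of any SV-interacting wave that has an SV-interacting ancestor to lie in $[0,\check\rho_\alpha]$, and in that regime one has the key inequality $\bigl|\tfrac{\psi}{\Delta\rho}\bigr|+|1-\psi|\cdot\tfrac{2}{\rho^*}\le\tfrac{2}{\rho^*}$ (Lemma~\ref{bababa}~C), which collapses the nested sum of weights to a single $\tfrac{2}{\rho^*}$ rather than a product of factors. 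This is what makes the final constant depend only on $TV(\rho_0)$ and $\rho^*$, not on $\eps$, and is the content you are (correctly) flagging as the paper's main contribution but leaving as a black box.
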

The article is organized as follows. In Section \ref{WW0} and Section \ref{WW1}, we recall some properties of system \eqref{eq} (the Riemann solver and the existence of solutions). In section \ref{tangent}, we use the notion of generalized tangent vectors to estimate the $L^1$ distance of two different piecewise constant  approximate solutions constructed by wave-front tracking method.  In Section \ref{110}, we introduce a new mathematical object which traces the discontinuities of waves backwards in time. Section \ref{111} deals with all the possible interactions   between two waves and between a wave and the slow vehicle trajectory giving the evolution of the tangent vectors for each interaction. Section \ref{jhjh} is devoted to the proof of Theorem \ref{main1}; from the final state, we manage to follow backwards in time all the discontinuities capturing the evolution of generalized tangent vectors.


\section{Notations and Preliminary materials} \label{WW}
\subsection{The Riemann problem with moving constraints} \label{WW0}
This section is devoted to the study of the Riemann problem. We consider \eqref{eq} with Riemann type initial data
\begin{equation} \label{initial} \rho_0(x)=\left\{ \begin{array}{ll}
\rho_L&\text{if}\quad x<0\\
\rho_R&\text{if}\quad x>0
\end{array} \right. \quad \text{and} \quad y_0=0.
\end{equation}
The definition of the Riemann solver for \eqref{eq} and \eqref{initial} is described in \cite[Section 3]{DP14}; We denote by $\mathcal{R} $ the standard Riemann solver for 

\begin{equation} \label{bobo} \left\{\begin{array}{ll}
\partial_t \rho + \partial_x (\rho(1-\rho))=0, & (t,x)\in \R^+\times \R,\\ 
\rho(0,x)=\rho_{0}(x), & x\in \R,\\
\end{array}\right.
\end{equation}
where $\rho_0$ is defined in \eqref{initial}.
\begin{definition}{ \cite[Section 3]{DP14}}The constrainted Riemann solver $\mathcal{R}^{\alpha}:[0,1]^2 \mapsto \textbf{L}_{\text{loc}}^1(\R;[0,1])$ for \eqref{eq} and \eqref{initial} is defined as follows.
\begin{enumerate}
\item If $f(\mathcal{R}(\rho_L,\rho_R)(V_b))>F_{\alpha}+V_b \mathcal{R}(\rho_L,\rho_R)(V_b)$, then
 \begin{equation*} \mathcal{R}^{\alpha}(\rho_L,\rho_R)(x \slash t)=\left\{ \begin{array}{ll}
\mathcal{R}(\rho_L,\hat \rho_{\alpha})(x \slash t)&\text{if}\quad x<V_b t,\\
\mathcal{R}(\check \rho_{\alpha},\rho_R)(x \slash t)&\text{if}\quad x\geq V_b t,
\end{array} \right. \quad \text{and} \quad y(t)=V_b t.
\end{equation*}
\item If $V_b\mathcal{R}(\rho_L,\rho_R)(V_b) \leq f(\mathcal{R}(\rho_L,\rho_R)(V_b))\leq F_{\alpha}+V_b \mathcal{R}(\rho_L,\rho_R)(V_b)$, then
 \begin{equation*} \mathcal{R}^{\alpha}(\rho_L,\rho_R)=\mathcal{R}(\rho_L,\rho_R) \quad \text{and} \quad y(t)=V_b t.
\end{equation*}
\item If $ f(\mathcal{R}(\rho_L,\rho_R)(V_b))<V_b\mathcal{R}(\rho_L,\rho_R)(V_b) $, then
 \begin{equation*} \mathcal{R}^{\alpha}(\rho_L,\rho_R)=\mathcal{R}(\rho_L,\rho_R) \quad \text{and} \quad y(t)=v(\rho_R) t.
\end{equation*}
\end{enumerate}
\end{definition}
\subsection{The Cauchy problem: existence of solutions} \label{WW1}
We introduce the definition of solutions to the constrained Cauchy problem~(\ref{eq}) as in \cite[Section 4]{DP14}.
\begin{definition}\cite[Section 4]{DP14} 
  \label{def:weak-solution-1}
  The couple
  \begin{displaymath}
    \left( \rho,y \right)  \in \C_0 \left( [0, +\infty[; \textbf{L}^1\cap \BV(\R;[0,1]) \right)\times \W{1,1} \left( [0, +\infty[; \R \right) 
  \end{displaymath}
  is a solution to~(\ref{eq}) if
  \begin{enumerate}
  \item the function $\rho$ is a weak solution to the PDE in~(\ref{eq}), for $(t,x) \in (0,+\infty) \times \R$;

   \item $\rho(0, x) = \rho_0(x)$, for a.e. $x \in \R$;

\item  the function $y$ is a Caratheodory solution to the ODE in~(\ref{eq}), i.e. for a.e. $t \in \R^{+}$ 
\begin{equation}
 y(t)= y_{0}+ \int_{0}^{t}\omega\left( \rho(s,y(s)+) \right)\,ds\,;
\end{equation}
\item  the constraint is satisfied, in the sense that for a.e. $t \in \R^{+}$
\begin{equation}
\lim_{x \rightarrow y(t)\pm}\, \left(f(\rho)-w(\rho) \rho \right)(t,x) \leq F_{\alpha} \,;
\end{equation}
  \end{enumerate}
\end{definition} 
Let $\rho_0 \in BV(\R,[0,1])$. The existence of solutions for \eqref{eq} in the sense of Definition \ref{def:weak-solution-1} is proved in \cite{DP14}. The authors construct a sequence of approximation solutions via the wave-front tracking method and prove its convergence. 

\subsection{ Wave-front tracking and generalized tangent vectors} \label{tangent}
Solutions to Cauchy problems for conservation laws can be constructed by various methods
including wave-front tracking, see \cite{B00,HR15}.
In simple words, wave-front tracking works in the following way.
One first approximate the initial data by piecewise constant
functions, then solve the corresponding Riemann problems and piece solutions together
approximating rarefaction waves with fans of rarefaction shocks. 
Then each wave moves with the speed prescribed by the Rankine-Hugoniot condition and
when two waves meet a new Riemann problem is solved.
Since our problem is scalar, we can use the very first algorithm proposed by Dafermos \cite{Daf72}.
For the system case and application to traffic see \cite{GP06}.

We introduce on $[0,1]$ the mesh $\mathcal{M}_n=\{\rho_i^n\}_{i=0}^{2^n}$ defined by 
 $$\mathcal{M}_n=(2^{-n} \N \cap [0,1]).$$
 To introduce the points $\check{\rho}_{\alpha}, \hat{\rho}_{\alpha}$ and $\rho^*$, we modify the mesh $\mathcal{M}_n$ as in \cite[Section 4.1]{DP14},
 \begin{itemize}
\item if $\min_i |\check{\rho}_{\alpha}-\rho_i^n|=2^{-n-1}$ then we add the point $\check{\rho}_{\alpha} $ to the mesh 
$$\widetilde{\mathcal{M}_n}=\mathcal{M}_n \cup \{ \check{\rho}_{\alpha} \};$$
\item if $|\check{\rho}_{\alpha}-\rho_l^n|=\min_i |\check{\rho}_{\alpha}-\rho_i^n|<2^{-n-1}$ then we replace $\rho_l^n$ by $\check{\rho}_{\alpha} $
$$\widetilde{\mathcal{M}_n}=\mathcal{M}_n \cup \{ \check{\rho}_{\alpha} \} \backslash \{\rho_l^n\};$$
\item we perform the same operation for $\hat{\rho}_{\alpha}$ and for $\rho^*$.
 \end{itemize}
We notice that if $\tilde{\rho}_j^n, \tilde{\rho}_i^n \in \widetilde{\mathcal{M}_n}$ then $\frac{1}{2^{n+1}}\leq \vert \tilde{\rho}_j^n-\tilde{\rho}_i^n\vert \leq \frac{3}{2^{n+1}}$.
For $i=\{1,2\}$, we construct a piecewise constant approximate solution of $\rho_0^i$  denoted by $\rho_0^{i,n}$ such that,  
$$\rho_{0}^{i,n}=\sum_{j=0}^N\rho_{0,j}^{i,n}\chi_{(x^0_{j-1},x^0_j]} \, \, \text{with} \, \, \rho_{0,j}^{i,n} \in \widetilde{\mathcal{M}_n} \quad \text{and} \quad TV(\rho_0^{i,n})\leq TV(\rho_0^{i})$$
which approximates $\rho_0^{i}$ in the sense of the strong $L^1$ topology, that is to say, 
$$ \lim_{n\to \infty}\Vert \rho_0^{i,n}-\rho_0^{i}\Vert_{L^1(\R)}=0.$$
Above $x_1^0<\cdots<x_N^0$ are the points where $\rho_{0}^{i,n}$ is discontinuous. Solving all the Riemann problem for \eqref{bobo} generated by the jump $ (\rho_{0}^{i,n}(x_i^0-),\rho_{0}^{i,n}(x_i^0+))$ for $i=1,\cdots,N$, the solution, denoted by $\rho^{i,n}$, can be prolonged until a first time $t_1$ is reached, when two wave-fronts interact. In the wave-front tracking method, the centered rarefaction waves are  approximated by piecewise constant rarefaction fans where each rarefaction front has strengh less than $\frac{3}{2^{n+1}}$. Thus,  $\rho^{i,n}(t_1,\cdot)$ is still a piecewise constant function, the corresponding Riemann problems can again be approximately solved within the class of piecewise constant functions and so on. Let $y^{i,n}$ the solution of $$ \label{eq3} \left\{ \begin{array}{ll}
\dot y(t)=\omega (\rho^{i,n}(t,y(t)+))), & t\in \R^+,\\ 
y(0)=y_0, & x\in \R.\\
\end{array} \right.
$$
where $\rho^{i,n}(t)$ corresponds to the wave-front tracking approximate solution at time $t$ as described below with initial data $\rho_0^{i,n}$ (see \cite[Section 2.6]{GP06}). We will prove that  
\begin{equation} \label{popo} \Vert \rho^{2,n}(t)-\rho^{1,n}(t) \Vert_{L^1(\R)} + \vert y^{2,n}(t)-y^{1,n}(t) \vert \leq C(T)(  \Vert \rho_0^{2,n}-\rho_0^{1,n} \Vert_{L^1(\R)} + \vert y_0^{2}-y_0^{1} \vert). \end{equation}
We use the notion of generelized tangent vectors , introduced in \cite{B93,BCP00} for systems of conservation laws and adapted to traffic applications in \cite{BP08,GP09}.  The main idea is to estimate the $L^1$-distance viewing $L^1$ as a Riemannian manifold. Let $[a,b] \subset \R$ and $PC$ denotes the set of piecewise constant functions with finitely many jumps. An \textit{elementary path} is a map $\gamma : [a,b] \to PC$ of the form 
$$\gamma(\theta)=(\sum_{j=1}^N \rho_j \chi_{[x_{j-1}^\theta,x_{j}^\theta]},y^{\theta}),$$
where $x_j^\theta=x_j+\xi_{j }\theta$, $y^\theta=y+\xi_b \theta$ with  $x_{j-1}^\theta<x_{j}^\theta$ for every $\theta \in [a,b]$ and $j=1 \cdots N$. 
The length of an elementary path is defined as:
\[
\Vert  \gamma\Vert=\int_a^b \sum_{j} \vert \Delta \rho_j \xi_j\vert + \vert \xi_b \vert \, d\theta,  
\]
and it is easy to check that this is compatible with the usual $L^1$ metric, i.e.
$\Vert  \gamma\Vert=\Vert  \gamma\Vert_{L^1(\R)}$.\\
The functions $(\rho_0^{1,n},y_0^{1})$ and $(\rho_0^{2,n},y_0^{2})$ can be joined by a piecewise elementary path $\gamma_0$ with a finite number of pieces. 
If we denote by $\gamma_t(\theta)$  the path
obtained at time $t$ via wave-front tracking, then: 
\begin{equation} \label{test1}
\Vert \rho^{2,n}(t)-\rho^{1,n}(t) \Vert_{L^1(\R)} + \vert y^{2,n}(t)-y^{1,n}(t) \vert \leq \inf_{\gamma_t} \Vert \gamma_t \Vert_{L^1(\R)},
\end{equation}
and 
\begin{equation} \label{test2}
\inf_{\gamma_0} \Vert \gamma_0 \Vert_{L^1(\R)}=\Vert \rho_0^{2,n}-\rho_0^{1,n} \Vert_{L^1(\R)} + \vert y_0^{2}-y_0^{1} \vert.
\end{equation}
For every $t\in [0,T]$, $\gamma_t$  is a piecewise elementary path, thus $\gamma_t$ admits wave shifts denoted by $\xi_i(t,\theta)$ and an SV shift denoted by $\xi_b(t,\theta)$. Therefore for a.e $\theta \in [0,1]$ and $t\in[0,T]$, 
\begin{equation} \label{test3}
\Vert  \gamma_t\Vert_{L^1(\R)}=\int_0^1 \sum_{k} \vert \Delta \rho^n_k(t,\theta)\xi_k^n(t,\theta)\vert + \vert \xi^n_b(t,\theta) \vert \, d\theta,  
\end{equation}
where $\Delta \rho^n_k(t,\theta)$ are the signed strengths of the $k^{\text{th}}$-waves. Thanks to \eqref{test1}, \eqref{test2} and  \eqref{test3}, to prove inequality \eqref{popo} it is enough to show, for every $\theta \in [0,1]$,
\begin{equation} \label{gg}
 \sum_{k} \vert \Delta \rho^n_k(T,\theta)\xi_k^n(T,\theta)\vert + \vert \xi^n_b(T,\theta) \vert  \leq C \left(\sum_{k} \vert \Delta \rho^n_k(0,\theta)\xi_k^n(0,\theta)\vert + \vert \xi^n_b(0,\theta) \vert \right),
\end{equation}
with $C>0$ independent of $n$.
 To simplify the notations, we drop the dependence on $\theta$ in \eqref{gg}. The following sections are devoted to the proof of \eqref{gg}.
 
\subsection{Introduction of $K(n,t_1,t_2,k)$} \label{110}
 In the sequel,  $\rho_L^k$ (resp.  $\rho_R^k$) denotes the car density at the left side (resp. at right side) of a $k^{\text{th}}$-discontinuity. Our goal in this section is to track the ancestors of a discontinuity along a wave-front tracking solution,
 without taking account interactions with the SV trajectory. 
 \begin{definition} \label{class}
We define the following waves and interactions:
\begin{itemize}
\item A classical shock $(\rho_l,\rho_r)$ is \textbf{either} a discontinuity such that $\rho_l<\rho_r$ (shock) \textbf{or} a discontinuity such that $\rho_r<\rho_l$ and $\rho_l-\rho_r\leq \frac{3}{2^{n+1}}$ (rarefaction).
\item A non classical shock $(\rho_l,\rho_r)$  is a discontinuity such that $\rho_r=\check \rho_{\alpha}$ and $\rho_l=\hat \rho_{\alpha}$. A non classical shock
can appear only along the SV trajectory.
\item A wave-wave interaction is an interaction between two waves away from the SV trajectory.
\item A wave-SV interaction is an interaction between a wave and the SV trajectory without creating or cancelling a non classical shock.
\end{itemize}
\end{definition}
\begin{definition} We now define the concept of ancestor.
\begin{itemize}
\item $K(n,t)$ denotes the set of classical shocks at time $t$. 
\item $i \in K(n,t_1)$ is an ancestor of  $j\in K(n,t_2)$ if $t_1 \leq t_2$ and $i$ can be connected by to $j$ via waves produced by wave-front tracking via interactions.
\item Let $0\leq t_1\leq t_2$. The set $K(n,t_1,t_2,k)$  denotes the set of classical shocks at time $t_1$ which are the ancestors of the $k^{\text{th}}$-wave with $k\in K(n,t_2)$ (see Example \ref{easy}). Moreover, we have $K(n,t_2,t_2,k)=\{k\}$.
\end{itemize}
\end{definition}

The following Lemma gives some basic properties of $K(n,t)$ and $K(n,t_1,t_2,k)$.
\begin{lemma} \label{entropy}
For every $0<t_1\leq t_2$, the following holds.
\begin{itemize}
\item Let $k\in K(n,t_2)$ and  $j\in K(n,t_2)\backslash \{k\}$. For every $(p,q)\in K(n,t_1,t_2,j)\times K(n,t_1,t_2,k)$, we have $p \neq q$.
\item $ \vert K(n,t_1,t_2,j)\vert  \leq  \vert K(n,t_0,t_2,j) \vert$  for every $0<t_0 \leq t_1$, where $\vert A \vert $ denotes the cardinality of the set $A$.
\item$K(n,t_1)=\sqcup_{k\in K(n,t_2)} K(n,t_1,t_2,k).$
\end{itemize}
\end{lemma}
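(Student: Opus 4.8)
The plan is to establish each of the three claims essentially by induction on the number of interactions occurring in the time interval $[t_1,t_2]$, using the combinatorial structure of the wave-front tracking scheme: between consecutive interaction times the sets $K(n,t)$ and $K(n,t_1,t_2,k)$ are constant, so it suffices to analyze how a single interaction (a wave-wave interaction, or a wave-SV interaction, in the sense of Definition \ref{class}) modifies these sets. First I would observe that a single wave-wave interaction merges two incoming waves into one or more outgoing waves (for a scalar flux, at most two, counting rarefaction fans as their constituent fronts), and that every outgoing classical shock has as ancestors exactly the union of the ancestors of the incoming ones; a wave-SV interaction likewise sends classical shocks to classical shocks with the ancestor sets inherited. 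Creation or cancellation of a non-classical shock is excluded from both interaction types by Definition \ref{class}, so the classical-shock bookkeeping is clean.

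For the third claim, $K(n,t_1)=\bigsqcup_{k\in K(n,t_2)}K(n,t_1,t_2,k)$, I would argue that the union is exhaustive because every classical shock present at time $t_1$ must, following its forward evolution through interactions, either survive to time $t_2$ or be absorbed into some wave that does; either way it is an ancestor of some $k\in K(n,t_2)$. Disjointness is the content of the first bullet, so the third claim follows once the first is proved. For the first claim, that ancestor sets of distinct waves $j\neq k$ at time $t_2$ are disjoint, the key point is a backward-in-time uniqueness statement: tracing a classical shock backward from $t_2$, at each interaction it came from a well-defined set of incoming waves, and two distinct outgoing waves of the same interaction have disjoint incoming ancestor sets (since the incoming waves are partitioned among the outgoing ones). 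Pushing this back through all interactions in $[t_1,t_2]$ by induction gives disjointness at time $t_1$. The second claim, monotonicity $|K(n,t_1,t_2,j)|\leq|K(n,t_0,t_2,j)|$ for $t_0\leq t_1$, then follows because going further back in time can only split waves (a classical shock at time $t_1$ has at least one ancestor at time $t_0$, and distinct classical shocks at $t_1$ have disjoint ancestor sets at $t_0$ by the first claim applied on $[t_0,t_1]$), so the cardinality is nondecreasing as we move backward.

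The main obstacle I anticipate is handling rarefaction fans consistently: a single Riemann problem can produce a fan of many rarefaction fronts, and when such a fan interacts with another wave or with the SV trajectory, one must be careful that the ancestor relation is well-defined front-by-front and that the partition-of-incoming-waves property genuinely holds at each elementary interaction (in particular, that fronts belonging to the same fan created at an interaction all share the same ancestors, namely the incoming waves of that interaction). A secondary technical point is that interaction times may accumulate; but since the wave-front tracking approximation has finitely many fronts and finitely many interactions on $[0,T]$ (a standard property of the Dafermos scheme, cf. \cite{Daf72,B00}), the induction on the number of interactions in $[t_1,t_2]$ is legitimate. Once these points are settled, all three bullets follow from the single-interaction analysis together with the observation that $K(n,t_2,t_2,k)=\{k\}$ provides the base case.
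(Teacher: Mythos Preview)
Your approach---reducing to the analysis of a single interaction and then inducting---is exactly what the paper does. The difference is that the paper's case analysis is much shorter because of one fact you are missing: for this scalar equation with strictly concave flux, \emph{no rarefaction fan can be created at any time $t>0$}. Every wave-wave interaction at positive time is of the form ``two incoming classical waves $\to$ one outgoing classical wave'' (this is Lemma~\ref{123} and the accompanying footnote in the paper: a shock-shock collision gives a single shock, a shock-rarefaction collision gives a single wave, and two rarefaction fronts moving with Rankine--Hugoniot speeds never meet). Hence your anticipated ``main obstacle'' does not arise: there is never more than one outgoing classical wave, so the partition of incoming ancestors among outgoing waves is trivial, and disjointness of ancestor sets is automatic. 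With this observation the three bullets are immediate, exactly as in the paper.

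There is one genuine omission in your sketch. You invoke Definition~\ref{class} to exclude creation and cancellation of non-classical shocks from the interaction types you consider, but such interactions do occur along the SV trajectory (Figures~\ref{NSright} and~\ref{NSleft}) and must be accounted for. The paper handles them in one line: by definition the non-classical shock is not an element of $K(n,\cdot)$, so at the level of classical shocks these interactions are again one-in-one-out, i.e.\ $K(n,\bar t^+)=K(n,\bar t^-)$ and $K(n,\bar t^-,\bar t^+,j)=\{j\}$. Once you add this case to your list, your induction goes through with no further work.
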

\begin{proof}
We analyze the effect of wave interactions on $K$,
then all claims follow immediately.
\begin{itemize}
\item If no interaction occurs over $[t_1,t_2]$, then $K(n,t_2)=K(n,t)$ and $K(n,t,t_2,j)=\{j\}$ for every $(t,j)\in [t_1,t_2]\times K(n,t_2)$.

\item If a wave $1$ interacts with a wave $2$ creating a wave $3$  and no other interaction occurs at $t=\bar t>0$ \footnote{Since the lax entropy condition is verified when $\rho_L<\rho_R$ and no centered rarefaction wave can be created at $t>0$, the interaction presented in Figure \ref{2W} is the only one which can occur at $t>0$ far from the SV trajectory} (see Figure \ref{2W}) then  $K(n,\bar t^-)=\left(K(n,\bar t^+)\backslash \{3\}\right) \cup  \{1,2\}$ and $K(n,\bar t^-,\bar t^+,3)=\{1,2\}$.
\item The possible interactions of a wave the SV trajectory are presented in  Figure \ref{modbus}, Figure \ref{NSright}, Figure \ref{NSleft}  and Figure \ref{left1}. By definition, a non classical shock does not belong to $K(n,\cdot)$. Thus, we have $K(n,\bar t^+)=K(n,\bar t^-)$ and $K(n,\bar t^-,\bar t^+,j)=\{j\}$ with $j\in K(n,\bar t^+)$.
\end{itemize}
\end{proof}
\begin{definition}Let $\prec_n$ be the partial ordered over $\N$ defined as follows: $j \prec_n k$ if there exists $t_1\leq t_2$ such that $k\in K(t_2,n)$, $j\in K(n,t_1)$ and $j\in K(n,t_1,t_2,k)$. That is to say, $j$ is an ancestor of $k$ if and only if $j \prec_n k$.
\end{definition}
\begin{remark} If a centered rarefaction fan is created at $t=0$ (the sequence of discontinuities are denoted by $k_1,\cdots,k_m$), we have  $\xi^n_{k_i}=\xi^n_{k_{i+1}}$ for every $i=\{1,\cdots,m-1\}$ where $\xi^n_k$ denotes the shift of the $k^{\text{th}}$-wave. \\ 
 \end{remark}
\begin{example} \label{easy} In the particular case presented in Figure \ref{2W1}, we have
\begin{itemize}
\item  $K(n,T)=\{14,19\}$,
\item $K(n,t_3^+)=K(n,t_3^-)=\{2,3,4,5,6,9,19\}$,
\item $K(n,t_2^+)=\{2,3,4,5,6,9,19\}$ and $K(n,t_2^-)=\{2,3,4,5,6,9,17,18\}$,
\item $K(n,t_1^+)=\{2,3,4,5,6,7,8,17,18\}$ and $K(n,t_1-)=\{2,3,4,5,6,7,8,15,16,18\}$,
\item $K(n,0)=\{1,7,8,15,16,18\}$.
\end{itemize}
For instance, $K(n,t_2^+,T,14)=\{2,3,4,5,6,9\}$ or $K(n,t_1^+,T,19)=\{17,18\}.$ Moreover, $2 \prec_n 12$ and $7 \prec_n 12$ but $2$ and $7$ are not comparable.   \ \\
\begin{figure}
  \centering
      \includegraphics[width=1\linewidth]{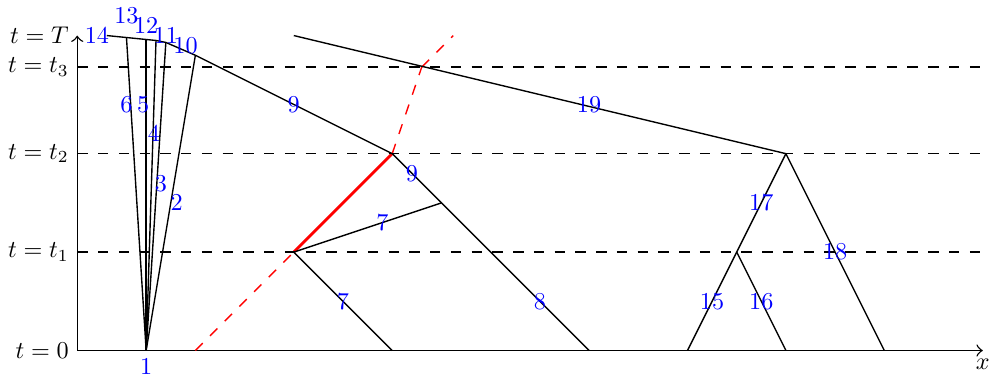}
   \caption{ \label{2W1} A particular configuration of discontinuities with the SV trajectory.}
\end{figure}
\end{example}
\subsection{Wave-wave  or wave-SV interactions } \label{111}
Let $n\in \N^*$ and $T>0$.
We describe all the possible interactions between two waves (see Figure \ref{2W}) and all the possible interactions between a wave and the SV trajectory (Figure \ref{modbus}, Figure \ref{NSright}, Figure \ref{NSleft}  and Figure \ref{left1}). There is no other possible interaction (for more details, we refer to \cite{DP14}). For each interaction, we determine the evolution of the shifts $\xi_k^n$ and the shift of the SV $\xi_b^n$ over time (see Lemma \ref{bb}, Lemma \ref{cc}, Lemma \ref{dd} and Lemma \ref{ee}). 
Since $\xi_k^n$ and $\xi_b^n$ remain constant if no interaction takes place (see \cite{BCP00,GP06}), we can only focus on wave-wave interactions and wave-SV  interactions. We introduce the function $\psi$ defined by
$$\psi(\rho_R,\rho_L)=\left\{ \begin{array}{ll}\frac{w(\rho_L)-w(\rho_R)}{w(\rho_L)-\lambda} & \text{if} \, \, (\rho_R,\rho_L) \in (\rho^*,1)\times ([0,\check{p}_\alpha]\cup[\hat{p}_\alpha,1]), \\ 
0 & \text{otherwise,}
\end{array}\right.
$$ with $\lambda:=1-\rho_L-\rho_R$ and $w$ defined in \eqref{velocity}.
We notice that $1-\psi(\rho_R,\rho_L)=\frac{w(\rho_R)-\lambda}{w(\rho_L)-\lambda}$ for $(\rho_R,\rho_L) \in (\rho^*,1)\times ([0,\check{p}_\alpha]\cup[\hat{p}_\alpha,1])$. By straightforward computations, we have 
\begin{equation} \label{ll}
\psi(\rho_L,\rho_R)=\left\{ \begin{array}{cl}
\frac{\rho_R-\rho^*}{\rho_L+\rho_R-\rho^*} &  \text{if} \quad (\rho_R>\rho^*  \, \, \& \, \,  \rho_l\in [0,\check{\rho}_\alpha]\cup[\hat{\rho}_\alpha,\rho^*]),\\
\frac{\rho_R-\rho_L}{\rho_R} & \text{if} \quad (\rho_R>\rho^* \, \, \& \, \, \rho_l\in [\rho^*,\rho_R]) \quad \text{or} \quad  (\rho^* \leq \rho_R <\rho_L), \\
0 & \text{otherwise.}
\end{array}\right.
\end{equation}
\begin{figure}
  \centering
      \includegraphics[width=0.25\linewidth]{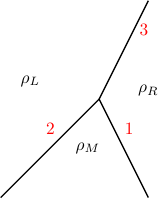}
   \caption{ \label{2W} Two waves interact together producing a third wave}
\end{figure}
\begin{figure}
\centering
\begin{minipage}[c]{0.4\textwidth}
  \centering
  \includegraphics[width=.7\linewidth]{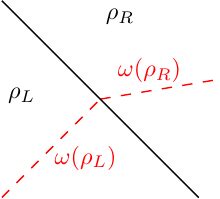}
  \caption*{Case a)  $\rho^*\leq \rho_R <\rho_L$ and $\rho_L-\rho_R \leq \frac{3}{2^{n+1}}$.} 
\end{minipage}\hfill
\begin{minipage}[c]{.47\textwidth}
  \centering
  \includegraphics[width=0.5\linewidth]{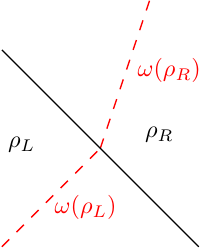}
  \caption*{Case b)  $\rho^*< \rho_R$ and $\rho_L \in [0,\check{\rho}_\alpha]  \cup [\hat{\rho}_\alpha,\rho_R]$.}
\end{minipage}
\caption{\label{modbus} Interaction coming from the right with the SV trajectory}
\end{figure}

\begin{figure}
   \begin{minipage}[c]{.5\linewidth}
   \centering
      \includegraphics[width=0.55\linewidth]{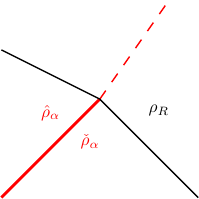}
      \caption*{Case a) $\rho_L=\check{\rho}_\alpha$ and $\rho_R \in [\hat{\rho}_\alpha,1]$}
   \end{minipage} \hfill
   \begin{minipage}[c]{.5\linewidth}
      \includegraphics[width=0.55\linewidth]{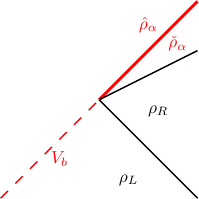}
      \centering
  \caption*{Case b) $\rho_L=\hat{\rho}_\alpha$ and $\rho_R \in [\check{\rho}_\alpha,\hat{\rho}_\alpha]$}
   \end{minipage}

   \caption{ \label{NSright} Interaction coming from the right with the SV trajectory  cancelling (Case a)) or creating (Case b)) a non classical shock.}
\end{figure}

\begin{figure}
   \begin{minipage}[c]{.5\linewidth}
   \centering
   \includegraphics[width=0.7\linewidth]{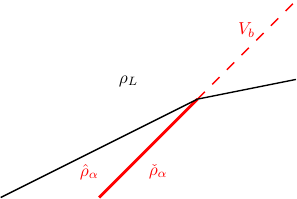}
      \caption*{Case a) $\rho_R=\hat{\rho}_\alpha$ and $\rho_L \in [0, \check{\rho}_\alpha]$}
   \end{minipage} \hfill
   \begin{minipage}[c]{.5\linewidth}
         \includegraphics[width=0.7\linewidth]{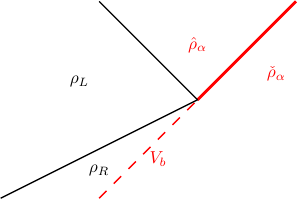}
      \centering
  \caption*{Case b)  $\rho_L \in [\check{\rho}_\alpha,\hat{\rho}_\alpha]$ and $\rho_R=\check{\rho}_\alpha$}
   \end{minipage}
  
   \caption{\label{NSleft} Interaction coming from the left with  the SV trajectory  cancelling (Case a)) or creating (Case b)) a non classical shock.}
\end{figure}

\begin{figure}
  \centering
      \includegraphics[width=0.35\linewidth]{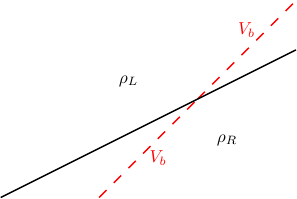}
   \caption{\label{left1}  $\rho_L\in[0,\check{\rho}_\alpha]$, $\rho_R \in [0,\check{\rho}_\alpha]\cup[\hat{\rho}_\alpha,p^*]$ and $\rho_L+\rho_R<\rho^*$. Interaction coming from the left with   the SV trajectory.}
\end{figure}
We introduce the function $\sigma$ defined by $\sigma(\rho_L,\rho_R):= \frac{f(\rho_L)-f(\rho_R)}{\rho_L-\rho_R}$ which represents the speed of the shock wave $(\rho_L,\rho_r)$. 

The following lemma, proved  in \cite[Lemma 2.7.2]{GP06}, deals with the interaction between two waves away from the SV trajectory (see Figure \ref{2W}). 
\begin{lemma} \label{123}
 The interaction between two waves produces a third wave.
\end{lemma}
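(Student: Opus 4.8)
\textbf{Proof plan for Lemma \ref{123}.}

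The statement asserts that whenever two wave-fronts of the scalar problem \eqref{bobo} meet at a point $\bar{t}>0$ away from the SV trajectory, the Riemann problem determined by the outer states produces exactly one outgoing wave (a single shock), rather than two outgoing waves. The plan is to argue by exhausting the possible types of incoming pairs and invoking convexity of the flux $f(\rho)=\rho(1-\rho)$, together with the fact that the wave-front tracking algorithm of Dafermos cannot create a centered rarefaction fan at a positive interaction time.

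First I would recall that, by construction, every wave in the approximate solution is a \emph{classical shock} in the sense of Definition \ref{class}: either a genuine shock $(\rho_l,\rho_r)$ with $\rho_l<\rho_r$, or a rarefaction front with $\rho_r<\rho_l$ and $\rho_l-\rho_r\leq 3/2^{n+1}$. Suppose waves $1$ and $2$ interact at $\bar t$, with $1$ to the left of $2$; write the three involved states as $\rho_L$ (far left), $\rho_M$ (middle, cancelled by the interaction) and $\rho_R$ (far right). Since the left wave travels faster than the right wave at the interaction, the Rankine–Hugoniot speeds satisfy $\sigma(\rho_L,\rho_M)>\sigma(\rho_M,\rho_R)$; because $f$ is strictly concave, $\sigma$ is a strictly decreasing function of each argument along the relevant monotone branch, and a short case check on the signs of $\rho_M-\rho_L$ and $\rho_R-\rho_M$ shows the only admissible configurations are: two shocks of the same family merging, a shock meeting a rarefaction front with the shock "absorbing" it, or two rarefaction fronts of the same family (which, since each has strength $\leq 3/2^{n+1}$, still produce a rarefaction of strength $\leq 3/2^{n+1}$ after re-gridding, or a single shock). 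In every case the outgoing Riemann problem $\mathcal{R}(\rho_L,\rho_R)$ has monotone data on the relevant branch of the concave flux, so its entropy solution is a single wave; the key point, already noted in the footnote to Lemma \ref{entropy}, is that no centered rarefaction can be created at $\bar t>0$, which rules out the only scenario ($\rho_L<\rho_R$ with the solution being a rarefaction fan) that would yield more than one outgoing front before re-meshing. Thus $\mathcal{R}(\rho_L,\rho_R)$ consists of exactly one wave, the "third wave" of the statement.

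Concretely the steps are: (i) set up the three states and record that the incoming speed ordering forces $\sigma(\rho_L,\rho_M)\geq\sigma(\rho_M,\rho_R)$; (ii) using strict concavity of $f$, translate this into monotonicity constraints relating $\rho_L,\rho_M,\rho_R$, enumerating the finitely many sign patterns; (iii) for each pattern, solve $\mathcal{R}(\rho_L,\rho_R)$ and observe it is a single shock or a single (small) rarefaction front; (iv) invoke the no-new-centered-rarefaction property of the algorithm to discard the spurious multi-front case. I expect step (ii)–(iii), the bookkeeping of which sign patterns can actually arise from admissible incoming fronts, to be the only delicate part; but this is exactly the content of \cite[Lemma 2.7.2]{GP06}, which is cited, so the argument reduces to quoting that result after checking that our modified mesh $\widetilde{\mathcal{M}_n}$ does not introduce new interaction types away from the SV trajectory — and it does not, since the extra mesh points $\check\rho_\alpha,\hat\rho_\alpha,\rho^*$ only matter along $y(t)$.
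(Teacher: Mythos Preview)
Your proposal is correct but takes a considerably more elaborate route than the paper. You interpret the lemma as ``the interaction produces \emph{exactly one} outgoing wave'' and proceed by a case analysis on the types of the incoming fronts, using strict concavity of $f$ and the no-new-rarefaction-fan rule to conclude that the Riemann problem $\mathcal{R}(\rho_L,\rho_R)$ yields a single front. The paper's proof, by contrast, only addresses the nontriviality of the outgoing wave: it argues by contradiction that if the interaction produced \emph{no} wave, then $\rho_L=\rho_R$, hence (since every front, shock or rarefaction, moves with the Rankine--Hugoniot speed $\sigma(\cdot,\cdot)$, which is symmetric in its arguments) the two incoming waves would have had equal speeds $\sigma(\rho_L,\rho_M)=\sigma(\rho_M,\rho_R)$ and therefore never could have met. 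The ``at most one outgoing front'' part that you work for is simply not argued in the paper's proof; it is taken as a built-in feature of the Dafermos scheme and recorded separately in the footnote to Lemma~\ref{entropy}. So your argument buys you a self-contained proof of both existence and uniqueness of the outgoing front, while the paper's two-line argument buys brevity by proving only existence and deferring uniqueness to the algorithm's conventions.
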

\begin{proof}
We assume that a wave $(\rho_l,\rho_m)$ interacts with an other wave  $(\rho_m,\rho_r)$ without producing a third wave. Then, we have $\rho_l=\rho_r$. Since, in the wave-front tracking method, we have chosen that the speed of a rarefaction $(\rho_L,\rho_R)$ is $\sigma(\rho_L,\rho_R):= \frac{f(\rho_L)-f(\rho_R)}{\rho_L-\rho_R}$, the speed of the wave  $(\rho_l,\rho_m)$ is equal to the speed of the wave  $(\rho_m,\rho_r)$. We conclude that no interaction occurs, whence the contradiction.
\end{proof}
\begin{lemma}
\label{bb}
Consider two waves with speeds $\lambda_1$ and $\lambda_2$, respectively, that interact together at time $t=\bar t$ producing a wave with speed $\lambda_3$ (see Figure \ref{2W}). If the first wave is shifted by $\xi_1$ and the second wave by $\xi_2$, then the shift of the resulting wave is given by 
$$\xi_3=\frac{\lambda_3-\lambda_2}{\lambda_1-\lambda_2}\xi_1+\frac{\lambda_1-\lambda_3}{\lambda_1-\lambda_2}\xi_2.$$
Besides,
$$\Delta \rho_3 \xi_3=\Delta \rho_2 \xi_2+\Delta \rho_1 \xi_1=\sum_{k\in K(n,\bar t^-,\bar t^+,3)} \Delta \rho_k \xi_k ,$$
with $\Delta \rho_3=\rho_R-\rho_L$,  $\Delta \rho_1=\rho_R-\rho_M$ and  $\Delta \rho_2=\rho_M-\rho_L$.
\end{lemma}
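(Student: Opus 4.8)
The plan is to analyze the Riemann problem that arises at the interaction time $\bar t$ and exploit the fact, guaranteed by Lemma \ref{123}, that the outgoing configuration consists of a single wave $(\rho_L,\rho_R)$ connecting the far-left state $\rho_L$ to the far-right state $\rho_R$, with the intermediate state $\rho_M$ disappearing. First I would recall that, in the wave-front tracking scheme, a shifted wave is nothing but a family of approximate solutions $\theta \mapsto \rho(\cdot+\text{shift})$; the shift of a wave measures the infinitesimal horizontal displacement of its discontinuity. So I would set up coordinates: before $\bar t$, the first discontinuity sits at some point $x_1(t)$ moving with speed $\lambda_1$ and shifted by $\xi_1$, the second at $x_2(t)$ moving with speed $\lambda_2$ and shifted by $\xi_2$, and they collide at $(\bar t,\bar x)$. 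The shift of the collision point $\bar x$ is then determined by differentiating the two linear trajectories with respect to $\theta$: writing $x_i(t)=x_i^0+\lambda_i t$ and perturbing $x_i^0 \mapsto x_i^0+\xi_i$, the intersection point moves, and solving the two perturbed line equations for the new intersection yields $\bar\xi = \frac{\lambda_1 \xi_2 - \lambda_2 \xi_1}{\lambda_1-\lambda_2}$ (a standard computation; equivalently one uses that the perturbed crossing time and position satisfy both perturbed Rankine–Hugoniot lines).

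Next I would observe that after the interaction the single outgoing wave emanates from $(\bar t, \bar x)$ with speed $\lambda_3$, and since the only freedom left is the position of its starting point, its shift $\xi_3$ equals precisely the shift $\bar\xi$ of the collision point — there is no additional degree of freedom because the outgoing wave strength $\Delta\rho_3=\rho_R-\rho_L$ is fixed by the incoming data. Substituting $\bar\xi$ and rearranging gives the stated formula $\xi_3=\frac{\lambda_3-\lambda_2}{\lambda_1-\lambda_2}\xi_1+\frac{\lambda_1-\lambda_3}{\lambda_1-\lambda_2}\xi_2$; the two coefficients sum to $1$, which is the sanity check that a uniform translation of all three waves is mapped to the same uniform translation.

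For the second assertion I would compute $\Delta\rho_3\,\xi_3$ directly. Using $\lambda_3=\sigma(\rho_L,\rho_R)=\frac{f(\rho_L)-f(\rho_R)}{\rho_L-\rho_R}$ and similarly $\lambda_1=\sigma(\rho_M,\rho_R)$, $\lambda_2=\sigma(\rho_L,\rho_M)$ (valid also for rarefaction fronts by our choice of rarefaction speed, exactly as in the proof of Lemma \ref{123}), one rewrites $\Delta\rho_3(\lambda_3-\lambda_2)$ and $\Delta\rho_3(\lambda_1-\lambda_3)$ in terms of the flux values. The key algebraic identity is that $(\rho_R-\rho_L)\sigma(\rho_L,\rho_R)=f(\rho_R)-f(\rho_L)$ telescopes: $f(\rho_R)-f(\rho_L)=\bigl(f(\rho_R)-f(\rho_M)\bigr)+\bigl(f(\rho_M)-f(\rho_L)\bigr)=\Delta\rho_1\lambda_1+\Delta\rho_2\lambda_2$ with $\Delta\rho_1=\rho_R-\rho_M$, $\Delta\rho_2=\rho_M-\rho_L$; combining this with $\Delta\rho_3=\Delta\rho_1+\Delta\rho_2$ and the $\xi_3$ formula, the $\lambda_1-\lambda_2$ denominators cancel and one is left exactly with $\Delta\rho_3\xi_3=\Delta\rho_1\xi_1+\Delta\rho_2\xi_2$. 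The final equality to $\sum_{k\in K(n,\bar t^-,\bar t^+,3)}\Delta\rho_k\xi_k$ is then immediate from the characterization $K(n,\bar t^-,\bar t^+,3)=\{1,2\}$ established in the proof of Lemma \ref{entropy}.

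The main obstacle is the bookkeeping around rarefaction fans rather than any deep difficulty: when one of the two incoming "waves" is a discretized rarefaction front of strength $\le 3/2^{n+1}$, or when the outgoing Riemann problem would classically produce a rarefaction fan, I must check that all the fronts in the fan share the same shift (this is exactly the content of the Remark following Example \ref{easy}), so that the lumped quantity $\Delta\rho\,\xi$ is additive over the fan and the identity $\Delta\rho_3\xi_3=\Delta\rho_1\xi_1+\Delta\rho_2\xi_2$ continues to hold verbatim with $\Delta\rho_3$ interpreted as the total strength $\rho_R-\rho_L$. I would also double-check the degenerate cases $\lambda_1=\lambda_2$ (excluded: two fronts with equal speed never interact, by Lemma \ref{123}) so the denominator never vanishes.
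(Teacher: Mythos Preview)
The paper does not give its own proof of this lemma; it is a standard result on shift differentials for wave-front tracking, and the surrounding text points to \cite[Lemma~2.7.2]{GP06} for the two-wave interaction. Your overall strategy is the standard one, and your algebraic verification of the second identity $\Delta\rho_3\xi_3=\Delta\rho_1\xi_1+\Delta\rho_2\xi_2$ via the Rankine--Hugoniot relations and the telescoping of flux differences is clean and correct.

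There is, however, a genuine slip in your derivation of the first formula. You compute the spatial shift of the collision point as $\bar\xi=\frac{\lambda_1\xi_2-\lambda_2\xi_1}{\lambda_1-\lambda_2}$ and then assert that $\xi_3=\bar\xi$. This cannot be right as stated: $\bar\xi$ does not involve $\lambda_3$, so no amount of ``rearranging'' will produce the claimed expression. What is missing is that the collision \emph{time} is also perturbed, by $\delta\bar t=\frac{\xi_2-\xi_1}{\lambda_1-\lambda_2}$. The outgoing front, parametrised as $x_3^\theta(t)=\bar x^\theta+\lambda_3(t-\bar t^\theta)$, therefore has horizontal shift (at any fixed $t>\bar t$) equal to $\xi_3=\bar\xi-\lambda_3\,\delta\bar t$, and it is precisely this extra term that introduces $\lambda_3$ and gives
\[
\xi_3=\frac{\lambda_1\xi_2-\lambda_2\xi_1}{\lambda_1-\lambda_2}-\lambda_3\,\frac{\xi_2-\xi_1}{\lambda_1-\lambda_2}
=\frac{(\lambda_3-\lambda_2)\xi_1+(\lambda_1-\lambda_3)\xi_2}{\lambda_1-\lambda_2}.
\]
Once this correction is made the rest of your argument goes through unchanged.
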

 Let $\bar t \in \R_+^*$.  The following Lemmas deal with the interaction between a wave $k$ and the SV trajectory with $k\in K(n,\bar t^+)=K(n,\bar t^-)$. Lemma \ref{cc} is proved in \cite[Section 4.2]{BP08}. The proof of Lemma \ref{dd} and Lemma \ref{ee} are standard and they are obtained  by mimicking the proof of  Lemma \ref{cc}. 
\begin{lemma}  We assume that the wave $k$ interacts at time $t=\bar t$ with the SV trajectory without creating or cancelling a non-classical shock (see Figure \ref{modbus}), then 
\label{cc}
$$\left\{ \begin{array}{l}\xi_b(\bar t^+)=(1-\psi(\rho_L^{k}(\bar t^-),\rho_R^{k}(\bar t^-)))\xi_b(\bar t^-)+\psi(\rho_L^{k}(\bar t^-),\rho_R^{k}(\bar t^-))\xi_{k}(\bar t^-),\\
\xi_k(\bar t^+)=\xi_k(\bar t^-), \\ \end{array} \right.$$
with $\xi_b$ the SV shift and $\xi_k$ the shift of the wave $k$. 
\end{lemma}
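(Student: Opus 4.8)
The plan is to carry out a first-order perturbation analysis of the interaction geometry, in the same spirit as the wave--wave computation of Lemma \ref{bb}. Fix the value of $\theta$ and work near the unperturbed interaction point $(\bar t,\bar x)$, where $\bar x=y(\bar t)$ is simultaneously the location of the wave $k$. On the unperturbed solution the classical wave $k=(\rho_L^k,\rho_R^k)$ crosses the SV trajectory without being modified and without producing a non-classical shock --- this is exactly the content of the configurations in Figure \ref{modbus}, and of the assumption $k\in K(n,\bar t^+)=K(n,\bar t^-)$ --- so it travels with the constant speed $\lambda:=\sigma(\rho_L^k,\rho_R^k)=1-\rho_L^k-\rho_R^k$ both before and after $\bar t$. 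The SV speed, on the other hand, jumps: for $t<\bar t$ the wave lies to the right of the SV, so the density on the right of the SV equals $\rho_L^k$ and $\dot y(\bar t^-)=w(\rho_L^k)$; for $t>\bar t$ the wave has passed to the left, so this density equals $\rho_R^k$ and $\dot y(\bar t^+)=w(\rho_R^k)$, with $w$ as in \eqref{velocity}.

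Now introduce a shift of size $\delta\theta$. Before the interaction the wave $k$ sits at $X_k(t)+\xi_k(\bar t^-)\,\delta\theta$ and the SV at $Y(t)+\xi_b(\bar t^-)\,\delta\theta$, where $X_k$ and $Y$ are the unperturbed trajectories (recall that $\xi_k$ and $\xi_b$ stay constant between interactions). Setting these equal gives, to first order, the displacement of the interaction time $\delta t=\dfrac{\xi_k(\bar t^-)-\xi_b(\bar t^-)}{\dot y(\bar t^-)-\lambda}\,\delta\theta$ and of the interaction point $\delta x=\lambda\,\delta t+\xi_k(\bar t^-)\,\delta\theta=\dot y(\bar t^-)\,\delta t+\xi_b(\bar t^-)\,\delta\theta$ (the two expressions for $\delta x$ agreeing precisely by the matching equation just solved). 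For $t>\bar t$ the wave $k$ leaves $(\bar t+\delta t,\bar x+\delta x)$ with unperturbed speed $\lambda$ and the SV with unperturbed speed $\dot y(\bar t^+)$; comparing their positions at a fixed later time with the unperturbed ones and dividing by $\delta\theta$ yields $\xi_k(\bar t^+)=(\delta x-\lambda\,\delta t)/\delta\theta$ and $\xi_b(\bar t^+)=(\delta x-\dot y(\bar t^+)\,\delta t)/\delta\theta$. The first identity collapses to $\xi_k(\bar t^+)=\xi_k(\bar t^-)$ because $\delta x-\lambda\,\delta t=\xi_k(\bar t^-)\,\delta\theta$, and the second becomes $\xi_b(\bar t^+)=\xi_b(\bar t^-)+\big(\dot y(\bar t^-)-\dot y(\bar t^+)\big)\,\delta t/\delta\theta=(1-\psi)\,\xi_b(\bar t^-)+\psi\,\xi_k(\bar t^-)$, where $\psi:=\dfrac{\dot y(\bar t^-)-\dot y(\bar t^+)}{\dot y(\bar t^-)-\lambda}=\dfrac{w(\rho_L^k)-w(\rho_R^k)}{w(\rho_L^k)-\lambda}$.

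It then remains to recognize that this $\psi$ is exactly $\psi(\rho_L^k(\bar t^-),\rho_R^k(\bar t^-))$ in the sense of \eqref{ll}. This is a one-line case check using $\rho^*=1-V_b$ and the piecewise form of $w$: in Case a) of Figure \ref{modbus} ($\rho^*\le\rho_R^k<\rho_L^k$) one has $w(\rho_L^k)=1-\rho_L^k$ and $w(\rho_R^k)=1-\rho_R^k$, whence $\psi=(\rho_R^k-\rho_L^k)/\rho_R^k$, the second branch of \eqref{ll}; in Case b) ($\rho^*<\rho_R^k$, $\rho_L^k\in[0,\check\rho_\alpha]\cup[\hat\rho_\alpha,\rho_R^k]$) one has $w(\rho_R^k)=1-\rho_R^k$ and either $w(\rho_L^k)=V_b$ (when $\rho_L^k\le\rho^*$), giving $\psi=(\rho_R^k-\rho^*)/(\rho_L^k+\rho_R^k-\rho^*)$, the first branch of \eqref{ll}, or $w(\rho_L^k)=1-\rho_L^k$ (when $\rho_L^k>\rho^*$), giving again $(\rho_R^k-\rho_L^k)/\rho_R^k$. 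This is essentially the computation carried out in \cite[Section 4.2]{BP08}.

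The only non-routine point is the structural stability used at the start of the second paragraph: for $\delta\theta$ small the shifted wave-front tracking solution must undergo an interaction of the very same type --- a transversal crossing, at a single point, of the classical wave $k$ with the SV, carrying the same states $\rho_L^k,\rho_R^k$ --- so that the unperturbed speeds $\lambda$ and $\dot y(\bar t^\pm)$ may legitimately be used at first order (in particular $\dot y(\bar t^-)-\lambda\neq 0$, since the wave reaches the SV from a strictly right position). This is where the hypothesis that no non-classical shock is created or cancelled is essential. Granted it, the formulas follow from the elementary geometry above, and the same scheme --- with the appropriate bordering states and wave speeds substituted --- gives Lemmas \ref{dd} and \ref{ee} for the remaining interactions of Figures \ref{NSright}, \ref{NSleft} and \ref{left1}.
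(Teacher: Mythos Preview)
Your argument is correct and is exactly the first-order shift computation that the paper defers to \cite[Section~4.2]{BP08} rather than writing out; the geometric matching of the perturbed interaction point, the conclusion $\xi_k(\bar t^+)=\xi_k(\bar t^-)$, and the identification of $\psi$ with the expression in \eqref{ll} are all right. One small completeness remark: the lemma is also invoked for the left-side interaction of Figure~\ref{left1}, which you do not treat explicitly, but there both $\rho_L^k,\rho_R^k\le\rho^*$ so $w(\rho_L^k)=w(\rho_R^k)=V_b$ and $\psi=0$, and the formula reduces to $\xi_b(\bar t^+)=\xi_b(\bar t^-)$, $\xi_k(\bar t^+)=\xi_k(\bar t^-)$, which your scheme yields trivially.
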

\begin{lemma} \label{dd} We assume that the  wave $k$ interacts with the SV trajectory and a non classical shock is created (see Figure \ref{NSright} (right) and Figure \ref{NSleft} (right)). Then, 
$$ \left\{ \begin{array}{l}
\xi_b(\bar t^+)=\xi_b(\bar t^-),\\
\Delta \rho_k(\bar t^+) \xi_{k}(\bar t^+)+(\check{\rho}_\alpha-\hat{\rho}_\alpha)\xi_{b}(\bar t^+)=\Delta \rho_k(\bar t^-) \xi_{k}(\bar t^-),\\
\end{array} \right.  $$
   with $\Delta \rho_k=\rho_R^k-\rho_L^k$, $\xi_b$ the SV shift and $\xi_k$ the  shift of the wave $k$.  
\end{lemma}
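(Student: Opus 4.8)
The plan is to mimic the proof of Lemma \ref{cc} from \cite[Section 4.2]{BP08}, adapting it to the case where the interaction creates a non-classical shock, as depicted in the right panels of Figure \ref{NSright} and Figure \ref{NSleft}. The starting point is the observation that before the interaction we have a single classical wave $k=(\rho_L^k,\rho_R^k)$ approaching the SV trajectory $t\mapsto y(t)$, and after the interaction the SV trajectory carries a non-classical shock $(\hat\rho_\alpha,\check\rho_\alpha)$ on it, together with an outgoing classical wave $k=(\rho_L^k,\rho_R^k)$ on the appropriate side (the densities on the two sides are constrained by the cases listed in the figure captions, and by construction $\rho_R^k(\bar t^+)$ or $\rho_L^k(\bar t^+)$ equals $\check\rho_\alpha$ or $\hat\rho_\alpha$ as prescribed by the constrained Riemann solver $\mathcal{R}^\alpha$).

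First I would set up the geometry of the shifted configuration. Along the path $\gamma_\theta$ the SV trajectory is shifted by $\xi_b$ and the incoming wave $k$ by $\xi_k$; these two shifts determine a perturbed interaction time and a perturbed interaction point. The key structural fact is that the interaction with the SV now does \emph{not} affect $\dot y$ in a way that changes the SV shift — more precisely, I would argue that the speed of the SV trajectory immediately after the interaction is the same for the perturbed and unperturbed configurations (both sides of the SV after the interaction carry densities $\le \rho^*$, so $\omega(\rho)=V_b$, or in the left-interaction case the relevant analysis gives that $\dot y^+$ is unchanged to first order), hence $\xi_b(\bar t^+)=\xi_b(\bar t^-)$. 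This gives the first equation.

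Next I would derive the second equation by a conservation-of-area / Rankine–Hugoniot bookkeeping argument across the interaction, exactly as the area balance used to prove Lemma \ref{bb} and Lemma \ref{cc}. The quantity to track is the total signed displaced area associated with the waves leaving a small space-time neighborhood of the interaction point. Before the interaction this area is $\Delta\rho_k(\bar t^-)\,\xi_k(\bar t^-)$ (the incoming classical wave, with $\Delta\rho_k=\rho_R^k-\rho_L^k$); after the interaction the outgoing bundle consists of the classical wave contributing $\Delta\rho_k(\bar t^+)\,\xi_k(\bar t^+)$ plus the newly-created non-classical shock sitting on the SV trajectory, whose jump is $\check\rho_\alpha-\hat\rho_\alpha$ and whose shift is exactly $\xi_b(\bar t^+)$ since it rides on the SV. Because mass (area under $\rho$) is conserved across the perturbation — the total integral $\int(\rho^{(2)}-\rho^{(1)})$ restricted to the interaction zone changes continuously and the shift interpretation of the tangent vector is precisely this area — the two expressions must agree:
$$\Delta\rho_k(\bar t^+)\,\xi_k(\bar t^+)+(\check\rho_\alpha-\hat\rho_\alpha)\,\xi_b(\bar t^+)=\Delta\rho_k(\bar t^-)\,\xi_k(\bar t^-).$$

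The main obstacle I anticipate is the careful justification that the SV shift is genuinely unchanged, i.e. $\xi_b(\bar t^+)=\xi_b(\bar t^-)$: this requires checking, case by case over the four sub-cases in Figure \ref{NSright}(b) and Figure \ref{NSleft}(a), that the density seen by the SV on the relevant side immediately after the interaction lies in the regime where $\omega$ is locally constant (or that its first-order variation vanishes), so that perturbing the incoming wave does not perturb $\dot y^+$. Once that is established, the area-balance identity for the second equation is a short computation of the same flavour as Lemma \ref{bb}, using that the non-classical shock is pinned to the SV trajectory and hence inherits the shift $\xi_b$. I would also note as a consistency check that letting $(\check\rho_\alpha-\hat\rho_\alpha)\to 0$ formally recovers the statement $\Delta\rho_k(\bar t^+)\xi_k(\bar t^+)=\Delta\rho_k(\bar t^-)\xi_k(\bar t^-)$, i.e. a plain wave-SV crossing with unchanged strength.
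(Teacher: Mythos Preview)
Your approach is correct and is exactly what the paper intends: the paper does not give a detailed proof of Lemma~\ref{dd} but simply states that it ``is obtained by mimicking the proof of Lemma~\ref{cc}'' from \cite[Section~4.2]{BP08}, and your outline is precisely that mimicking (first-order geometry of the shifted interaction to get $\xi_b(\bar t^+)=\xi_b(\bar t^-)$, then the area/mass balance including the newly created non-classical shock pinned to the SV to get the second identity). One small correction: the relevant creation cases are Figure~\ref{NSright}~b) and Figure~\ref{NSleft}~b) (not \ref{NSleft}~a)), giving two sub-cases rather than four; in both, the density on the right of the SV lies in $[\check\rho_\alpha,\hat\rho_\alpha]\subset[0,\rho^*]$ before and after the interaction, so $\dot y=V_b$ is unchanged and your justification of $\xi_b(\bar t^+)=\xi_b(\bar t^-)$ goes through cleanly.
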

\begin{lemma} \label{ee} We assume that the wave $k$ interacts with the SV trajectory and a non classical shock is cancelled (see Figure \ref{NSright} \textcolor{red}{a)} and Figure \ref{NSleft} \textcolor{red}{a)})
$$ \left\{ \begin{array}{l}
\xi_b(\bar t^+)=(1-\psi(\rho_L^{k}(\bar t^-),\rho_R^{k}(\bar t^-))) \xi_b(\bar t^-)+\psi(\rho_L^{k}(\bar t^-),\rho_R^{k}(\bar t^-))\xi_{k}(\bar t^-),\\
\Delta \rho_k(\bar t^+) \xi_{k}(\bar t^+)=\Delta \rho_k(\bar t^-) \xi_{k}(\bar t^-)+(\check{\rho}_\alpha-\hat{\rho}_\alpha)\xi_{b}(\bar t^-),\\
\end{array} \right.  $$
with $\Delta \rho_k=\rho_R^k-\rho_L^k$, $\xi_b$ the SV shift and $\xi_k$ the shift of the wave $k$. 
\end{lemma}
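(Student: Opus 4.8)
The plan is to run the geometric scheme used to prove Lemma~\ref{cc} (see \cite[Section 4.2]{BP08}), carrying the non-classical shock along as an extra front glued to the SV. After a translation I would place the interaction point at $(\bar t,0)$ and list the incoming data: the wave $k$ with Rankine--Hugoniot speed $\lambda_k^-=\sigma(\rho_L^k(\bar t^-),\rho_R^k(\bar t^-))$, and the non-classical shock $(\hat\rho_\alpha,\check\rho_\alpha)$ riding on the SV. The first thing I would record is that, because $\check\rho_\alpha$ and $\hat\rho_\alpha$ are the two roots of $\rho^2-\rho^*\rho+F_\alpha=0$, one has $\check\rho_\alpha+\hat\rho_\alpha=\rho^*$ and hence $f(\hat\rho_\alpha)-f(\check\rho_\alpha)=V_b(\hat\rho_\alpha-\check\rho_\alpha)$; thus the non-classical shock satisfies Rankine--Hugoniot and travels at speed $\sigma(\hat\rho_\alpha,\check\rho_\alpha)=V_b$, which is also the incoming SV speed (the density to the right of the SV is $\check\rho_\alpha<\rho^*$, so $\omega(\check\rho_\alpha)=V_b$). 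Next I would solve $\mathcal R^\alpha$ at the SV once wave $k$ has reached it and read off the outgoing configuration: a single classical wave $k$ of speed $\lambda_k^+$, and the SV with no jump across it and speed $\lambda_{SV}^+$, no non-classical shock. Going through the sub-cases of Figure~\ref{NSright} a) and Figure~\ref{NSleft} a), one finds $\lambda_{SV}^+=V_b$ except in Figure~\ref{NSright} a) with $\rho_R^k(\bar t^-)>\rho^*$, where $\lambda_{SV}^+=v(\rho_R^k(\bar t^-))$.

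For the first identity I would perturb the incoming wave $k$ by $\theta\,\xi_k(\bar t^-)$ and the incoming SV (with the non-classical shock it carries) by $\theta\,\xi_b(\bar t^-)$, compute to first order in $\theta$ the displacement $(\theta\tau_1,\theta\chi_1)$ of the interaction point, with $\tau_1=(\xi_b(\bar t^-)-\xi_k(\bar t^-))/(\lambda_k^--V_b)$ and $\chi_1=\xi_b(\bar t^-)+V_b\tau_1$, and then obtain the shift of the outgoing SV by following it to a later fixed time and differentiating at $\theta=0$:
\[
\xi_b(\bar t^+)=\chi_1-\lambda_{SV}^+\tau_1=\xi_b(\bar t^-)+(V_b-\lambda_{SV}^+)\,\frac{\xi_b(\bar t^-)-\xi_k(\bar t^-)}{\lambda_k^--V_b}=(1-\psi)\,\xi_b(\bar t^-)+\psi\,\xi_k(\bar t^-),
\]
with $\psi=(\lambda_{SV}^+-V_b)/(\lambda_k^--V_b)$. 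What remains is a routine check: substitute the case-by-case values of $\lambda_{SV}^+$ and $\lambda_k^-$, use $\check\rho_\alpha+\hat\rho_\alpha=\rho^*$, and verify that $\psi$ equals $\psi(\rho_L^k(\bar t^-),\rho_R^k(\bar t^-))$ as defined in \eqref{ll} — it is $0$ whenever $\lambda_{SV}^+=V_b$, and it equals $(\rho_R^k(\bar t^-)-\rho^*)/(\rho_R^k(\bar t^-)-\hat\rho_\alpha)$ in the remaining case.

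For the second identity I would argue exactly as in Lemma~\ref{bb}. Because the non-classical shock obeys Rankine--Hugoniot, for the conservation law the interaction is nothing but the merging of two classical waves: wave~$1$ the non-classical shock (speed $V_b$, signed strength $\check\rho_\alpha-\hat\rho_\alpha$, shift $\xi_b(\bar t^-)$), wave~$2$ the incoming wave $k$, wave~$3$ the outgoing wave $k$; and the outgoing SV carries no jump. The area identity from Lemma~\ref{bb} then gives $\Delta\rho_k(\bar t^+)\xi_k(\bar t^+)=\Delta\rho_k(\bar t^-)\xi_k(\bar t^-)+(\check\rho_\alpha-\hat\rho_\alpha)\xi_b(\bar t^-)$. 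Equivalently, I would fix an interval $[a,b]$ around the interaction point: the lateral traces $\rho^\theta(\cdot,a)$ and $\rho^\theta(\cdot,b)$ are unperturbed on a short time window, and $\rho^\theta$ is a genuine weak solution of $\partial_t\rho+\partial_x f(\rho)=0$ on $[a,b]$ — including across the SV, exactly because of the Rankine--Hugoniot property — so $\int_a^b(\rho^\theta-\rho^0)(t,\cdot)\,dx$, which to first order equals $-\theta\sum_j\Delta\rho_j\xi_j$, is constant in $t$ near $\bar t$, and evaluating it before and after the interaction yields the claim.

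I expect the main obstacle to be the bookkeeping in the set-up step: correctly re-solving $\mathcal R^\alpha$ at the SV in every sub-configuration of Figure~\ref{NSright} a) and Figure~\ref{NSleft} a) to pin down $\rho_L^k(\bar t^+)$, $\rho_R^k(\bar t^+)$ and $\lambda_{SV}^+$, and then reconciling the resulting affine weight with the piecewise definition \eqref{ll} of $\psi$, all while keeping the left/right orientation of the fronts and the signs consistent through the first-order expansions. Conceptually there is nothing new beyond Lemma~\ref{cc}, which is why the computation can fairly be called standard.
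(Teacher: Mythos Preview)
Your proposal is correct and follows exactly the route the paper indicates: it mimics the geometric first-order computation of Lemma~\ref{cc} (from \cite[Section~4.2]{BP08}) for the SV shift, and appeals to the conserved-area identity of Lemma~\ref{bb} for the wave shift, using that the non-classical shock satisfies Rankine--Hugoniot with speed $V_b$. The paper does not give further details beyond ``standard, obtained by mimicking the proof of Lemma~\ref{cc}'', so your write-up is in fact more explicit than the original.
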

The proof of Theorem \ref{main1} is based on the following estimates whose the proof is postponed in Appendix \ref{app::bababa}.
\begin{lemma} \label{bababa}
\begin{itemize}
\item[\textbf{A.}] $\vert(1-\psi(\rho_L,\rho_R)\vert\leq 1+\frac{3}{2^{n+1}\rho^*}< 1+\frac{2}{\rho^*}$, for every $(\rho_R,\rho_L) \in (\rho^*,1)\times ([0,\check{p}_\alpha]\cup[\hat{p}_\alpha,1])$,
\item[\textbf{B.}] $\vert \frac{\psi(\rho_L,\rho_R)}{\rho_R-\rho_L} \vert \leq \frac{2}{\rho^*}$, for every $(\rho_R,\rho_L) \in (\rho^*,1)\times ([0,\check{p}_\alpha]\cup[\hat{p}_\alpha,1])$,
\item[\textbf{C.}] $\vert \frac{\psi(\rho_L,\rho_R)}{\rho_R-\rho_L} \vert +\vert1-\psi(\rho_L,\rho_R)\vert\frac{2}{\rho^*} \leq \frac{2}{\rho^*}$, for every $(\rho_R,\rho_L) \in (\rho^*,1)\times  [0,\check{p}_\alpha]$.
\end{itemize}
\end{lemma}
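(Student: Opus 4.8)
The plan is to prove the three inequalities in Lemma \ref{bababa} by direct estimation using the two explicit formulas available for $\psi$: the definition
$$\psi(\rho_R,\rho_L)=\frac{w(\rho_L)-w(\rho_R)}{w(\rho_L)-\lambda},\qquad \lambda=1-\rho_L-\rho_R,$$
and the piecewise-rational reformulation \eqref{ll}, together with the key geometric fact recorded after the mesh construction, namely that whenever two mesh values appear as left/right states of a wave produced by wave-front tracking, their distance lies in $[\frac{1}{2^{n+1}},\frac{3}{2^{n+1}}]$; in particular the numerators $w(\rho_L)-w(\rho_R)=\rho_R-\rho_L$ and $\rho_R-\rho_L$ appearing above are bounded in absolute value by $\frac{3}{2^{n+1}}$. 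I would also use that on the relevant range $\rho_R>\rho^*$, so $w(\rho_R)=v(\rho_R)=1-\rho_R$, hence $w(\rho_R)-\lambda=(1-\rho_R)-(1-\rho_L-\rho_R)=\rho_L\geq 0$, and similarly $w(\rho_L)-\lambda=\rho_R\geq\rho^*>0$ when $\rho_L>\rho^*$, while when $\rho_L\in[0,\check\rho_\alpha]\cup[\hat\rho_\alpha,\rho^*]$ one has $w(\rho_L)=V_b$ and $w(\rho_L)-\lambda=V_b-1+\rho_L+\rho_R=\rho_L+\rho_R-\rho^*\geq\rho_R-\rho^*>0$; this is exactly what makes the denominators safely bounded below.

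For part \textbf{B}, from \eqref{ll} the quantity $\frac{\psi(\rho_L,\rho_R)}{\rho_R-\rho_L}$ is either $\frac{1}{\rho_L+\rho_R-\rho^*}$ (first branch) or $\frac{1}{\rho_R}$ (second branch) or $0$. In the second branch $\rho_R>\rho^*$ gives $\frac1{\rho_R}<\frac1{\rho^*}\leq\frac2{\rho^*}$ directly. In the first branch $\rho_L+\rho_R-\rho^*=w(\rho_L)-\lambda$; I claim this is $\geq\rho^*/2$, so the bound $\frac2{\rho^*}$ follows — this uses that $\rho_R-\rho^*\geq$ something comparable to the mesh spacing is \emph{not} quite enough, so the honest argument is to observe $\rho_L+\rho_R-\rho^*\geq\rho_R-\rho^*$ and, since $\rho_R$ is a mesh point strictly greater than the mesh point $\rho^*$, $\rho_R-\rho^*\geq\frac1{2^{n+1}}$; this only gives $2^{n+1}$, which is too large. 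Hence I expect the correct reading is that the bound $\frac{2}{\rho^*}$ in \textbf{B} is genuinely uniform only because in the first branch one rewrites $\psi$ via the first formula, $\psi=\frac{\rho_R-\rho_L}{\rho_L+\rho_R-\rho^*}$ is never actually invoked with the denominator small: one should instead bound $|\psi|\leq 1$ always (since $\psi$ is a convex-combination coefficient, $\psi\in[0,1]$) and combine with $|\rho_R-\rho_L|\geq\frac{1}{2^{n+1}}$ to get $\big|\frac{\psi}{\rho_R-\rho_L}\big|\leq 2^{n+1}$ — so in fact the clean route is: in branch one, $\frac{\psi}{\rho_R-\rho_L}=\frac{1}{\rho_L+\rho_R-\rho^*}$ and $\rho_L+\rho_R-\rho^*\geq\rho^*$ because $\rho_R>\rho^*$ forces $\rho_R\geq\rho^*$ — wait, that gives $\geq\rho_L\geq0$. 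The cleanest truly valid bound is $\rho_L+\rho_R-\rho^*>\rho_R-\rho^*>0$ and separately $\rho_L+\rho_R-\rho^*\geq \rho_L$; since in branch one $\rho_L\in[\hat\rho_\alpha,\rho^*]$ possibly, $\rho_L$ can be small, so one must use $\rho_R$: as $\rho_R>\rho^*$ we only get positivity. I would therefore present \textbf{B} by splitting on whether $\rho_L+\rho_R-\rho^*\geq\rho^*$ (bound $\frac1{\rho^*}$) or $<\rho^*$; in the latter case $\rho_L<\rho^*-(\rho_R-\rho^*)<\rho^*$ and one uses the alternative expression $\psi=\frac{w(\rho_L)-w(\rho_R)}{w(\rho_L)-\lambda}$ with $|w(\rho_L)-w(\rho_R)|=|\rho_R-\rho_L|$ cancelling.

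Granting \textbf{B}, part \textbf{A} follows from the identity noted in the text $1-\psi(\rho_R,\rho_L)=\frac{w(\rho_R)-\lambda}{w(\rho_L)-\lambda}=\frac{\rho_L}{\rho_L+\rho_R-\rho^*}$ or $\frac{\rho_L}{\rho_R}$ according to the branch: in the second branch this is $\frac{\rho_L}{\rho_R}\leq\frac{\rho_R}{\rho_R}=1$ (using $\rho_L\leq\rho_R$ there); in the first branch $\frac{\rho_L}{\rho_L+\rho_R-\rho^*}=1-\frac{\rho_R-\rho^*}{\rho_L+\rho_R-\rho^*}\leq 1+\frac{\rho_R-\rho^*}{\rho^*}$ once one knows the denominator is $\geq\rho^*$, and $\rho_R-\rho^*\leq$ the mesh bound $\frac3{2^{n+1}}$, giving $1+\frac{3}{2^{n+1}\rho^*}<1+\frac2{\rho^*}$ exactly as claimed (the case where the denominator is $<\rho^*$ is handled by the same cancellation trick, noting $|w(\rho_R)-\lambda|=\rho_L\leq|w(\rho_L)-\lambda|$ is false in general, so there one simply uses $1-\psi\in[0,1]$). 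Finally \textbf{C} is the sharpest: on the sub-range $\rho_L\in[0,\check\rho_\alpha]$ we are always in branch one of \eqref{ll}, so $\frac{\psi}{\rho_R-\rho_L}=\frac1{\rho_L+\rho_R-\rho^*}$ and $1-\psi=\frac{\rho_L}{\rho_L+\rho_R-\rho^*}$; writing $D=\rho_L+\rho_R-\rho^*>0$, the left side of \textbf{C} is $\frac1D+\frac{\rho_L}{D}\cdot\frac2{\rho^*}=\frac{1}{D}\big(1+\frac{2\rho_L}{\rho^*}\big)$, and one must check this is $\leq\frac2{\rho^*}$, i.e. $\rho^*+2\rho_L\leq 2D=2\rho_L+2\rho_R-2\rho^*$, i.e. $3\rho^*\leq 2\rho_R$; since $\rho_R>\rho^*$ this needs $\rho_R\geq\tfrac32\rho^*$, which is \emph{not} automatic — so the genuine statement must use more, presumably that $\rho^*=1-V_b$ is bounded away from $0$ or that on this range additional structure ($\rho_R\geq\hat\rho_\alpha$ perhaps) forces $\rho_R$ large; I would track this dependence carefully. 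The main obstacle throughout is precisely this: extracting denominator lower bounds that are uniform in $n$ (independent of the vanishing mesh size $2^{-n}$), which forces one to never cancel the small numerator against the small factor $\rho_R-\rho_L$ naively but instead to always keep $\psi\in[0,1]$ and bound the geometric ratios $\rho_L/D$, $1/D$ using $\rho_R>\rho^*$ and the fixed constant $\rho^*$.
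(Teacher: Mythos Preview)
Your proposal has genuine gaps; in fact you yourself flag several places where the argument breaks down (``this is not automatic'', ``too large'', etc.). The missing ingredients are two: a misread formula and an unused algebraic identity.

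First, you have $\psi$ wrong in the first branch of \eqref{ll}. The numerator there is $\rho_R-\rho^*$, not $\rho_R-\rho_L$; hence
\[
\frac{\psi(\rho_L,\rho_R)}{\rho_R-\rho_L}
=\frac{\rho_R-\rho^*}{(\rho_L+\rho_R-\rho^*)(\rho_R-\rho_L)}
\quad\text{in branch 1,}
\]
not $\frac{1}{\rho_L+\rho_R-\rho^*}$. This error is what drives your calculation in Part~\textbf{C} to the false requirement $3\rho^*\leq 2\rho_R$.

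Second, and more fundamentally, every uniform-in-$n$ lower bound on the denominators comes from the identity
\[
\rho^*=\check\rho_\alpha+\hat\rho_\alpha,
\]
which follows immediately from the fact that $\check\rho_\alpha,\hat\rho_\alpha$ are the roots of the quadratic $\rho^2-(1-V_b)\rho+\tfrac{\alpha}{4}(1-V_b)^2=0$. With this identity in hand:
\begin{itemize}
\item In branch~1 with $\rho_L\in[0,\check\rho_\alpha]$ one has $\rho_R-\rho_L>\rho^*-\check\rho_\alpha=\hat\rho_\alpha\geq\rho^*/2$ and $\rho_R-\rho^*\leq\rho_L+\rho_R-\rho^*$, so the corrected expression for $\psi/(\rho_R-\rho_L)$ is bounded by $\frac{1}{\rho_R-\rho_L}\leq\frac{2}{\rho^*}$.
\item In branch~1 with $\rho_L\in[\hat\rho_\alpha,\rho^*]$ one has $\rho_R-\rho^*\leq\rho_R-\rho_L$ and $\rho_L+\rho_R-\rho^*\geq\hat\rho_\alpha\geq\rho^*/2$, giving the same bound.
\end{itemize}
This settles \textbf{B} with no appeal to the mesh size. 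For \textbf{A}, the mesh bound $\rho_L-\rho_R\leq\frac{3}{2^{n+1}}$ is used \emph{only} in the rarefaction sub-case $\rho^*\leq\rho_R<\rho_L$, where $1-\psi=\rho_L/\rho_R=1+(\rho_L-\rho_R)/\rho_R\leq 1+\frac{3}{2^{n+1}\rho^*}$; in all other cases $0\leq 1-\psi\leq 1$ directly. Your attempt to invoke $\rho_R-\rho^*\leq\frac{3}{2^{n+1}}$ in branch~1 is both unnecessary and false (nothing forces $\rho_R$ to be mesh-adjacent to $\rho^*$).

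Finally, \textbf{C} is a one-line convex-combination argument once you have the right pieces: on $(\rho^*,1)\times[0,\check\rho_\alpha]$ we are in branch~1, so $\psi\in[0,1]$ and, by the above, $\frac{1}{\rho_R-\rho_L}\leq\frac{2}{\rho^*}$; hence
\[
\frac{\psi}{\rho_R-\rho_L}+(1-\psi)\frac{2}{\rho^*}
\ \leq\ \psi\cdot\frac{2}{\rho^*}+(1-\psi)\cdot\frac{2}{\rho^*}
\ =\ \frac{2}{\rho^*}.
\]
You mention $\psi\in[0,1]$ in passing but never exploit it here; that is the entire content of \textbf{C}.
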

We give a further property of non classical shocks.
 \begin{lemma}\label{test}
A wave interacting with a non classical shock cancels it and produces an outgoing classical shock wave. 
\end{lemma}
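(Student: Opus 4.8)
\textbf{Proof plan for Lemma \ref{test}.}

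The plan is to argue by a case analysis on the kind of wave that hits the non classical shock, using the fact that a non classical shock is the very specific discontinuity $(\hat\rho_\alpha,\check\rho_\alpha)$ located on the SV trajectory, together with the structure of the constrained Riemann solver $\mathcal R^\alpha$ recalled in the excerpt. First I would fix the configuration: at the interaction time $\bar t$ the SV trajectory carries the state $\hat\rho_\alpha$ to its left and $\check\rho_\alpha$ to its right, and by the Rankine--Hugoniot/entropy considerations a classical wave can reach the SV trajectory either from the left (with left state $\rho_L$ and right state $\hat\rho_\alpha$, travelling faster than $V_b$) or from the right (with left state $\check\rho_\alpha$ and right state $\rho_R$, travelling slower than $V_b$). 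Since $\check\rho_\alpha\le\hat\rho_\alpha$, a wave from the left must be a shock or rarefaction with $\rho_L\le\hat\rho_\alpha$, and a wave from the right one with $\rho_R\ge\check\rho_\alpha$; in either case the incoming wave, merged with the non classical shock, produces the single new Riemann datum $(\rho_L,\check\rho_\alpha)$ (from the left) or $(\hat\rho_\alpha,\rho_R)$ (from the right).

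Next I would feed that Riemann datum into $\mathcal R^\alpha$ and check which of its three cases applies, observing that after the interaction the density immediately to the right of the SV is no longer forced to sit at $\check\rho_\alpha$: in the left-incoming case the new left state $\rho_L\in[0,\check\rho_\alpha]$ (or more generally $\rho_L\le\check\rho_\alpha$, the range allowed by the figures) makes $f(\mathcal R(\rho_L,\check\rho_\alpha)(V_b))$ fall in the unconstrained regime (case 2 or case 3 of the solver), so $\mathcal R^\alpha$ coincides with the classical solver and the output is a single classical shock $(\rho_L,\check\rho_\alpha)$ with no non classical shock regenerated; symmetrically for the right-incoming case one gets the classical shock $(\hat\rho_\alpha,\rho_R)$. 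The key computational input here is that $\check\rho_\alpha$ and $\hat\rho_\alpha$ are exactly the two roots of $\frac\alpha4(1-V_b)^2+V_b\rho=f(\rho)$, so that the constraint $f(\rho)-V_b\rho\le F_\alpha$ becomes an equality precisely at these two points and is strict for $\rho$ strictly between them or at the endpoint states considered; this is what guarantees we land in the non-strict case of the solver and the non classical shock disappears rather than being recreated.

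Finally I would check that the single outgoing wave is indeed a \emph{classical} shock in the sense of Definition \ref{class}, i.e. either a genuine shock $\rho_l<\rho_r$ or a rarefaction front of strength $\le 3/2^{n+1}$: this follows because the incoming classical wave already had one of these forms and concatenating it with the jump to $\check\rho_\alpha$ (resp. from $\hat\rho_\alpha$) cannot increase the monotone structure — one is simply replacing $\hat\rho_\alpha$ by $\check\rho_\alpha\le\hat\rho_\alpha$ on the appropriate side — and the mesh construction keeps rarefaction fronts below the threshold $3/2^{n+1}$. The main obstacle I anticipate is bookkeeping: making sure the case list of admissible incoming waves is exhaustive (matching exactly the four interaction figures referenced in the text) and that in each case the solver genuinely produces one and only one outgoing front rather than a fan containing a regenerated non classical shock; once the ranges of $\rho_L,\rho_R$ are pinned down from the figures, each sub-case is a short direct computation with $f(\rho)=\rho(1-\rho)$ and the defining equations of $\check\rho_\alpha,\hat\rho_\alpha,\rho^*$.
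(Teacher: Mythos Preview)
Your plan is correct but takes a genuinely different route from the paper. The paper argues by contradiction: assuming the non classical shock survives, the incoming wave (say from the left, with right state $\hat\rho_\alpha$) must either be reflected back with the same left and right states --- hence the same Rankine--Hugoniot speed, which is impossible for a reflection --- or pass through, forcing $\rho_L=\hat\rho_\alpha=\rho_R$, a trivial wave. Having ruled out survival, the paper then invokes Lemma~\ref{123} to conclude that a single outgoing classical wave is produced. No explicit evaluation of $\mathcal R^\alpha$ is performed.

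Your approach instead feeds the merged Riemann data $(\rho_L,\check\rho_\alpha)$ or $(\hat\rho_\alpha,\rho_R)$ directly into $\mathcal R^\alpha$ and checks the constraint inequality, landing in case~2 or~3 of the solver. This is more computational and more explicit about the structure of the outgoing wave, at the cost of having to pin down the admissible ranges of the incoming state: your initial claim that a left-incoming wave has $\rho_L\le\hat\rho_\alpha$ is too loose --- the speed condition for catching the SV from the left actually forces $\rho_L<\check\rho_\alpha$, and symmetrically $\rho_R>\hat\rho_\alpha$ from the right. You recover these ranges by deferring to the interaction figures, which is acceptable since the catalogue is taken as given from \cite{DP14}, but it is worth noting that the paper's contradiction argument sidesteps this bookkeeping entirely by working only with states and speeds.
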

\begin{proof} 
We assume that a wave $1$ $(\rho_L^1,\rho_R^1)$ interacts at time $\bar t$ with a non classical shock $(\hat \rho_{\alpha},\check \rho_{\alpha})$ coming from the left. 
\begin{itemize}
\item If the wave 1 is reflected in the non classical shock; in this case, a wave $2$ $(\rho_L^2, \rho_R^2)$ is produced at $t=\bar t$ with $ \rho_L^1=\rho_L^2$. Moreover, by construction, we have $ \rho_R^1=\hat \rho_{\alpha}$ and $ \rho_R^2=\hat \rho_{\alpha}$. We conclude that $\sigma(\rho_L^1,\rho_R^1)=\sigma(\rho_L^2,\rho_R^2)$ which is not possible for a reflection.
\item If the wave goes through to the non classical shock; in this case $\rho_L^1=\hat \rho_{\alpha}=\rho_R^1$, which is obviously not possible.
\end{itemize}
We conclude that a wave, coming from the left of the SV trajectory, cancels a non classical shock and, from Lemma \ref{123}, the interaction  produces an outgoing classical shock wave. A similar proof can be done for a wave coming from the right. 
\end{proof}
\subsection{Proof of Theorem \ref{main1}} \label{jhjh}

\subsubsection{Ideas of the proof (a backwards in time method)} \label{explication}

For every $k\in K(n,T)$, we want to track the exact values of $\xi_b$ and $\xi_j$ with $j\in K(n,t,T,k)$ from $t=T$ until $t=0$.  We assume that no interactions occurs over $(t_1,T]$ and at $t=t_1$ we have either a wave-wave interaction (see Figure \ref{2W}) or a wave-SV interaction (see Figure \ref{modbus}, \ref{NSright}, \ref{NSleft} and Figure  \ref{left1}). For every $t\in (t_1 , T]$ and for every $k\in K(n,T)=K(n,t)$, we get $\Delta \rho^n_k(T)\xi_k^n(T) =  \Delta \rho^n_k(t)\xi_k^n(t)$ and $ \xi^n_b(T) = \xi^n_b(t)$.


\begin{itemize}
\item If the wave $1$ interacts with the wave $2$ producing the wave $3$  (see Figure \ref{2W}) then, from Lemma \ref {bb}, 
\begin{equation} \label{etape1}\left\{ \begin{array}{l}
\xi_b (T)=\xi_b(t_1^-),\\
\Delta \rho_k(T) \xi_k(T)=\Delta \rho_k(t_1^-) \xi_k(t_1^-), \quad \text{for every} \quad k\in K(n, T) \backslash \{3\}=K(n,t_1^-)\backslash \{1,2\},\\ 
  \Delta \rho_{3}(T)\xi_{3}(T)=\Delta \rho_{1}(t_1^-) \xi_{1}(t_1^-)+\Delta \rho_{2}(t_1^-) \xi_{2}(t_1^-).
  \end{array} \right. \end{equation}

\item If  the wave $k$ with $k\in K(n,T)$ interacts with the SV trajectory at $t=t_1$ (see Figures \ref{modbus}, \ref{NSright},  \ref{NSleft} and \ref{left1}) then, from Lemma \ref{cc}, \ref{dd} and Lemma \ref{ee}, there exists $(W_{b,k}^1(t_1), W_{b,k}^2(t_1), W_k^1(t_1), W_k^2(t_1)) \in \R_+^4$ such that $K(n,T)=K(n,t_1^-)$ and
\begin{equation} \label{etape2} \left\{ \begin{array}{l}
\xi_b(T)=W^1_{b,k}(t_1) \xi_b(t_1^-)+W^1_k(t_1)\Delta \rho_k(t_1^-) \xi_{k}(t_1^-),\\
\Delta \rho_{ k}(T) \xi_{ k}(T)=W^2_{b,k}(t_1)\xi_{b}(t_1^-)+W^2_{k}(t_1) \Delta \rho_{ k}(t_1^-) \xi_{ k}(t_1^-),\\
\Delta \rho_j(T) \xi_j(T)=\Delta \rho_j(t_1^-) \xi_j(t_1^-), \quad \text{for every} \, \, j\in K(n,T) \backslash \{ k\}=K(n,t_1^-) \backslash \{ k\},\\
\end{array} \right.  \end{equation}
\end{itemize}
At time $t=t_1>0$, we repeat the previous strategy considering $K(n,t_1^-)$ instead of $K(n,T)$ until a second interaction time $t=t_2>0$ and so on. Combining \eqref{etape1} with \eqref{etape2}, for every $k \in K(n,T)$, there exist  $W^1_b(0),W^2_{b,k}(0), (W^1_{j,k}(0))_{j\in K(n,0)}, (W^2_{j,k}(0))_{j\in K(n,0)} \in \R_+^2 \times \R^{\vert K(n,0)\vert} \times \R^{\vert K(n,0)\vert}$  such that 
\begin{equation} \label{but}
 \left\{ \begin{array}{l}
\xi_b(T)=W_b^1(0) \xi_b(0)+\sum_{j\in K(n,0)}W^2_{j,k}(0)\Delta \rho_j(0) \xi_{j}(0),\\
\Delta \rho_{k}(T) \xi_{k}(T)=W_{b,k}^{2}(0)\xi_{b}(0)+\sum_{j\in K(n,0)}W_{j,k}(0) \Delta \rho_{j}(0) \xi_{j}(0).\\
\end{array} \right.  \end{equation}
From \eqref{but}, we construct explicitly weight functions $(W^n_k(0))_{k\in K(n,0)}$ and $W^n_b(0)$  such that 
\begin{equation} \label{gg23}
 \sum_{k\in K(n,T)} \vert \Delta \rho^n_k(T)\xi_k^n(T)\vert + \vert \xi^n_b(T) \vert  \leq  \sum_{k\in K(n,0)} \vert W^n_k(0)\Delta \rho^n_k(0)\xi_k^n(0)\vert + \vert W^n_b(0) \xi^n_b(0) \vert.
\end{equation}
The desired inequality \eqref{gg} is obtained using the following Lemma
\begin{lemma} \label{ffff}
Let $T>0$ and $n\in \N^*$.  There exists $((W^n_k(0))_{k \in K(n,0)}, W^n_b(0))\in \R^{\vert K(n,0) \vert} \times (\R^*_+)$ such that  \begin{equation} \label{gg2}
 \sum_{k\in K(n,T)} \vert \Delta \rho^n_k(T)\xi_k^n(T)\vert + \vert \xi^n_b(T) \vert  \leq  \sum_{k\in K(n,0)} \vert W^n_k(0)\Delta \rho^n_k(0)\xi_k^n(0)\vert + \vert W^n_b(0) \xi^n_b(0) \vert ,
\end{equation}
with $$\max(\vert W^n_k(0),  W^n_b(0)) \leq C, \quad \text{for every} \, \, k \in K(n,0), $$
with $C>0$ a constant independent of $n$.
\end{lemma}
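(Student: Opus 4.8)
The plan is to implement the backwards-in-time induction sketched in Section \ref{explication} and to control, at each interaction time, how the weights $W^1_{b,k}$, $W^2_{b,k}$, $W^1_k$, $W^2_k$ and the coefficients of the expansions \eqref{etape1}--\eqref{etape2} grow. The central idea is to introduce a single scalar ``potential'' of the form $\Phi(t)=\sum_{k\in K(n,t)}\omega_k(t)\,|\Delta\rho_k(t)\,\xi_k(t)|+\omega_b(t)\,|\xi_b(t)|$, where the weights $\omega_k(t)$ and $\omega_b(t)$ are defined so that $\Phi$ is non-increasing in $t$ (equivalently, reading backwards, so that $\Phi(T)\le\Phi(0)$), and so that all weights stay bounded by a constant $C$ depending only on $\rho^*$ (hence on $V_b$ and $\alpha$), not on $n$. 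Concretely I would set $\omega_k\equiv 1$ for all classical waves and take $\omega_b$ to be a constant (to be chosen, e.g.\ $\omega_b=\tfrac{2}{\rho^*}$ or a bit larger) multiplying $|\xi_b|$; the three regimes a/b/c in Lemma \ref{bababa} are exactly what is needed to absorb the cross terms coming from the SV interactions.

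First I would handle the wave-wave interactions (Figure \ref{2W}): by Lemma \ref{bb}, $\Delta\rho_3\xi_3=\Delta\rho_1\xi_1+\Delta\rho_2\xi_2$, so $|\Delta\rho_3\xi_3|\le|\Delta\rho_1\xi_1|+|\Delta\rho_2\xi_2|$ and the $\xi_b$ term is untouched; thus $\Phi$ cannot increase across such an interaction, for any choice of weights. Next I would treat the wave-SV interactions case by case. For the interaction of Lemma \ref{cc} (no non-classical shock created or cancelled), $\xi_k$ is unchanged and $\xi_b(\bar t^+)=(1-\psi)\xi_b(\bar t^-)+\psi\,\xi_k(\bar t^-)$; since $\Delta\rho_k$ is bounded below by $\tfrac{1}{2^{n+1}}$ in absolute value this is delicate, so instead I would write $\psi\,\xi_k=\frac{\psi}{\Delta\rho_k}\,(\Delta\rho_k\xi_k)$ and use part \textbf{B} of Lemma \ref{bababa} to bound $|\psi/\Delta\rho_k|\le\tfrac{2}{\rho^*}$ together with part \textbf{A} for $|1-\psi|$. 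Choosing $\omega_b$ large enough (a fixed multiple of $\tfrac{2}{\rho^*}$), the contribution $\omega_b|\psi/\Delta\rho_k||\Delta\rho_k\xi_k|$ created on the $\xi_k$ slot is compensated by the fact that $\omega_b|1-\psi|\le\omega_b$ does not quite work — here part \textbf{C} ($|\psi/(\rho_R-\rho_L)|+|1-\psi|\tfrac{2}{\rho^*}\le\tfrac{2}{\rho^*}$, valid on $[0,\check\rho_\alpha]$) is the sharp inequality that makes $\Phi$ monotone in that sub-case, and in the complementary range $\rho_L\in[\hat\rho_\alpha,1]$ one uses \textbf{A}+\textbf{B} with a slightly larger but still $n$-independent constant. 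For Lemmas \ref{dd} and \ref{ee} (a non-classical shock is created or cancelled), the extra term $(\check\rho_\alpha-\hat\rho_\alpha)\xi_b$ has coefficient bounded by the fixed gap $\hat\rho_\alpha-\check\rho_\alpha$, and $\xi_b$ is either unchanged (Lemma \ref{dd}) or obeys the same $(1-\psi,\psi)$ rule (Lemma \ref{ee}); in both cases the balance is again closed by Lemma \ref{bababa}, possibly after enlarging $\omega_b$ once and for all.

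Having shown that across every interaction $\Phi(\bar t^-)\ge\Phi(\bar t^+)$ with weights that never exceed a constant $C=C(\rho^*)$, and recalling that between interactions all shifts $\xi_k,\xi_b$ and all $\Delta\rho_k\xi_k$ are constant (hence $\Phi$ is constant), I would conclude by iterating over the finitely many interaction times $T>t_1>t_2>\cdots>0$ of the wave-front tracking solution: this yields $\Phi(T)\le\Phi(0)$, i.e. precisely \eqref{gg2} with $W^n_k(0)=\omega_k(0)=1$ and $W^n_b(0)=\omega_b(0)=\omega_b\le C$. Tracing back through \eqref{but}, the explicit weights $W^n_k(0)$, $W^n_b(0)$ are products and convex-type combinations of the per-interaction coefficients, each bounded by $C$, and crucially Lemma \ref{entropy} guarantees that the ancestor sets $K(n,t_1,t_2,k)$ are disjoint for distinct $k$, so summing over $k\in K(n,T)$ does not multiply the constant by the (unbounded in $n$) number of waves. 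The main obstacle is the SV-interaction bookkeeping: the weight $\omega_b$ on $|\xi_b|$ must be chosen once, globally, large enough to dominate every ``$\psi/\Delta\rho_k$'' term that any wave can dump onto $\xi_b$ or receive from it, while still remaining bounded independently of $n$ — and verifying that a single constant works simultaneously in all of the cases of Figures \ref{modbus}, \ref{NSright}, \ref{NSleft}, \ref{left1} is exactly the content of the three-part estimate in Lemma \ref{bababa} and is where the argument is tightest.
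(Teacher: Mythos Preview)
Your proposal has a genuine gap at the very point the paper singles out as the obstacle. Consider a single wave--SV interaction of the type in Lemma~\ref{cc}: there $\xi_k(\bar t^+)=\xi_k(\bar t^-)$ exactly, while $\xi_b(\bar t^+)=(1-\psi)\xi_b(\bar t^-)+\tfrac{\psi}{\Delta\rho_k}\,\Delta\rho_k\xi_k(\bar t^-)$. With your choice $\omega_k\equiv 1$ the wave contribution $|\Delta\rho_k\xi_k|$ is unchanged across the interaction, so for $\Phi$ to be non-increasing you would need $|\xi_b(\bar t^+)|\le|\xi_b(\bar t^-)|$ alone. This simply fails: take $\xi_b(\bar t^-)=0$ and $\xi_k(\bar t^-)\neq 0$ to see $\Phi(\bar t^+)>\Phi(\bar t^-)$ for any constant $\omega_b>0$. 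Part~\textbf{C} of Lemma~\ref{bababa} does not rescue a single-step monotonicity; it is an inequality on coefficients, not a compensation mechanism, because on the wave side there is nothing that decreases. This is precisely why the paper's Remark after \eqref{gg23} says the ``usual local'' monotone-weight approach is more difficult to apply here, and why a backward-in-time method is introduced instead.

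What the paper actually does is quite different from a per-interaction potential. It first classifies creation/cancellation pairs of non-classical shocks into four types \textbf{NC1}--\textbf{NC4} (Section~\ref{fefefe}), proves that the dangerous sub-cases \textbf{NC1-a)}/\textbf{NC2-a)} can occur at most once (Lemmas~\ref{rare}, \ref{rare2}), reduces \textbf{NC1-b)}, \textbf{NC2-b)}, \textbf{NC3} to wave--wave and wave--SV patterns, and shows via Lemma~\ref{finite} that \textbf{NC4} ancestors never revisit the SV. The heart is then Lemma~\ref{coco}: by Lemma~\ref{pp}, whenever two SV-modifying interactions are chained ($1\prec_n 2$), the later one necessarily has $\rho_L\in[0,\check\rho_\alpha]$, and only then is part~\textbf{C} of Lemma~\ref{bababa} invoked to telescope the nested sums $\sum_k \tfrac{\psi^k}{\Delta\rho_k}\prod(1-\psi^j)$ down to the single bound $2/\rho^*$. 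The remaining factors $\prod|1-\psi^{p_i}|$ (from interactions not in the chain) are controlled by part~\textbf{A} together with $m/2^{n+1}\le TV(\rho_0)$, giving the $\exp(3TV(\rho_0)/\rho^*)$ in \eqref{vital}. None of this is a monotonicity-of-$\Phi$ statement; it is an explicit backward expansion of $\xi_b(T)$ and $\Delta\rho_k(T)\xi_k(T)$ whose coefficients are bounded by a global, not local, argument.
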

Lemma \ref{ffff} is proved by considering only the interactions  which may occur an infinite number of times with the SV trajectory. In theses cases, the wave and SV shifts may blow up.
In Section \ref{fefefe}, we study the evolution of the SV shift and the evolution of the wave shifts when a non classical shock is created and then cancelled as well as the number of times these particular types of interaction can occur successively. In Section \ref{sectionsection}, we determine the expressions of $W_k^n(0)$ and $W_k^n(0)$, defined in \eqref{gg2}, in the case where  the wave and SV shifts may blow up. Lemma \ref{bababa}  proves that $W_k^n(0)$ and $W_b^n(0)$ are bounded  independent of $n$.  \\

\textbf{Example:} We consider the particular case presented in Figure \ref{2W1}. From Lemma \ref{bb} and Lemma \ref{cc} and $\xi_b(T)=\xi_b(t_3^+)$, we have 
$$\left\{ \begin{array}{l}\xi_b( T)=(1-\psi(\rho_L^{19},\rho_R^{19})\xi_b(t_3^-)+\frac{\psi(\rho_L^{19},\rho_R^{19})}{\Delta \rho_{19}(t_3^-)}\Delta \rho_{19}(t_3^-)\xi_{19}(t_3^-),\\
\Delta \rho_{14}(T)\xi_{14}(T)=\sum_{k \in \{2,3,4,5,6,9\}}\Delta \rho_{k}(t_3^-)\xi_k(t_3^-),  \\
\Delta \rho_{19}(T)\xi_{19}(T)=\Delta \rho_{19}(t_3^-)\xi_{19}(t_3^-),\\
  \end{array} \right.$$
  with $\psi(\rho_L^{19},\rho_R^{19})=\psi(\rho_L^{19}(t_3^+),\rho_R^{19}(t_3^+))=\psi(\rho_L^{19}(t_3^-),\rho_R^{19}(t_3^-))$ and $\Delta \rho_{19}(t_3^-)=\rho_R^{19}-\rho_L^{19}$.
From  Lemma \ref{bb} and Lemma \ref{ee}, we get
$$ \left\{ \begin{array}{l}
\xi_b(t_2^+)=(1-\psi(\rho_L^{9}(t_2^-),\rho_R^{9}(t_2^-))) \xi_b(t_2^-)+\frac{\psi(\rho_L^{9}(t_2^-),\rho_R^{9}(t_2^-))}{\Delta \rho_{9}(t_2^-)}\Delta \rho_{9}(t_2^-)\xi_{9}(t_2^-),\\
\Delta \rho_9(t_2^+) \xi_{9}(t_2^+)=\Delta \rho_9(t_2^-) \xi_{9}(t_2^-)+(\check{\rho}_\alpha-\hat{\rho}_\alpha)\xi_{b}(t_2^-),\\
\Delta \rho_{19}(t_2^+) \xi_{19}(t_2^+)=\Delta \rho_{17}(t_2^-) \xi_{17}(t_2^-)+\Delta \rho_{18}(t_2^-) \xi_{18}(t_2^-).\\
\end{array} \right.  $$
We notice that $\rho_L^{9}(t_2^-)=\check \rho_{\alpha}$ and  $\rho_R^{9}(t_2^+)=\rho_R^{9}(t_2^-)$. Since $\xi_b(t_3^-)=\xi_b(t_2^+)$,
\begin{displaymath}  
\left\{ \begin{array}{l}\xi_b( T)=(1-\psi(\rho_L^{19},\rho_R^{19}))(1-\psi(\rho_L^{9}(t_2^-),\rho_R^{9}(t_2^-)))\xi_b(t_2^-), \\
\, \quad + \,(1-\psi(\rho_L^{19},\rho_R^{19}))\frac{\psi(\rho_L^{9}(t_2^-),\rho_R^{9}(t_2^-))}{\Delta \rho_{9}(t_2^-)}\Delta \rho_{9}(t_2^-)\xi_{9}(t_2^-) +\frac{\psi(\rho_L^{19},\rho_R^{19})}{\Delta \rho_{19}(t_3^-)}\sum_{k\in \{17,18\}}\Delta \rho_{k}(t_2^-)\xi_{k}(t_2^-),\\
\Delta \rho_{14}(T)\xi_{14}(T)=\sum_{k \in \{2,3,4,5,6,9\}}\Delta \rho_{k}(t_2^-)\xi_k(t_2^-)+(\check{\rho}_\alpha-\hat{\rho}_\alpha)\xi_{b}(t_2^-),  \\
\Delta \rho_{19}(T)\xi_{19}(T)=\sum_{k \in \{17,18\}}\Delta \rho_{k}(t_2^-)\xi_k(t_2^-).\\
  \end{array} \right. \end{displaymath}
 From Lemma \ref{bb} and Lemma \ref{dd},
$$ \left\{ \begin{array}{l}
\xi_b(t_1^+)=\xi_b(t_1^-),\\
\Delta \rho_7(t_1^+) \xi_{7}(t_1^+)+(\check{\rho}_\alpha-\hat{\rho}_\alpha)\xi_{b}(t_1^+)=\Delta \rho_7(t_1^-) \xi_{7}( t_1^-),\\
\Delta \rho_{17}(t_1^+) \xi_{17}(t_1^+)=\Delta \rho_{15}(t_1^-) \xi_{15}(t_1^-)+\Delta \rho_{16}(t_1^-) \xi_{16}(t_1^-).
\end{array} \right.  $$
Since $\xi_b(t_2-)=\xi_b(t_1^+)$ and $\Delta \rho_{9}(t_2^-)\xi_{9}(t_2^-)=\Delta \rho_{7}(t_1^+)\xi_{7}(t_1^+)+\Delta \rho_{8}(t_1^+)\xi_{8}(t_1^+)$, 
$$\left\{ \begin{array}{l}\xi_b( T)=(1-\psi(\rho_L^{19},\rho_R^{19}))(1-\psi(\rho_L^{9}(t_2^-),\rho_R^{9}(t_2^-)))\xi_b(t_1^-), \\
+ (1-\psi(\rho_L^{19},\rho_R^{19}))\frac{\psi(\rho_L^{9}(t_2^-),\rho_R^{9}(t_2^-))}{\Delta \rho_{9}(t_2^-)}\sum_{k=7}^8\Delta \rho_{k}(t_1^-)\xi_{k}(t_1^-)+\frac{\psi(\rho_L^{19},\rho_R^{19})}{\Delta \rho_{19}(t_3^-)}\sum_{k\in \{15,16,18\}}\Delta \rho_{k}(t_1^-)\xi_{k}(t_1^-),\\
\Delta \rho_{14}(T)\xi_{14}(T)=\sum_{k \in \{2,3,4,5,6,7,8\}}\Delta \rho_{k}(t_1^-)\xi_k(t_1^-),  \\
\Delta \rho_{19}(T)\xi_{19}(T)=\sum_{k \in \{15,16,18\}}\Delta \rho_{k}(t_1^-)\xi_k(t_1^-).\\
  \end{array} \right.$$
By convention, we have $\xi_1(0)=\xi_k(0^+)$ for every $k\in \{2,3,4,5,6\}$. Using $\xi_b(t_1+)=\xi_b(0)$, we conclude that 
$$\left\{ \begin{array}{l}\xi_b( T)=(1-\psi(\rho_L^{19},\rho_R^{19}))(1-\psi(\rho_L^{9}(t_2^-),\rho_R^{9}(t_2^-)))\xi_b(0), \\
+ \,(1-\psi(\rho_L^{19},\rho_R^{19}))\frac{\psi(\rho_L^{9}(t_2^-),\rho_R^{9}(t_2^-))}{\Delta \rho_{9}(t_2^-)}\sum_{k\in \{7,8\}}\Delta \rho_{k}(0)\xi_{k}(0)+\frac{\psi(\rho_L^{19},\rho_R^{19})}{\Delta \rho_{19}(t_3^-)}\sum_{k\in \{15,16,18\}}\Delta \rho_{k}(0)\xi_{k}(0),\\
\Delta \rho_{14}(T)\xi_{14}(T)=\sum_{k \in \{1,7,8\}}\Delta \rho_{k}(0)\xi_k(0),  \\
\Delta \rho_{19}(T)\xi_{19}(T)=\sum_{k \in \{15,16,18\}}\Delta \rho_{k}(0)\xi_k(0).\\
  \end{array} \right.$$
In the particular case presented in Figure \ref{2W1}, the inequality \eqref{gg} becomes 
$$
 \sum_{k\in \{14,19\} } \vert \Delta \rho^n_k(T)\xi_k^n(T)\vert + \vert \xi^n_b(T) \vert  \leq  \sum_{k\in \{1,7,8,15,16,18\}}  W^n_k(0)\vert\Delta \rho^n_k(0)\xi_k^n(0)\vert +  W^n_b(0)\vert \xi^n_b(0) \vert,
$$
with 
 $$ \left\{ \begin{array}{l}
 W_b(0)= \vert(1-\psi(\rho_L^{19},\rho_R^{19}))(1-\psi(\rho_L^{9}(t_2^-),\rho_R^{9}(t_2^-))) \vert, \\
 W_1(0)= 1,  \\
 W_7(0)=W_8(0)= 1+\vert (1-\psi(\rho_L^{19},\rho_R^{19}))\frac{\psi(\rho_L^{9}(t_2^-),\rho_R^{9}(t_2^-))}{\Delta \rho_{9}(t_2^-)} \vert, \\
  W_{15}(0)=W_{16}(0)=W_{18}(0)=  1+\vert \frac{\psi(\rho_L^{19},\rho_R^{19})}{\Delta \rho_{19}(t_3^-)} \vert.   \\
   \end{array}
 \right.
 $$
Applying Lemma \ref{bababa}, we obtain 
$$\max(\vert W^n_k(0),  W^n_b(0)) \leq 1+\left(1+\frac{2}{\rho^*}\right)^2, \quad \text{for every} \, \, k\in \{1,7,8,15,16,18\} , $$


\begin{remark}
 We assume that the wave $k_1$ interacts with the SV trajectory at $t=t_1$ creating a non classical shock (Lemma \ref{dd}) and  the wave $k_2$ interacts with the SV trajectory at $t=t_2$ cancelling the previous non classical shock (Lemma \ref{ee})). Using $\xi_b(t_1^+)=\xi_b(t_2^-)$, by straighforward computations we have
$$\Delta \rho_{k_1}(t_1^+) \xi_{k_1}(t_1^+)+\Delta \rho_{k_2}(t_2^+) \xi_{k_2}(t_2^+)=\Delta \rho_{k_1}(t_1^-) \xi_{k_1}(t_1^-)+\Delta \rho_{k_2}(t_2^-) \xi_{k_2}(t_2^-)+ \textbf{0}\, \xi_b(t_1^+),$$  and
$$\vert \Delta \rho_{k_1}(t_1^+) \xi_{k_1}(t_1^+)\vert +\vert \Delta \rho_{k_2}(t_2^+) \xi_{k_2}(t_2^+)\vert \leq \vert \Delta \rho_{k_1}(t_1^-) \xi_{k_1}(t_1^-)\vert +\vert \Delta \rho_{k_2}(t_2^-) \xi_{k_2}(t_2^-)\vert +\vert  2(\check{\rho}_\alpha-\hat{\rho}_\alpha)\vert \vert \xi_b(t_1^+)\vert .$$ Since this type of interactions can occur an infinite number of times, the usual locally method, which consists in constructing a weight function $W_b$ for the SV shift and weight functions $W_k$ for waves such that $t \mapsto W_b(t) \vert \xi_b(t) \vert +W_k(t) \vert \Delta \rho_k(t) \xi_k(t) \vert $ for every $k\in K(n,t)$ are not increasing in time (see \cite{BS99,CRM03,M14}), is more difficult to apply. That is why we introduce a backward in time method described above which captures all information over $ [0,T]$. 
\end{remark}
\begin{remark}
Since $k^{\text{th}}$-wave may interact an infinite number of times with the SV trajectory, to find an upper bound of the weight $W_k^n(0)$, we have to prove that an infinite serie is bounded, which is the difficult point of this proof (see Proof of Lemma \ref{coco}). 
\end{remark}
To obtain \eqref{but}, we need to have a better understandable of the creation and cancellation of a non classical shock (see section \ref{fefefe}).
    
\subsubsection{Creation and cancellation of a non classical shock} \label{fefefe}
We assume that a non classical shock is created at $t=t_1>0$. Let $t_2$ the first time after $t_1$ where a wave interacts with the SV trajectory. From Lemma \ref{test}, the non classical shock is cancelled at time $t_2$. We have two possibilities to create a non classical shock (see Figure \ref{NSright} \textcolor{red}{b)} and Figure \ref{NSleft} \textcolor{red}{b)}) and we have two possibilities to cancel a non classical shock (see Figure \ref{NSright} \textcolor{red}{a)} and Figure \ref{NSleft} \textcolor{red}{a)}). Thus, we have four types of interaction denoted by 
\textbf{\textcolor{blue}{NC1}}, \textbf{\textcolor{blue}{NC2}}, \textbf{\textcolor{blue}{NC3}} and \textbf{\textcolor{blue}{NC4}} which will be described below.  For each possible case, we determine the evolution of the tangent vectors backwards in time. 


\begin{itemize} 
  \item[\textbf{\textcolor{blue}{NC1:}}]  A \textbf{\textcolor{blue}{NC1}} interaction is obtained combining Figure \ref{NSright} \textcolor{red}{b)}  with Figure \ref{NSleft} \textcolor{red}{a)}; the wave $1$ \\
   $(\rho_L^{1}(t_1^-),\rho_R^{1}(t_1^-))$ interacts with the SV trajectory at time $t_1$ creating a non classical shock and a wave $(\rho_L^{1}(t_1^+),\rho_R^{1}(t_1^+))$ which will be  denoted by $1$ as well. The wave $2$ $(\rho_L^{2}(t_2^-),\rho_R^{2}(t_2^-))$ interacts with the SV trajectory at time $t_2$ cancelling a non classical shock and producing a wave $(\rho_L^{2}(t_2^+),\rho_R^{2}(t_2^+))$ which will be called $2$ as well (see Figure \ref{CN3} \textcolor{red}{a)}). In this case, 
$  \rho_L^{1}(t_1^-)=\hat \rho_{\alpha}$, $\rho_R^{1}(t_1^-)=\rho_R^{1}(t_1^+) \in [\check \rho_{\alpha},\hat \rho_{\alpha}]$, $\rho_L^1(t_1^+)= \check \rho_{\alpha}$ and $\rho_L^{2}(t_2^-)= \rho_L^{2}(t_2^+)\in [0,\check \rho_{\alpha}]$, $\rho_R^{2}(t_2^-)=\hat \rho_{\alpha}$, $\rho_R^{2}(t_2^+)=\check \rho_{\alpha}$.
     \begin{figure}
 \begin{minipage}[c]{.5\linewidth}
   \centering
      \includegraphics[width=0.6\linewidth]{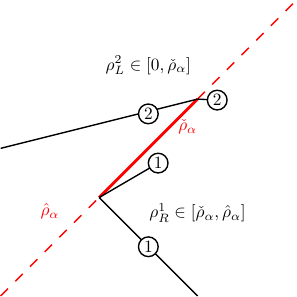}
      \caption*{Case $a)$}
   \end{minipage} \hfill
   \begin{minipage}[c]{.5\linewidth}
   \centering
      \includegraphics[width=0.6\linewidth]{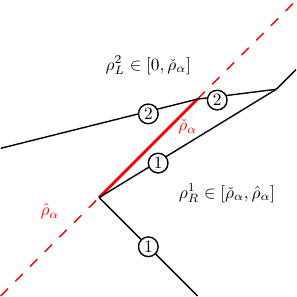}
      \caption*{Case $b)$}
   \end{minipage} \hfill
   \begin{minipage}[c]{.5\linewidth}
      \includegraphics[width=0.6\linewidth]{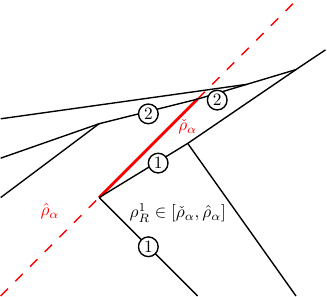}
      \centering
  \caption*{Case $c)$}
   \end{minipage}
  \caption{ \label{CN3} Different examples of   \textcolor{blue}{NC1} interactions}
\end{figure} From  Lemma \ref{dd} and Lemma \ref{ee}, we have 
 \begin{equation} \label{paaa}
 \left\{\begin{array}{l}
 \xi_b(t_2^+)=(1-\psi(\rho_L^{2}(t_2^-),\rho_R^{2}(t_2^-))) \xi_b(t_2^-)+\frac{\psi(\rho_L^{2}(t_2^-),\rho_R^{2}(t_2^-))}{\Delta \rho_2(t_2^-)}\Delta \rho_2(t_2^-)\xi_2(t_2^-),\\
\Delta \rho_2(t_2^+) \xi_{2}(t_2^+)=\Delta \rho_2(t_2^-) \xi_{2}(t_2^-)+(\check{\rho}_\alpha-\hat{\rho}_\alpha)\xi_{b}(t_2^-),\\
 \xi_b(t_1^+)=\xi_b(t_1^-),\\
\Delta \rho_1(t_1^+) \xi_{1}(t_1^+)+(\check{\rho}_\alpha-\hat{\rho}_\alpha)\xi_{b}(t_1^+)=\Delta \rho_1(t_1^-) \xi_{1}(t_1^-),\\
 \end{array}\right.
 \end{equation}
 with $\Delta \rho_{2}(t_2^-):= \rho_R^{2}(t_2^-)-\rho_L^{2}(t_2^-)$, $\Delta \rho_{1}(t_1^-):= \rho_R^{1}(t_1^-)-\rho_L^{2}(t_1^-)$, $\Delta \rho_{1}(t_1^+):= \rho_R^{1}(t_1^+)-\rho_L^{1}(t_1^+)$ and $\psi$ is defined in \eqref{ll}.

From Lemma \ref{test} and by definition of $t_2$, no other wave can interact with the non classical shock over $[t_1,t_2]$. Thus,  we have  $\xi_b(t_2^-)=\xi_b(t_1^+)$ (see Figure \ref{CN3} \textcolor{red}{a)}) and $\Delta \rho_2(t_2^-) \xi_{2}(t_2^-)=\sum_{k\in K(n,t_1^-,t_2^+,2)}\Delta \rho_k \xi_k$. Moreover, since $\rho_L^{2}(t_2^-)=\hat \rho_{\alpha}<\rho^*$, we have $\psi(\rho_L^{2}(t_2^-),\rho_R^{2}(t_2^-))=0$. Using \eqref{paaa}, we conclude that 
 \begin{equation} \label{panda22}
 \left\{\begin{array}{l}
 \xi_b(t_2^+)= \xi_b(t_1^-),\\
\Delta \rho_2(t_2^+) \xi_{2}(t_2^+)=\sum_{k\in K(n,t_1^-,t_2^+,2)}\Delta \rho_k \xi_k+(\check{\rho}_\alpha-\hat{\rho}_\alpha)\xi_{b}(t_1^-),\\
\Delta \rho_1(t_1^+) \xi_{1}(t_1^+)+(\check{\rho}_\alpha-\hat{\rho}_\alpha)\xi_{b}(t_1^-)=\Delta \rho_1(t_1^-) \xi_{1}(t_1^-).\\
 \end{array}\right.
 \end{equation}
We distinguish two different cases. By construction, there exists a couple $(k_1,k_2) \in K(n,T)^2$ such that $1\in K(n,t_1^+,T,k_1)=K(n,t_1^-,T,k_1)$ and $2 \in K(n,t_2^+,T,k_2)=K(n,t_2^-,T,k_2)$.
 \begin{itemize}
    \item[\textbf{\textcolor{blue}{NC1-a)}}] $k_1 \neq k_2$; roughly speaking the wave $1$  never interacts with the wave $2$. Let $k_0 \in K(n,t_2^+,T,k_1)$ such that $1\in K(n,t_1^+,t_2^+,k_0)$. From  \eqref{panda22}, we have
\begin{equation} \label{casrare} 
 \left\{\begin{array}{l}
 \xi_b(t_2^+)= \xi_b(t_1^-),\\
\Delta \rho_2(t_2^+) \xi_{2}(t_2^+)=\sum_{k\in K(n,t_1^-,t_2^+,2)}\Delta \rho_k \xi_k+(\check{\rho}_\alpha-\hat{\rho}_\alpha)\xi_{b}(t_1^-),\\
\Delta \rho_{k_{0}}(t_2^+) \xi_{k_0}(t_2^+)=\sum_{k\in K(n,t_1^-,t_2^+,k_0)}\Delta \rho_k \xi_k-(\check{\rho}_\alpha-\hat{\rho}_\alpha)\xi_{b}(t_1^-).\\
 \end{array}\right.
 \end{equation} 

  \begin{lemma} \label{rare}
 A wave coming from the right  cannot interact with the SV trajectory  at time $t>t_2$. In particular, a \textbf{\textcolor{blue}{NC1-a)}} interaction occurs at most one time.
  \end{lemma}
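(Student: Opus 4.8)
The plan is to show that as soon as the NC1-a) interaction is finished at time $t_2$, the slow vehicle is permanently ``shielded'' on its right by a classical wave travelling strictly faster than $V_b$, so that it can never again catch up with any wave lying ahead of it. Granting this, the count of NC1-a) interactions follows at once: every NC1-a) interaction starts, at its time $t_1$, with a wave reaching the SV \emph{from the right} (the interaction of Figure \ref{NSright}\,b) that creates the non classical shock), and no wave interaction with the SV occurs on $(t_1,t_2)$ by Lemma \ref{test} and the definition of $t_2$; hence if there were two NC1-a) interactions, the one with the larger $t_2$ would require a wave coming from the right at some time $>t_2$ of the other, which the first part excludes.

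First I would record the geometry at $t_2^+$. From the description of the NC1-a) interaction and \eqref{panda22}, the state immediately to the right of the SV is $\rho_L^2\in[0,\check\rho_\alpha]$, so $\dot y(t_2^+)=\omega(\rho_L^2)=V_b$ since $\rho_L^2\le\check\rho_\alpha<\rho^*$; just ahead of the SV sit the outgoing wave $2=(\rho_L^2,\check\rho_\alpha)$, then the constant state $\check\rho_\alpha$, then wave $1=(\check\rho_\alpha,\rho_R^1)$ with $\rho_R^1\in[\check\rho_\alpha,\hat\rho_\alpha]$. Using $\sigma(\rho_L,\rho_R)=1-\rho_L-\rho_R$ and the identity $\check\rho_\alpha+\hat\rho_\alpha=\rho^*$ (both points lie on the line $\rho\mapsto F_\alpha+V_b\rho$, whose secant slope with $f$ is $V_b$), one gets
\[
\sigma(\rho_L^2,\check\rho_\alpha)=1-\rho_L^2-\check\rho_\alpha>1-\check\rho_\alpha-\hat\rho_\alpha=V_b,
\qquad
\sigma(\check\rho_\alpha,\rho_R^1)=1-\check\rho_\alpha-\rho_R^1\ge V_b,
\]
so waves $1$ and $2$ both recede from the SV. Next I would bring in the hypothesis $k_1\neq k_2$: by the disjointness of ancestor sets in Lemma \ref{entropy}, wave $2$ and wave $1$ (and all their descendants) never interact; since wave $2$ is the faster of the two and is separated from wave $1$ only by the constant state $\check\rho_\alpha$, this forces that intermediate state never to be consumed, so wave $2$'s descendant stays of the form $(\cdot,\check\rho_\alpha)$, remains immediately ahead of the SV, and can only interact with fronts crossing from the SV side.

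The heart of the proof is then the invariant: \emph{for every $t>t_2$ the density immediately to the right of the SV is $<\hat\rho_\alpha$ (hence $<\rho^*$, so $\dot y\equiv V_b$), and the first wave ahead of the SV has speed $>V_b$.} I would prove it by induction over the finitely many interactions after $t_2$. The density to the right of the SV changes only when a single front $(\rho_a,\rho_b)$ crosses the SV from the left, $\rho_b$ being the current right state; such a front must be faster than $V_b$, i.e. $\rho_a+\rho_b<\rho^*$, and $|\rho_a-\rho_b|\le 3\cdot 2^{-n-1}$. For $n$ large enough that $3\cdot2^{-n-1}<\hat\rho_\alpha-\check\rho_\alpha$ and $2\check\rho_\alpha+3\cdot2^{-n-1}<\rho^*$, these two facts force $\rho_a<\hat\rho_\alpha$ (if $\rho_b\ge\check\rho_\alpha$ then $\rho_a<\rho^*-\rho_b\le\hat\rho_\alpha$; if $\rho_b<\check\rho_\alpha$ then $\rho_a\le\rho_b+3\cdot2^{-n-1}<\check\rho_\alpha+3\cdot2^{-n-1}<\hat\rho_\alpha$), and $\rho_a$ becomes the new right state. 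For the first wave ahead of the SV: it is either wave $2$'s current descendant $(\rho_{sv},\check\rho_\alpha)$ with $\rho_{sv}<\hat\rho_\alpha$, of speed $1-\rho_{sv}-\check\rho_\alpha>V_b$, or a front that just crossed the SV from the left, of speed $>V_b$ by the crossing condition; and one checks, running through the wave--wave interactions admissible in the region between the SV and wave $2$'s descendant (Lemma \ref{123}, Lemma \ref{bb}), that no wave of speed $\le V_b$ is ever produced there. Once the invariant is in place, the first wave ahead of the SV always recedes from it, so the SV never reaches a wave on its right: no interaction from the right can occur for $t>t_2$.

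The hard part will be exactly that last case analysis — checking that ``the first wave ahead of the SV has speed $>V_b$'' is genuinely preserved by every admissible interaction between the SV trajectory and wave $2$'s descendant, and in particular that the density immediately to the right of the SV cannot slowly creep above $\hat\rho_\alpha$ through a long chain of rarefaction fronts crossing from the left. This is where the single-front mesh bound $3\cdot2^{-n-1}$ and the identity $\check\rho_\alpha+\hat\rho_\alpha=\rho^*$ must be combined carefully, and where the hypothesis $k_1\neq k_2$ is essential, since it is what prevents wave $1$ and wave $2$ from merging and thereby destroying the $\check\rho_\alpha$ shield.
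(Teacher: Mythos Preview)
Your approach is sound but takes a much longer route than the paper's. The paper dispatches the lemma in three lines by contradiction: suppose some wave $k$ reaches the SV from the right at a time $t>t_2$; since at $t_2^+$ both wave $1$ and wave $2$ sit to the right of the SV, and since fronts in wave-front tracking can only merge and never cross (Lemma \ref{123}), $k$ must have absorbed both of them, i.e.\ $1\prec_n k$ and $2\prec_n k$; the common descendant of $k$ in $K(n,T)$ is then simultaneously $k_1$ and $k_2$, contradicting the NC1-a) hypothesis $k_1\neq k_2$. No density or speed bookkeeping enters: the contradiction lives entirely in the ancestor partial order $\prec_n$. Your invariant argument is, in effect, the contrapositive of this --- you use $k_1\neq k_2$ up front to keep waves $1$ and $2$ forever unmerged, and then show the region between the SV and wave $1$'s descendant can never produce a front slow enough to catch the SV --- so it buys a fully explicit mechanism at the price of the long case analysis you flag as ``the hard part''.

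Two technical fixes if you pursue your route. First, the bound $|\rho_a-\rho_b|\le 3\cdot 2^{-n-1}$ holds only for rarefaction fronts, not shocks; but you do not need it at all, because the Riemann-solver condition for Figure \ref{left1} already forces $\rho_a\in[0,\check\rho_\alpha]$ for every front crossing the SV from the left, with no mesh restriction and no ``$n$ large enough''. Second, the right invariant is stronger than the one you state: \emph{every} density value between the SV and wave $1$'s descendant lies in $[0,\check\rho_\alpha]$, not merely the one adjacent to the SV. This is what you actually need to rule out a slow front appearing after a wave--wave merge inside that region, and once stated this way it is immediate to check (left-crossings of type Figure \ref{left1} introduce only values in $[0,\check\rho_\alpha]$, merges create no new density values, and NSleft b)/a) leave $\check\rho_\alpha$ on the right of the non-classical shock), so your ``hard part'' dissolves.
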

  \begin{proof}
   Assuming a wave $k$, coming from the right,  interacts with the SV trajectory at $t>t_2$. Using Lemma \ref{123}, we have $1 \prec_n k$ and $2 \prec_n k$. Thus, $k_1=k_2$, whence the contradiction. Since $\textbf{\textcolor{blue}{NC1-a)}}$ starts with an interaction coming from the right of the SV trajectory, the case $\textbf{\textcolor{blue}{NC1-a)}}$ can happen only once. 
  
  \end{proof}

 \item[\textbf{\textcolor{blue}{NC1-b)}}] $k_1=k_2$; two different types of  \textbf{\textcolor{blue}{NC1-b)}} interaction are illustrated   in Figure \ref{CN3} \textcolor{red}{b)} and  in Figure \ref{CN3} \textcolor{red}{c)}.  Roughly speaking the wave $1$  interacts with the wave $2$. Since $t_2$ is the first time after $t_1$ where a wave interacts with the SV trajectory and $k_1=k_2$, there exist $t_3 \in(t_2,T]$ and a wave $3 \in K(n,t_3^+,T,k_1)$ such that every wave $k \in K(n,t_1^+,t_3^+,3)\backslash \{2\}$ does not interact with the SV trajectory. Thus, we have 
 \begin{equation} \label{aaa}\Delta \rho_3(t_3^+) \xi_3(t_3^+)=\Delta \rho_1(t_1^+) \xi_1(t_1^+)+\Delta \rho_2(t_2^+) \xi_2(t_2^+) + \sum_{k\in K(n,t_1^+,t_3^+,3)\backslash \left(K(n,t_1^+,t_2^+,2)\cup\{1\}\right)} \Delta \rho_k \xi_k. \end{equation}
 From \eqref{panda22}, \eqref{aaa} and the equality $\Delta \rho_2(t_2^-) \xi_2(t_2^-)=\sum_{k\in K(n,t_1^-,t_2^-,2)} \Delta \rho_k \xi_k$, we conclude that
 \begin{equation} \label{panda2229}
 \left\{\begin{array}{l}
 \xi_b(t_2^+)= \xi_b(t_1^-),\\
 \Delta \rho_3(t_3^+) \xi_{3}(t_3^+)=\sum_{k\in K(n,t_1^-,t_3,3)} \Delta \rho_k \xi_k.
 \end{array}\right.
 \end{equation}
 From \eqref{panda2229}, a \textbf{\textcolor{blue}{NC1-b)}} interaction has the same effect as wave-wave interations.
 \end{itemize}
  \begin{figure}
   \begin{minipage}[c]{.5\linewidth}
   \centering
      \includegraphics[width=0.6\linewidth]{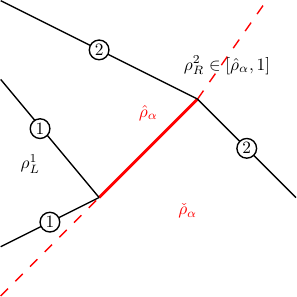}
      \caption*{Case $a)$}  
   \end{minipage} \hfill
   \begin{minipage}[c]{.5\linewidth}
      \includegraphics[width=0.6\linewidth]{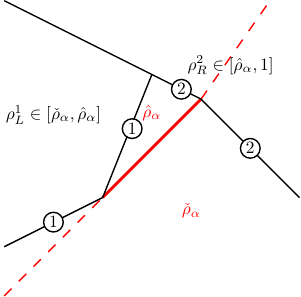}
      \centering
  \caption*{Case $b)$}
   \end{minipage}
 \caption{ \label{CN24} An example of a \textbf{\textcolor{blue}{NC2-a)}} interaction (Case a)) and of a \textbf{\textcolor{blue}{NC2-b)}} interaction (Case b)).}
\end{figure}

 \item[\textbf{\textcolor{blue}{NC2:}}]  A \textbf{\textcolor{blue}{NC2}} interaction is obtained combining Figure \ref{NSright} \textcolor{red}{a)} with Figure \ref{NSleft} \textcolor{red}{b)}. We mimic the proof of the previous case  keeping the same notations and taking in account that $\psi(\rho_L^{2}(t_2^-),\rho_R^{2}(t_2^-))$ may be different of zero (see  Figure \ref{CN24}). From Lemma \ref{dd} and Lemma \ref{ee}, the equalities in \eqref{paaa} hold and  by definition of $t_2$ $\xi_b(t_2^-)=\xi_b(t_1^+)$. 
We distinguish two different cases. By construction, there exists a couple $(k_1,k_2) \in K(n,T)^2$ such that $1\in K(n,t_1^+,T,k_1)=K(n,t_1^-,T,k_1)$ and $2 \in K(n,t_2^+,T,k_2)=K(n,t_2^-,T,k_2)$.
  \begin{itemize}
    \item[\textbf{\textcolor{blue}{NC2-a)}}] $k_1 \neq k_2$ (see Figure \ref{CN24} \textcolor{red}{a)}); roughly speaking the wave $1$ never interacts with the wave $2$. Let $k_0 \in K(n,t_2^+,T,k_1)$ such that $1\in K(n,t_1^+,t_2^+,k_0)$. From  \eqref{paaa}, we have 
\begin{equation} \label{casrare2}
 \left\{\begin{array}{l}
 \xi_b(t_2^+)=(1-\psi(\rho_L^{2}(t_2^-),\rho_R^{2}(t_2^-))) \xi_b(t_1^-)+\frac{\psi(\rho_L^{2}(t_2^-),\rho_R^{2}(t_2^-))}{\Delta \rho_2(t_2^-)}\Delta \rho_2(t_2^-)\xi_2(t_2^-),\\
\Delta \rho_2(t_2^+) \xi_{2}(t_2^+)=\sum_{k\in K(n,t_1^-,t_2^+,2)}\Delta \rho_k \xi_k+(\check{\rho}_\alpha-\hat{\rho}_\alpha)\xi_{b}(t_1^-),\\
\Delta \rho_{k_0}(t_2^+) \xi_{k_0}(t_2^+)=\sum_{k\in K(n,t_1^-,t_2^+,k_0)}\Delta \rho_k \xi_k-(\check{\rho}_\alpha-\hat{\rho}_\alpha)\xi_{b}(t_1^-).\\
 \end{array}\right.
 \end{equation}

\begin{lemma} \label{rare2} A wave coming from the left  cannot interact with the SV trajectory at time $t>t_2$. In particular, a \textbf{\textcolor{blue}{NC2-a)}} interaction occurs at most one time.
  \end{lemma}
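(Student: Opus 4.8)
The plan is to argue by contradiction, following the proof of Lemma~\ref{rare} with the roles of the left and the right of the SV trajectory exchanged. Recall the structure of an \textbf{\textcolor{blue}{NC2-a)}} interaction: the wave~$1$ arrives from the left of the SV at $t_1$ and creates the non classical shock, the wave~$2$ arrives from the right at $t_2$ and cancels it, and $k_1\neq k_2$, where $k_1,k_2\in K(n,T)$ denote the descendants at time $T$ of the continuation of $1$ after $t_1$ and of the continuation of $2$ after $t_2$; these descendants are unique because $K(n,t_1)=\sqcup_{k\in K(n,t_2)}K(n,t_1,t_2,k)$ by Lemma~\ref{entropy}.

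First I would suppose, for contradiction, that some wave $k$ comes from the left of the SV trajectory and meets it at a time $t>t_2$, and I would take $t$ minimal, so that no wave meets the SV on $(t_2,t)$. On that interval the SV carries no non classical shock — it was cancelled at $t_2$, and by Lemma~\ref{test} a new one cannot appear without a wave-SV interaction — so the continuations of $1$ and of $2$ are the classical waves lying closest to the SV, on its left and on its right respectively.

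Next, exactly as in the proof of Lemma~\ref{rare}, I would note that for the left-incoming wave $k$ to reach the SV it must fuse, possibly through a chain of wave-wave interactions, with the continuation of wave~$1$; by Lemma~\ref{123} each such fusion produces a single outgoing wave whose ancestor set contains the two incoming ones, hence $1\prec_n k$. The mirror argument on the right-hand side, with the continuation of wave~$2$ playing the analogous role, gives $2\prec_n k$. Letting $k_\star\in K(n,T)$ be the descendant of $k$, from $1\prec_n k\prec_n k_\star$, $2\prec_n k\prec_n k_\star$ and uniqueness of descendants we obtain $k_\star=k_1$ and $k_\star=k_2$, so $k_1=k_2$, a contradiction. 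Therefore no wave reaches the SV from the left after $t_2$; since an \textbf{\textcolor{blue}{NC2-a)}} interaction is triggered by a wave arriving from the left of the SV trajectory, it can occur at most once.

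The step I expect to be the main obstacle is the derivation of $2\prec_n k$: wave~$2$ lies on the side of the SV opposite to $k$, so this relation cannot be read off from the elementary fusion picture used for $1\prec_n k$; instead it requires the precise bookkeeping of how the cancellation at $t_2$ places the continuation of wave~$2$ relative to the SV trajectory, and it is exactly the left/right transpose of the corresponding delicate point in the proof of Lemma~\ref{rare}, which is why this case can be settled by mimicking the previous one.
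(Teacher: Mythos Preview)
Your overall strategy --- argue by contradiction exactly as in Lemma~\ref{rare}, obtain $1\prec_n k$ and $2\prec_n k$, and conclude $k_1=k_2$ --- is the same as the paper's. However, your geometric picture of the configuration after $t_2$ is wrong, and this is precisely what produces the ``main obstacle'' you flag.

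In an \textbf{\textcolor{blue}{NC2}} interaction the cancellation at $t_2$ is of type Figure~\ref{NSright}~\textcolor{red}{a)}: the incoming wave~$2$ has $\rho_L^{2}(t_2^-)=\check\rho_\alpha$, $\rho_R^{2}(t_2^-)\in[\hat\rho_\alpha,1]$, and after cancelling the non classical shock its continuation is the classical shock $(\hat\rho_\alpha,\rho_R^{2})$. This wave has speed $1-\hat\rho_\alpha-\rho_R^{2}$, which is strictly smaller than the SV speed (equal to $V_b=1-\check\rho_\alpha-\hat\rho_\alpha$ if $\rho_R^{2}\le\rho^*$, and to $1-\rho_R^{2}$ otherwise). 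Hence the continuation of wave~$2$ lies on the \emph{left} of the SV trajectory, not on the right as you assert. This is the exact mirror of \textbf{\textcolor{blue}{NC1-a)}}, where both continuations lie on the right.

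With the geometry corrected, the ``obstacle'' disappears: immediately after $t_2$ the continuations of wave~$1$ and wave~$2$ are both on the left of the SV, with wave~$2$'s continuation the closest to the SV. Since $k_1\neq k_2$, these two lines of descent never merge on $(t_2,T]$. Any wave $k$ reaching the SV from the left at $t>t_2$ must therefore first merge with the descendant of wave~$1$ and then with the descendant of wave~$2$, so by Lemma~\ref{123} both $1\prec_n k$ and $2\prec_n k$ hold, giving $k_1=k_2$ --- the desired contradiction. No additional ``bookkeeping'' is needed; the argument is just the left/right reflection of the proof of Lemma~\ref{rare}.
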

   \begin{proof}
  The proof is obtained by mimicking the proof of Lemma \ref{rare}.
  \end{proof}
 \begin{remark}  \label{rare3}
 From Lemma \ref{rare}, if a  \textcolor{blue}{NC1-a)} interaction occurs over $[t_1,t_2]$, a \textcolor{blue}{NC2-a)} interaction can not happen on $[t_2,T]$. Reciprocally, using Lemma \ref{rare2}, if a  \textcolor{blue}{NC2-a)} interaction occurs over $[t_1,t_2]$, a \textcolor{blue}{NC1-a)} interaction can not happen on $[t_2,T]$. Thus, either  \textcolor{blue}{NC1-a)} interaction or  \textcolor{blue}{NC2-a)} interaction can occur but not both. 
 \end{remark}

 \item[\textbf{\textcolor{blue}{NC2-b)}}] $k_1=k_2$ (see Figure \ref{CN24} \textcolor{red}{b)}); roughly speaking the wave $1$  interacts with the wave $2$. Since $t_2$ is the first time after $t_1$ where a wave interacts with the SV trajectory and $k_1=k_2$, then there exist $t_3 \in(t_2,T]$ and a wave $3 \in K(n,T,t_3^+,k_1)$ such that every wave $k \in K(n,t_1^+,t_3^+,3)\backslash \{2\}$ does not interact with the SV trajectory. Thus, we have \begin{equation} \label{aaa2}\Delta \rho_3(t_3^+) \xi_3(t_3^+)=\Delta \rho_1(t_1^+) \xi_1(t_1^+)+\Delta \rho_2(t_2^+) \xi_2(t_2^+) + \sum_{k\in K(n,t_1^+,t_3^+,3)\backslash \left(K(n,t_1^+,t_2^+,2)\cup\{1\}\right)} \Delta \rho_k \xi_k. \end{equation}
Using \eqref{paaa}, \eqref{aaa2} and the equality $\Delta \rho_2(t_2^-) \xi_2(t_2^-)=\sum_{k\in K(n,t_1^-,t_2^-,2)}\Delta \rho_k \xi_k$, 
 \begin{equation} \label{panda2228}
 \left\{\begin{array}{l}
 \xi_b(t_2^+)=  (1-\psi(\rho_L^{2}(t_2^-),\rho_R^{2}(t_2^-))) \xi_b(t_1^-)+\frac{\psi(\rho_L^{2}(t_2^-),\rho_R^{2}(t_2^-))}{\Delta \rho_2(t_2^-)}\left(\sum_{k\in K(n,t_1^-,t_2^-,2)}\Delta \rho_k \xi_k\right),\\\\
 \Delta \rho_3(t_3^+) \xi_{3}(t_3^+)=\sum_{k\in K(n,t_1^-,t_3^+,3)} \Delta \rho_k \xi_k.
 \end{array}\right.
 \end{equation}


 \end{itemize}



 \begin{figure}
   \begin{minipage}[c]{.5\linewidth}
   \centering
      \includegraphics[width=0.6\linewidth]{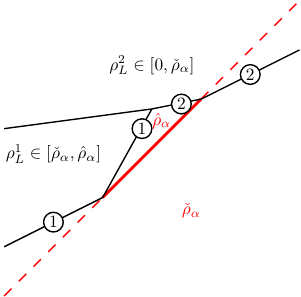}
      \caption*{Case $a)$}  
   \end{minipage} \hfill
   \begin{minipage}[c]{.5\linewidth}
      \includegraphics[width=0.6\linewidth]{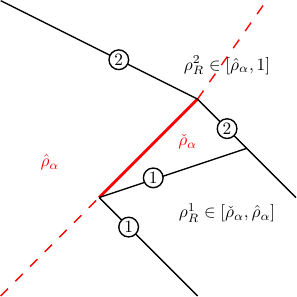}
      \centering
  \caption*{Case $b)$}
   \end{minipage}
 \caption{ \label{CN2} An example of a \textbf{\textcolor{blue}{NC3)}} interaction (Case a)) and of a \textbf{\textcolor{blue}{NC4)}} interaction (Case b)) }
\end{figure}

  
\item[\textbf{\textcolor{blue}{NC3:}}] A \textbf{\textcolor{blue}{NC3}} interaction is obtained combining Figure \ref{NSleft} \textcolor{red}{a)} with Figure \ref{NSleft} \textcolor{red}{b)}. The wave $1$ $(\rho_L^{1}(t_1^-),\rho_R^{1}(t_1^-))$ interacts with the SV trajectory at time $t_1$ creating a non classical shock and a wave $(\rho_L^{1}(t_1^+),\rho_R^{1}(t_1^+))$ which will be  denoted by $1$ as well. The wave $2$ $(\rho_L^{2}(t_2^-),\rho_R^{2}(t_2^-))$ interacts with the SV trajectory at time $t_2$ cancelling a non classical shock and producing a wave $(\rho_L^{2}(t_2^+),\rho_R^{2}(t_2^+))$ which will be called $2$ as well (see Figure \ref{CN2} \textcolor{red}{a)}). In this case, 
$  \rho_L^{1}(t_1^-)=\rho_L^{1}(t_1^+)\in [\check \rho_{\alpha},\hat \rho_{\alpha}]$, $\rho_R^{1}(t_1^-)=\check \rho_{\alpha}$, $\rho_R^{1}(t_1^+)= \hat \rho_{\alpha}$ and $\rho_L^{2}(t_2^-)=\rho_L^{2}(t_2^+)\in [0,\check \rho_{\alpha}]$, $\rho_R^{2}(t_2^-)=\hat \rho_{\alpha}$, $\rho_R^{2}(t_2^+)=\check \rho_{\alpha}$. Since the speed of the SV is not modified ($\psi(\rho_L^{2}(t_2^-),\rho_R^{2}(t_2^-))=0$), from Lemma \ref{dd} and Lemma \ref{ee}, we get
 \begin{equation} \label{panda2213}
 \left\{\begin{array}{l}
 \xi_b(t_2^+)= \xi_b(t_1^-),\\
\Delta \rho_2(t_2^+) \xi_{2}(t_2^+)=\Delta \rho_2(t_2^-) \xi_{2}(t_2^-)+(\check{\rho}_\alpha-\hat{\rho}_\alpha)\xi_{b}(t_1^-),\\
\Delta \rho_1(t_1^+) \xi_{1}(t_1^+)+(\check{\rho}_\alpha-\hat{\rho}_\alpha)\xi_{b}(t_1^-)=\Delta \rho_1(t_1^-) \xi_{1}(t_1^-).\\
 \end{array}\right.
 \end{equation}
By construction, there exists $k_1 \in K(n,T)$ such that $\{1\} \in K(n,t_1^+,T,k_1)$ and  $\{2\} \in K(n,t_2^+,T,k_1)$. Thus, we have 
 \begin{equation} \label{aaa13}\Delta \rho_{2}(t_2^-) \xi_{2}(t_2^-)=\Delta \rho_{1}(t_1^+) \xi_{1}(t_1^+)+\sum_{k\in K(n,t_1^+,t_2^-,2)\backslash \{1\}} \Delta \rho_k \xi_k. \end{equation}
Using \eqref{panda2213}, \eqref{aaa13}, we conclude that 
 \begin{equation} \label{panda2227}
 \left\{\begin{array}{l}
 \xi_b(t_2^+)= \xi_b(t_1^-),\\
 \Delta \rho_2(t_2^+) \xi_{2}(t_2^+)=\sum_{k\in K(n,t_1^-,t_2^+,2)} \Delta \rho_k \xi_k.
 \end{array}\right.
 \end{equation}


 \item[\textbf{\textcolor{blue}{NC4:}}] A \textbf{\textcolor{blue}{NC4}} interaction is obtained combining Figure \ref{NSright} \textcolor{red}{a)} with Figure \ref{NSright} \textcolor{red}{b)} (see Figure \ref{CN2} \textcolor{red}{b)}).
 In this case, 
$  \rho_R^{1}(t_1^-)=\rho_R^{1}(t_1^+)\in [\check \rho_{\alpha},\hat \rho_{\alpha}]$, $\rho_L^{1}(t_1^-)=\hat \rho_{\alpha}$, $\rho_L^{1}(t_1^+)= \check \rho_{\alpha}$ and $\rho_R^{2}(t_2^-)=\rho_R^{2}(t_2^+)\in [\hat \rho_{\alpha},1]$, $\rho_L^{2}(t_2^-)=\check \rho_{\alpha}$, $\rho_R^{2}(t_2^+)=\hat \rho_{\alpha}$.  Moreover, $\psi(\rho_L^{2}(t_2^-),\rho_R^{2}(t_2^-))$ may be different of zero. From Lemma \ref{dd} and Lemma \ref{ee},  the equalities in \eqref{paaa} hold and,  by definition of $t_2$, $\xi_b(t_2^-)=\xi_b(t_1^+)$. By construction, there exists $k_1 \in K(n,T)$ such that $\{1\} \in K(n,t_1^+,T,k_1)$ and  $\{2\} \in K(n,t_2^+,T,k_1)$. Thus, we have  \begin{equation} \label{aaa00}\Delta \rho_{2}(t_2^-) \xi_{2}(t_2^-)=\Delta \rho_{1}(t_1^+) \xi_{1}(t_1^+)+\sum_{k\in K(n,t_1^+,t_2^-,2)\backslash \{1\}} \Delta \rho_k \xi_k. \end{equation}
 Using \eqref{paaa} and \eqref{aaa00}, we conclude that
 \begin{equation} \label{panda2226}
 \left\{\begin{array}{l}
 \xi_b(t_2^+)=  \left(1-\psi(\rho_L^{2}(t_2^-),\rho_R^{2}(t_2^-))+\frac{\psi(\rho_L^{2}(t_2^-),\rho_R^{2}(t_2^-))(\hat{\rho}_\alpha-\check{\rho}_\alpha)}{\Delta \rho_2(t_2^-)}\right) \xi_b(t_1^-), \\ \quad \quad\quad+\frac{\psi(\rho_L^{2}(t_2^-),\rho_R^{2}(t_2^-))}{\Delta \rho_2(t_2^-)}\left(\sum_{k\in K(n,t_1^-,t_2^-,2)}\Delta \rho_k \xi_k\right),\\
 \Delta \rho_2(t_2^+) \xi_{2}(t_2^+)=\sum_{k\in K(n,t_1^-,t_2^+,2)} \Delta \rho_k \xi_k.
 \end{array}\right.
 \end{equation}

\begin{lemma}\label{finite}
Let $t\in [0,t_1^-)$, a wave $k$ with $k\in K(n,t,t_1^-,1)$ can not interact with the SV trajectory  at time $t$.
\end{lemma}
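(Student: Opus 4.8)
The plan is to argue by contradiction, in the spirit of the proof of Lemma \ref{rare}. Assume that some $k\in K(n,t,t_1^-,1)$ meets the SV trajectory at a time $t\in[0,t_1)$, and take $t$ maximal with this property. In the \textbf{\textcolor{blue}{NC4}} configuration wave $1$ is the wave coming from the right in Figure \ref{NSright} b): for $s<t_1$ close to $t_1$ the state immediately to the right of $y(s)$ equals $\rho_L^1(s)=\hat\rho_\alpha$ and wave $1$ is the discontinuity of $\rho^{1,n}(s,\cdot)$ closest to the SV in $\{x>y(s)\}$, so in particular no wave lies between $y(t_1^-)$ and wave $1$. Since wave--wave interactions occur strictly away from the SV trajectory (Definition \ref{class}), the lineage of wave $1$ can meet $y(\cdot)$ only at wave--SV interactions; hence $k$ reaches $y(\cdot)$ from the right, the interaction at time $t$ emits an outgoing wave $k'$ into $\{x>y(\cdot)\}$ belonging to the lineage of wave $1$, and by maximality of $t$ no descendant of $k'$ in that lineage meets $y(\cdot)$ on $(t,t_1)$.

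I would then track the state immediately to the right of the SV trajectory backwards from $t_1$: it equals $\hat\rho_\alpha$ at $t_1^-$ and is constant between consecutive wave--SV interactions. Inspecting the constrained Riemann solver $\mathcal{R}^{\alpha}$ (Figures \ref{modbus}, \ref{NSright}, \ref{NSleft}, \ref{left1}), whenever a wave--SV interaction emits a wave to the right of the SV the state then located immediately to its right is either $\check\rho_\alpha$ (Case 1 of $\mathcal{R}^{\alpha}$), or a value $\rho>\hat\rho_\alpha$ (Case 1 with the outgoing $(\check\rho_\alpha,\cdot)$--shock remaining on the left of $y(\cdot)$, or Case 3 where it equals $\rho_R>\rho^*$); it equals $\hat\rho_\alpha$ only when the emitted wave is a shock $(\hat\rho_\alpha,\cdot)$ produced by the cancellation of a non classical shock, which forces a non classical shock to sit at $y(\cdot)$ immediately before that interaction. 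But no non classical shock sits at $y(\cdot)$ on $(t,t_1)$: wave $1$ \emph{creates} one at $t_1$, and by Lemma \ref{test} any earlier non classical shock would have been cancelled at a wave--SV interaction before $t_1$. Hence the state right of $y(\cdot)$ cannot have become $\hat\rho_\alpha$ at the interaction at time $t$, nor at any wave--SV interaction on $(t,t_1)$; it must already equal $\hat\rho_\alpha$ at time $t^+$, so no wave is emitted to the right at time $t$, contradicting the construction of $k'$.

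Iterating, no wave--SV interaction touching the lineage of wave $1$ occurs on $[0,t_1)$, the value $\hat\rho_\alpha$ to the right of $y(\cdot)$ is inherited from $\rho_0^{1,n}$, and at $t=0$ the SV lies strictly to the left of every ancestor of wave $1$, which therefore cannot be resolved against the SV at $t=0$ either; this contradicts the existence of $k$ and proves the lemma. I expect the main obstacle to be the second paragraph: one must control every outgoing configuration of $\mathcal{R}^{\alpha}$ and, using Lemma \ref{test} together with the finiteness of the wave-front tracking configuration, exclude that a chain of creations and cancellations of non classical shocks involving waves \emph{outside} the lineage of wave $1$ resets the state right of the SV to $\hat\rho_\alpha$ and then feeds a leftmost wave into wave $1$ before $t_1$ --- here the absence, noted above, of any wave between $y(t_1^-)$ and wave $1$ is what is used.
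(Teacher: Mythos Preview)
Your setup --- argue by contradiction and take the \emph{maximal} time $t_0$ at which an ancestor of wave $1$ touches the SV --- is exactly the paper's. The divergence is in the next step, and it matters.

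You assert that $k$ reaches $y(\cdot)$ from the right. This is wrong, and in fact the paper shows the opposite. The outgoing wave must land in $\{x>y(\cdot)\}$ (otherwise it cannot flow into wave $1$, which sits to the right of the SV throughout $(t_0,t_1)$), so the interaction at $t_0$ must be of type Figure~\ref{NSleft}~a) or Figure~\ref{left1}: the incoming wave comes from the \emph{left}. (Figure~\ref{NSleft}~b) is excluded because its outgoing wave goes left.) In each admissible case the outgoing wave has $\rho_L^{k}(t_0^+)\in[0,\check\rho_\alpha]$.

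From here the paper's argument is much shorter than the Riemann-solver inspection you sketch. There is no need to enumerate every way the state to the right of the SV can become $\hat\rho_\alpha$, nor to track chains of non-classical shocks on $(t_0,t_1)$; that is precisely the part you flag as the main obstacle, and it is avoidable. The point is simply: at $t_0^+$ the wave $k$ sits immediately to the right of the SV, so nothing lies between them. Any wave hitting the lineage $k\prec_n k'\prec_n\cdots\prec_n 1$ from the left on $(t_0,t_1)$ would have to be born from a later wave--SV interaction, and would therefore itself be an ancestor of $1$ meeting the SV after $t_0$ --- contradicting maximality of $t_0$. Hence the left state of this lineage is frozen, $\rho_L^{k'}(s)=\rho_L^{k}(t_0^+)\in[0,\check\rho_\alpha]$ for all $s\in(t_0,t_1)$, which is incompatible with $\rho_L^{1}(t_1^-)=\hat\rho_\alpha$.

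So the genuine gap in your proposal is the second paragraph: the case analysis of $\mathcal R^\alpha$ together with the exclusion of NC-shock creations and cancellations by waves outside the lineage of $1$ is both the hard part and unnecessary. One line --- maximality of $t_0$ forbids anything entering the lineage of $1$ from the left on $(t_0,t_1)$ --- already forces the left state of that lineage to stay in $[0,\check\rho_\alpha]$ and yields the contradiction. Once you drop the incorrect claim about the side from which $k$ arrives and replace the state-tracking by this observation, your sketch collapses to the paper's proof.
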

\begin{proof}
By contradiction, assume that some waves $k$ with $k \prec_n 1$ interact with the SV trajectory on $(0,t_1^-)$. Let $k$ be such that the time $t_0$ is the maximal interaction time on $(0,t_1^-)$ with the SV trajectory. Then, 	necessarily, the interaction is from the left and it is of type Figure  \ref{NSleft} \textcolor{red}{a)} and Figure  \ref{left1} (Figure  \ref{NSleft} \textcolor{red}{b)}   being excluded). Thus, $\rho_L^k(t_0^-)=\rho_L^k(t_0^+) \in [0,\check \rho_{\alpha}]$. Moreover, no wave can interact from the left with $k$ or any $k'$ verifying $k \prec_n k'$ on $(t_0,t_1^-)$, otherwise $t_0$ would not be maximal. This implies that $k' \succ_n k$ and for $t\in (t_0^+,t_1^-)$, $\rho_L^{k'}(t)=\rho_L^k(t_0^+) \in [0,\check \rho_{\alpha}]$ but this contradicts with $\rho_L^1(t_1^-)=\hat \rho_{\alpha}$.

\end{proof}



\end{itemize}
\textbf{Conclusion.} 
\begin{itemize}
\item Using Lemma \ref{rare}, Lemma \ref{rare2} and Remark \ref{rare3}, we conclude that  either  a \textbf{\textcolor{blue}{NC1-a)}} interaction or a \textbf{\textcolor{blue}{NC2-a)}} interaction can occur but not both. Morever they  happen at most one time and the evolution of tangent vectors is described in \eqref{casrare2}. 
\item From \eqref{panda2229} and \eqref{panda2227}, a  \textbf{\textcolor{blue}{NC1-b)}} interaction and a \textbf{\textcolor{blue}{NC3}} interaction have the same effect as  multiple wave-wave interactions (where the evolution of tangent vectors is described in Lemma \ref{2W}).
\item Using \eqref{panda2228} and $\xi_b(t_1^-)=\xi_b(t_2^-)$, a  \textbf{\textcolor{blue}{NC2-b)}} interaction has the same effect  as a classical wave-SV interaction at time $t_2$ (where the evolution of tangents vectors is described in Lemma \ref{cc}) with multiple wave-wave interactions (where the evolution of tangent vectors is described in Lemma \ref{2W}). 
\item Notice that we may have many interactions of type \textbf{\textcolor{blue}{NC4)}}. But, thanks to Lemma \ref{finite} the involved waves do not interact in the following sense: if $k_1$ and $k_2$ are involved in the creation of two different  \textbf{\textcolor{blue}{NC4)}} types then $k_1 \prec_n k_2$ and $k_2 \prec_n k_1$ can not hold true (the evolution of tangent vectors is described in \eqref{panda2226}).
\end{itemize}

\subsubsection{Proof of Lemma \ref{ffff}} \label{sectionsection}

\begin{itemize}
\item[A)] If a non classical wave is created  by a wave $k$ at time $t=t_1$ and it is cancelled at time $t_2>T$, then, using Lemma \ref{dd},
$$ \vert \Delta \rho_k(T) \xi_{k}(T)\vert+\vert \xi_b(T) \vert   \leq \vert \Delta \rho_k(t_1) \xi_{k}(t_1)\vert +(1+\hat{\rho}_\alpha-\check{\rho}_\alpha)\vert \xi_{b}(t_1)\vert.$$
\item[B)] If a non classical wave in the initial datum is cancelled  by a wave $k$ at time $t=t_2$ then, using Lemma \ref{dd},
$$ \vert \Delta \rho_k(t_2) \xi_{k}(t_2)\vert+\vert \xi_b(t_2) \vert   \leq \vert \Delta \rho_k(0) \xi_{k}(0)\vert +(1+\hat{\rho}_\alpha-\check{\rho}_\alpha)\vert \xi_{b}(0)\vert.$$
\end{itemize}
\begin{itemize}
 \item The cases A) and B) may happen at most one time and  $(1+\check{\rho}_\alpha-\hat{\rho}_\alpha)<\infty$. Thus, the weight functions $W_k^n(0)$ and $W_b^n(0)$ defined in Lemma \ref{ffff}  can not blow up because of   interactions $A) $ or   $B)$.  
 \item From Lemma \ref{rare} and Lemma \ref{rare2}, either  a \textbf{\textcolor{blue}{NC1-a)}} interaction or a \textbf{\textcolor{blue}{NC2-a)}}  interaction may occur at most one time. Moreover, $\hat{\rho}_\alpha-\check{\rho}_\alpha<\infty$ and from Lemma \ref{bababa}, \\ 
 $$\displaystyle{(1-\psi(\rho_L^{2}(t_2^-),\rho_R^{2}(t_2^-)))<\infty} \quad \text{and} \quad\frac{\psi(\rho_L^{2}(t_2^-),\rho_R^{2}(t_2^-))}{\Delta \rho_2(t_2^-)}<\infty.$$ Thus, the weight function $W_k^n(0)$ and $W_b^n(0)$ defined in Lemma \ref{ffff}  can not blow up because of  \textbf{\textcolor{blue}{NC1-a)}} interactions or  \textbf{\textcolor{blue}{NC2-a)}} interactions. 
 \item  \textbf{\textcolor{blue}{NC1-b)}} interactions and \textbf{\textcolor{blue}{NC3}} interactions produce tangent vector increases as   multiple wave-wave interactions. 
 \item  \textbf{\textcolor{blue}{NC2-b)}} interactions have the same effect as a classical wave-SV interaction at time $t_2$ (where the evolution of tangents vectors is described in Lemma \ref{cc}) with multiple wave-wave interactions (where the evolution of tangents vectors is described in Lemma \ref{2W}).
 \item The  evolution of the SV tangent vector for a \textbf{\textcolor{blue}{NC4}} interaction   is described in \eqref{panda2226}. In \eqref{panda2226}, $\Delta \rho_2(t_2^-)= \rho_R^{2}(t_2^-)-\rho_L^{2}(t_2^-) \leq \hat \rho_{\alpha}- \check \rho_{\alpha}$. Thus,  $$\displaystyle{\left(1-\psi(\rho_L^{2}(t_2^-),\rho_R^{2}(t_2^-))+\frac{\psi(\rho_L^{2}(t_2^-),\rho_R^{2}(t_2^-))(\hat{\rho}_\alpha-\check{\rho}_\alpha)}{\Delta \rho_2(t_2^-)}\right)\leq 1}.$$ We conclude that $\xi_b(t_2^+) \leq \xi_b(t_1^-)$, that is to say a \textbf{\textcolor{blue}{NC4}} interaction does not increase the value of the SV tangent vector $\xi_b$ over time.  From Lemma \ref{finite}, Lemma \ref{bababa} and \eqref{panda2226}
 $$ \begin{array}{ll} \Delta \rho_2(t_2^+) \xi_{2}(t_2^+) &= \frac{\psi(\rho_L^{2}(t_2^-),\rho_R^{2}(t_2^-))}{\Delta \rho_2(t_2^-)}\left(\sum_{k\in K(n,0,t_2^-,2)}\Delta \rho_k(0) \xi_k(0)\right) \\
 &<\frac{2}{\rho^*}\left(\sum_{k\in K(n,0,t_2^-,2)}\Delta \rho_k(0) \xi_k(0)\right),
 \end{array}
 $$
 where the notation are described in Lemma \ref{finite}. Thus, the weight function $W_k^n(0)$ and $W_b^n(0)$ defined in Lemma \ref{ffff}  can not blow up because of  \textbf{\textcolor{blue}{NC4}} interactions.
  \end{itemize}
  Our goal is to prove that the weight functions $W_k^n(0)$ and $W_b^n(0)$, defined in Lemma \ref{ffff} can not blow up when $n$ tends to infinity.  Thus, we need only consider the interactions which may blow  the weight functions $W_k^n(0)$ and $W_b^n(0)$ up. More precisely, we only take into account the simple wave-SV interactions, the wave-wave interactions, the  \textbf{\textcolor{blue}{NC1-b)}} interactions, \textbf{\textcolor{blue}{NC2-b)}} interactions and the \textbf{\textcolor{blue}{NC3}} interactions.
\begin{lemma} \label{coco} Let $t_1,t_2 \in \R_+^*$ such that $t_1<t_2$. We assume that only the following interactions may occur
\begin{itemize}
\item  multiple wave-wave interactions (the evolution of tangent vectors is described in Lemma \ref{bb}, Figure \ref{2W}),
\item  wave-SV interactions (the evolution of tangent vectors is described in Lemma \ref{cc}, Figure \ref{modbus} and Figure \ref{left1}),
\item the \textbf{\textcolor{blue}{NC1-b)}} interactions (the evolution of tangent vectors is described in \eqref{panda2229}, Figure \ref{CN3} \textcolor{red}{b)})
\item the  \textbf{\textcolor{blue}{NC2-b)}} interactions (the evolution of tangent vectors is described in \eqref{panda2228}, Figure \ref{CN24} \textcolor{red}{b)}),
\item  the  \textbf{\textcolor{blue}{NC3}} interactions  (the evolution of tangent vectors is described in \eqref{panda2227}, Figure \ref{CN2} \textcolor{red}{a)}),
\end{itemize}
 Then there exist $W_b(t_1,t_2) \in \R_+^*$ and  $W_k(t_1,t_2)\in  \R_+^*$  for $k\in K(n,t_1)$ such that 
\begin{equation} \label{ffffefe}
\left\{\begin{array}{l}
\xi_b(t_2)=W_b(t_1,t_2) \xi_b(t_1)+ \sum_{k \in K(n,t_1)} W_k(t_1,t_2) \Delta  \rho_k \xi_k,\\
\Delta \rho_j(t_2) \xi_j(t_2)=\sum_{k \in K(n,t_1,t_2,j)} \Delta \rho_k \xi_k, \quad \text{for every} \, \, j\in K(n,t_2).  \\
\end{array}\right.
\end{equation}
Moreover, we have
 \begin{equation} \label{vital} \max(\vert W_b(t_1,t_2) \vert ,\vert W_k(t_1,t_2)\vert)<\frac{2\exp\left(\frac{3TV(\rho_0)}{\rho^*}\right)}{\rho^*}. \end{equation}
\end{lemma}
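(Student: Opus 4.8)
\emph{Proof plan.} The plan is to build the weights $W_b(t_1,t_2)$ and $W_k(t_1,t_2)$ explicitly, by composing the one‑step update rules already available, and then to bound the resulting product (for $W_b$) and series (for $W_k$) with Lemma~\ref{bababa}. First I would reduce to two interaction types only: by \eqref{panda2229} and \eqref{panda2227} a \textbf{\textcolor{blue}{NC1-b)}} or \textbf{\textcolor{blue}{NC3}} interaction changes the tangent vectors exactly like a sequence of wave--wave interactions, and by \eqref{panda2228} an \textbf{\textcolor{blue}{NC2-b)}} interaction acts like one wave--SV interaction (Lemma~\ref{cc}) composed with wave--wave interactions; so over $(t_1,t_2)$ I may assume every interaction is either a wave--wave interaction (Lemma~\ref{bb}) or a wave--SV interaction (Lemma~\ref{cc}). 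The second identity in \eqref{ffffefe} then follows by induction on the interaction times: Lemma~\ref{bb} gives $\Delta\rho_3\xi_3=\Delta\rho_1\xi_1+\Delta\rho_2\xi_2$ while $K$ merges $\{1,2\}$ into $\{3\}$, and Lemma~\ref{cc} leaves both the product $\Delta\rho_k\xi_k$ of the crossing wave and the sets $K(n,\cdot)$ unchanged, so the ancestor decomposition is preserved from $t_1$ to $t_2$. For the first identity, $\xi_b$ is untouched at wave--wave interactions, and at each wave--SV interaction $\sigma_i$ with crossing wave $k_i$ Lemma~\ref{cc} gives $\xi_b(\sigma_i^+)=(1-\psi_i)\xi_b(\sigma_i^-)+\tfrac{\psi_i}{\Delta\rho_{k_i}(\sigma_i^-)}\sum_{j\in K(n,t_1,\sigma_i^-,k_i)}\Delta\rho_j(t_1)\xi_j(t_1)$, with $\psi_i:=\psi(\rho_L^{k_i}(\sigma_i^-),\rho_R^{k_i}(\sigma_i^-))$; composing over the wave--SV interactions $\sigma_1<\dots<\sigma_p$ of $(t_1,t_2)$ yields \eqref{ffffefe} with $W_b(t_1,t_2)=\prod_{l=1}^{p}(1-\psi_l)$ and $W_k(t_1,t_2)=\sum_{i:\,k\in K(n,t_1,\sigma_i^-,k_i)}\big(\prod_{l>i}(1-\psi_l)\big)\tfrac{\psi_i}{\Delta\rho_{k_i}(\sigma_i^-)}$. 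By \eqref{ll} one checks $1-\psi_l\geq0$ and $\tfrac{\psi_i}{\Delta\rho_{k_i}}\geq0$, so these weights are non‑negative and can be made strictly positive by an arbitrarily small perturbation, landing in $\R_+^*$.

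For $W_b$, item~\textbf{A} of Lemma~\ref{bababa} gives $|1-\psi_l|\leq 1+\tfrac{3}{2^{n+1}\rho^*}$ whenever $\psi_l\neq0$, and a factor can exceed $1$ only when $\psi_l<0$, i.e.\ for an interaction of Figure~\ref{modbus}~Case~a): a rarefaction front crossing the SV from the right with $\rho^*\leq\rho_R<\rho_L$. Rarefaction fronts are created only at $t=0$, so there are at most $2^{n+1}TV(\rho_0)$ of them (each has strength at least $2^{-n-1}$ and their total strength is $\leq TV(\rho_0^{i,n})\leq TV(\rho_0)$); moreover such a front has speed $1-\rho_L-\rho_R<1-\rho^*=V_b$, hence once it has crossed the SV it stays behind it and never crosses from the right again. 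So at most $2^{n+1}TV(\rho_0)$ factors of $W_b$ exceed $1$, and $|W_b(t_1,t_2)|\leq\big(1+\tfrac{3}{2^{n+1}\rho^*}\big)^{2^{n+1}TV(\rho_0)}\leq\exp\!\big(3TV(\rho_0)/\rho^*\big)<\tfrac{2\exp(3TV(\rho_0)/\rho^*)}{\rho^*}$.

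For $W_k$, fix $k\in K(n,t_1)$ and follow its unique descendant $t\mapsto k(t)$ (uniqueness by Lemma~\ref{entropy}); the interactions contributing to $W_k$ are exactly the times $\tau_1<\dots<\tau_r$ at which $k(\cdot)$ meets the SV. Pulling the factors $|1-\psi_l|>1$ out of the products $\prod_{\sigma_l>\tau_s}(1-\psi_l)$ costs at most $\exp(3TV(\rho_0)/\rho^*)$ by the counting above, so it remains to bound $\sum_{s}\big(\prod_{\sigma_l>\tau_s,\,|1-\psi_l|\leq1}|1-\psi_l|\big)\,\big|\tfrac{\psi_{\tau_s}}{\Delta\rho_{k(\tau_s)}}\big|$ by $\tfrac{2}{\rho^*}$. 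The crossings of $k(\cdot)$ with $\rho_L\in[0,\check\rho_\alpha]$ (Figure~\ref{left1} and Figure~\ref{modbus}~Case~b) with small $\rho_L$) are the ones that may recur arbitrarily often; for these item~\textbf{C}, $|\tfrac{\psi}{\Delta\rho}|+|1-\psi|\tfrac{2}{\rho^*}\leq\tfrac{2}{\rho^*}$, makes the sum telescope to $\tfrac{2}{\rho^*}$ when processed backwards from $t_2$ (using $|1-\psi_l|\leq1$ for the interspersed interactions). The remaining wave--SV crossings of $k(\cdot)$, those with $\rho_L>\check\rho_\alpha$, cannot recur once crossed (their speed is $<V_b$), hence are controlled crudely by items~\textbf{A} and~\textbf{B} and absorbed into the exponential factor. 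Collecting the estimates gives $|W_k(t_1,t_2)|<\tfrac{2\exp(3TV(\rho_0)/\rho^*)}{\rho^*}$, i.e.\ \eqref{vital}.

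The hard part will be the bound on $W_k$ in the last paragraph: since the descendant $k(\cdot)$ may meet the SV an unbounded number of times as $n\to\infty$, $W_k$ is a genuinely infinite series in the limit, and its uniform boundedness rests entirely on the sharp ``super‑contraction'' estimate~\textbf{C} of Lemma~\ref{bababa}, which is precisely what makes the backward telescoping close with the fixed constant $2/\rho^*$ instead of diverging; controlling exactly which crossings are recurrent (and hence must be handled by~\textbf{C} rather than by~\textbf{A}, \textbf{B}) is the delicate bookkeeping point flagged in the remark following Lemma~\ref{ffff}.
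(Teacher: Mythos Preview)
Your approach is essentially the one the paper takes: reduce to wave--wave and wave--SV interactions, write $W_b$ as a product of $(1-\psi_l)$ and $W_k$ as the series $\sum_{i}\bigl(\prod_{l>i}(1-\psi_l)\bigr)\psi_i/\Delta\rho_{k_i}$ over the SV--crossings of the descendant of $k$, and then bound the series by a backward telescoping based on item~\textbf{C} of Lemma~\ref{bababa}. Your counting for $W_b$ (only Figure~\ref{modbus} Case~a) rarefactions give a factor $>1$, and each can contribute at most once) is a clean variant of the paper's bound.

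The one genuine gap is in the last step for $W_k$. Item~\textbf{C} applies only when $\rho_L\in[0,\check\rho_\alpha]$; you assert that the crossings of $k(\cdot)$ with $\rho_L>\check\rho_\alpha$ ``cannot recur once crossed (their speed is $<V_b$)''. This speed argument is not sufficient: after the crossing, the descendant $k(\cdot)$ will typically merge with other waves, and the merged wave may have any speed, so nothing prevents the descendant from meeting the SV again from the right with $\rho_L\in[\hat\rho_\alpha,\rho_R]$ (which would break the telescoping since item~\textbf{C} fails there). What you need is a structural fact that the paper isolates as Lemma~\ref{pp}: if two speed--modifying SV--crossings occur along the same ancestor chain (i.e.\ the earlier wave is an ancestor of the later one), then the later one necessarily has $\rho_L\in(0,\check\rho_\alpha)$. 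The proof is not about speeds; it traces the left state of the descendant between the two crossings and uses that any intermediate crossing from the left must be of Figure~\ref{left1} (or Figure~\ref{NSleft}~a)), which forces the left state into $[0,\check\rho_\alpha]$ and this value is then carried unchanged to the next crossing from the right. Once Lemma~\ref{pp} is in place, at most the \emph{first} speed--modifying crossing of $k(\cdot)$ can have $\rho_L\notin[0,\check\rho_\alpha]$; you start the backward induction there with item~\textbf{B} ($|\psi/\Delta\rho|\leq 2/\rho^*$) and then every subsequent step uses item~\textbf{C}, closing the telescoping at $2/\rho^*$ exactly as you intend.
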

\begin{figure}
  \centering
      \includegraphics[width=0.5\linewidth]{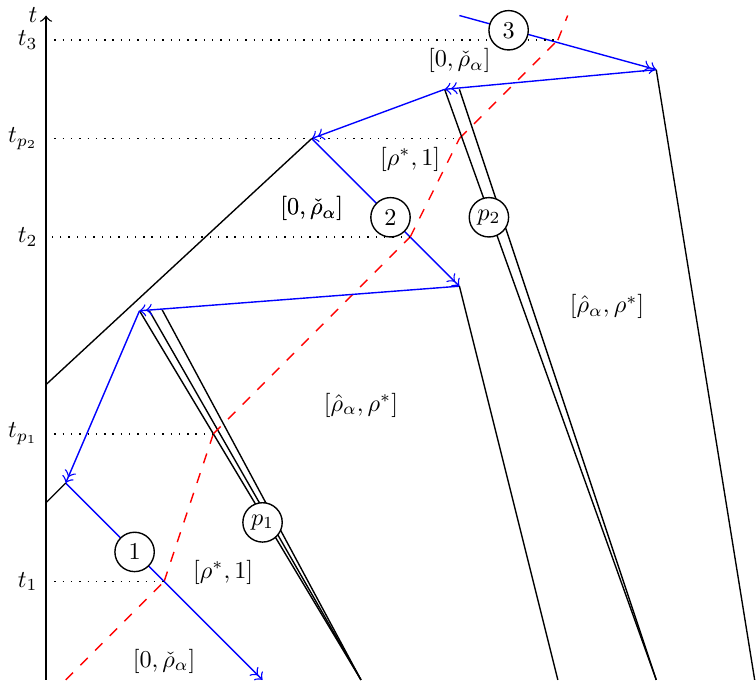}
   \caption{ \label{2cl}  Interactions with  \textcolor{red}{the SV trajectory} (\textcolor{red}{$\cdots$}). }
\end{figure}
The proof is postponed in Appendix \ref{explicit}. The  following computations of the evolution of tangent vectors  in the case of Figure \ref{2cl} highlights the main ideas of the proof of Lemma  \ref{coco}.  \\ 
 \ \\
\textbf{Example:} We consider the particular case presented in Figure \ref{2cl}.  To simplify the notations,  $\psi(\rho_L^{i}(t_i^-),\rho_R^{i}(t_i^-))$ will be denoted by $\psi^i$.
From Lemma \ref{cc},
\begin{equation} \label{qwdqwd22}
\left\{ \begin{array}{l}\xi_b(t_3^+)=(1-\psi^3)\xi_b(t_3^-)+\frac{\psi^3}{\Delta \rho_3(t_3^-)}\Delta \rho_3(t_3^-)\xi_{3}(t_3^-),\\
\Delta \rho_3(t_3^+) \xi_2(t_3^+)=\Delta \rho_3(t_3^-) \xi_3(t_3^-), \\ 
\xi_b(t_{p_2}^+)=(1-\psi^{p_2})\xi_b(t_{p_2}^-)+\frac{\psi^{p_2}}{\Delta \rho_{p_2}(t_{p_2}^-)}\Delta \rho_{p_2}(t_{p_2}^-) \xi_{1}(t_{p_2}^-),\\
\Delta \rho_{p_2}(t_{p_2}^+) \xi_{p_2}(t_{p_2}^+)=\Delta \rho_{p_2}(t_{p_2}^-) \xi_{p_2}(t_{p_2}^-). \\ \end{array} \right.
\end{equation}
Since $\xi_b(t_3^-)=\xi_b(t_{p_2}^+)$ and $\Delta \rho_3(t_3^-) \xi_3(t_3^-)=\Delta \rho_{p_2}(t_{p_2}^+) \xi_{p_2}(t_{p_2}^+)+\sum_{k\in K(n,t_{p_2}^+,t_3^-,3)\backslash \{p_2^+\}} \Delta \rho_k \xi_k$, we have 
\begin{equation} \label{qwdqwd33}
\left\{ \begin{array}{l}\xi_b(t_3^+)=(1-\psi^3)(1-\psi^{p_2})\xi_b(t_{p_2}^-)+ 
\left(\frac{\psi^3}{\Delta \rho_3(t_3^-)}+(1-\psi^3)\frac{\psi^{p_2}}{\Delta \rho_{p_2}(t_{p_2}^-)}
\right)\Delta \rho_{p_2}(t_{p_2}^-)\xi_{{p_2}}(t_{p_2}^-)\\
\quad \quad \quad+ \frac{\psi^3}{\Delta \rho_{3}(t_{3}^-)}\sum_{k\in K(n,t_{p_2}^-,t_3^+,3)\backslash \{p_2^+\}} \Delta \rho_k \xi_k,\\
\Delta \rho_{3}(t_{3}^+) \xi_{3}(t_{3}^+)=\sum_{k\in K(n,t_{p_2}^-,t_3^+,3)} \Delta \rho_k \xi_k.
\end{array} \right.
\end{equation}
 From Lemma \ref{cc},
\begin{equation} \label{qwdqwd44}
\left\{ \begin{array}{l}\xi_b(t_2^+)=(1-\psi^2)\xi_b(t_2^-)+\frac{\psi^2}{\Delta \rho_2(t_2^-)}\Delta \rho_2(t_2^-)\xi_{2}(t_2^-),\\
\Delta \rho_2(t_2^+) \xi_2(t_2^+)=\Delta \rho_2(t_2^-) \xi_2(t_2^-). \\ 
  \end{array} \right.
\end{equation}

Using $\xi_b({p_2}^-)=\xi_b(t_2^+)$, $\Delta \rho_{p_2}(t_{p_2}^-) \xi_{p_2}(t_{p_2}^-)=\Delta \rho_{2}(t_{2}^+) \xi_{2}(t_{2}^+)+\sum_{k\in K(n,t_{2}^+,t_{p_2}^-,{p_2})\backslash \{2^+\}} \Delta \rho_k \xi_k$, \eqref{qwdqwd33} and \eqref{qwdqwd44}, we have \small
\begin{equation*} 
\left\{ \begin{array}{l}\xi_b(t_3^+)=(1-\psi^3)(1-\psi^{p_2})(1-\psi^{2})\xi_b(t_{2}^-)+
\left(\frac{\psi^3}{\Delta \rho_3(t_3^-)}+ (1-\psi^3)\frac{\psi^{p_2}}{\Delta \rho_{p_2}(t_{p_2}^-)}
\right)\Delta \rho_{p_2}(t_{2}^-)\xi_{{p_2}}(t_{2}^-)\\
\quad \quad+\left(\frac{\psi^3}{\Delta \rho_3(t_3^-)}+ (1-\psi^3)(1-\psi^{p_2})\frac{\psi^{2}}{\Delta \rho_{2}(t_{2}^-)}
\right)\Delta \rho_{2}(t_{2}^-)\xi_{{2}}(t_{2}^-)+ \frac{\psi^{p_2}}{\Delta \rho_{p_2}(t_{p_2}^-)}\sum_{k\in K(n,t_{2}^-,t_3^+,3)\backslash \{p_2^+,2\}} \Delta \rho_k \xi_k,\\
\Delta \rho_{3}(t_{3}^+) \xi_{3}(t_{3}^+)=\sum_{k\in K(n,t_{2}^-,t_3^+,3)} \Delta \rho_k \xi_k.
\end{array} \right.
\end{equation*}
\normalsize
From Lemma \ref{cc},
\begin{equation} \label{qwdqwd77}
\left\{ \begin{array}{l}\xi_b(t_{p_1}^+)=(1-\psi^{p_1})\xi_b(t_{p_1}^-)+\frac{\psi^{p_1}}{\Delta \rho_{p_1}(t_{p_1}^-)}\Delta \rho_{p_1}(t_{p_1}^-)\xi_{p_1}(t_{p_1}^-),\\
\Delta \rho_{p_1}(t_{p_1}^+) \xi_{p_1}(t_{p_1}^+)=\Delta \rho_{p_1}(t_{p_1}^-) \xi_{p_1}(t_{p_1}^-), \\ 
\xi_b(t_{1}^+)=(1-\psi^{1})\xi_b(t_{1}^-)+\frac{\psi^{1}}{\Delta \rho_{1}(t_{1}^-)}\Delta \rho_{1}(t_{1}^-)\xi_{1}(t_{1}^-),\\
\Delta \rho_{1}(t_{1}^+) \xi_{1}(t_{1}^+)=\Delta \rho_{1}(t_{1}^-) \xi_{1}(t_{1}^-). \\ 
  \end{array} \right.
\end{equation}
By straightforward computations, we conclude that

\begin{equation*}
\left\{ \begin{array}{l}\xi_b(t_3^+)=W_b^n(0)\xi_b(t_{1}^-)+\sum_{k\in K(n,t_1^-,3,t_3^+)\backslash \left(\{p_2\} \cup K(n,t_1^-,2,t_2^+) \right)}W_k^n(0)\Delta \rho_k \xi_k+  W^n_{p_2}(0) \rho_{p_2}(0)\xi_{{p_2}}(0)\\+\sum_{k\in K(n,t_1^-,2,t_2^+)\backslash \{p_1 \cup 1\}}W_k^n(0)\Delta \rho_k \xi_k+W^n_{p_1}(0) \Delta\rho_{p_1}(0)\xi_{{p_1}}(0)+W_1^n(0)\Delta \rho_1(0)\xi_{{1}}(0),\\
\Delta \rho_{3}(t_{3}^+) \xi_{3}(t_{3}^+)=\sum_{k\in K(n,t_{1}^-,t_3^+,3)} \Delta \rho_k \xi_k,
\end{array} \right.
\end{equation*}
 with \begin{equation}\label{lolo}\left\{ \begin{array}{l}
W_b^n(0)=(1-\psi^3)(1-\psi^{p_2})(1-\psi^{2})(1-\psi^{p_1})(1-\psi^{1}),\\
W_k^n(0)=\frac{\psi^3}{\Delta \rho_3(t_3^-)}, \quad\forall k\in K(n,t_1^-,3,t_3^+)\backslash \left(\{p_2\} \cup K(n,t_1^-,2,t_2^+) \right),\\
W_{p_2}^n(0)=\frac{\psi^3}{\Delta \rho_3(t_3^-)}+ (1-\psi^3)\frac{\psi^{p_2}}{\Delta \rho_{p_2}(t_{p_2}^-)},\\
W_k^n(0)=\frac{\psi^3}{\Delta \rho_3(t_3^-)}+ (1-\psi^3)(1-\psi^{p_2})\frac{\psi^{2}}{\Delta \rho_{2}(t_{2}^-)}, \quad \forall k\in K(n,t_1^-,2,t_2^+)\backslash \{p_1 \cup 1\},\\
W_{p_1}^n(0)=\frac{\psi^3}{\Delta \rho_3(t_3^-)}+ (1-\psi^3) (1-\psi^{p_2}) \left(\frac{\psi^2}{\Delta \rho_2(t_2^-)}+(1-\psi^2)\frac{\psi^{p_1}}{\Delta \rho_{p_1}(t_{p_1}^-)}\right),\\
W_1^n(0)=\frac{\psi^3}{\Delta \rho_3(t_3^-)}+ (1-\psi^3) (1-\psi^{p_2}) \left(\frac{\psi^2}{\Delta \rho_2(t_2^-)}+(1-\psi^2)(1-\psi^{p_1})\frac{\psi^{1}}{\Delta \rho_{1}(t_{1}^-)}\right).\\
 \end{array}\right.
 \end{equation}
Thus we have, 
$$\vert W_b^n \vert \leq \left(1+\frac{3}{2^{n+1}\rho^*}\right)^5 < \exp \left(\frac{15}{2\rho^*}\right).$$ 
We can notice that if a wave $k$ interacts with the SV trajectory $n$ times then $W_k^n$ will be decomposed into a sum of $n$ terms. We want to find a bound of $W_k^n$ which does not depend on $n$. For instance, we have 
$$\vert W_1^n(0)\vert \leq\left(1+\frac{3}{2^{n+1}\rho^*}\right)^2 \left(\vert \frac{\psi^3}{\Delta \rho_3(t_3^-)}\vert+ \vert (1-\psi^3)\vert \left(\vert \frac{\psi^2}{\Delta \rho_2(t_2^-)}\vert+\vert(1-\psi^2)\vert \frac{2}{\rho^*}\right)\right).$$
The two following points are the main tools to prove Lemma  \ref{coco}:
\begin{itemize}
\item The wave $1$ (resp. the wave $2$) interacts with the SV trajectory modifying the speed of the SV at time $t_1$ (resp. at time $t_2$) with $t_1<t_2$ and $1\in K(n,t_1,t_2,2)$ then $\rho_L^{2}(t_2^-) \in  (0,\check{\rho}_{\alpha})$. This statement is proved in Lemma \ref{pp}.
\item Since $\rho_L^{2}(t_2^-) \in  (0,\check{\rho}_{\alpha})$, from Lemma \ref{bababa}, we have $ 
\left(\vert \frac{\psi^2}{\Delta \rho_2(t_2^-)}\vert+\vert(1-\psi^2)\vert \frac{2}{\rho^*}\right)\leq \frac{2}{\rho^*}.$ We conclude that 
$$\vert W_1^n(0)\vert \leq \left(1+\frac{3}{2^{n+1}\rho^*}\right)^2 \left(\vert \frac{\psi^3}{\Delta \rho_3(t_3^-)}\vert+ \vert (1-\psi^3)\vert \frac{2}{\rho^*}\right).$$
Repeating the same process, we get
$$\vert W_1^n(0)\vert \leq \frac{2\exp\left( \frac{3}{\rho^*}\right)}{\rho^*},$$
which does not depend on $n$.
\end{itemize}

\textbf{Proof of Theorem \ref{main1}:} From Lemma \ref{ffff}, for every $T>0$ and $n\in N^*$
$$\sum_{k\in K(n,T)} \vert \Delta \rho^n_k(T)\xi_k^n(T)\vert+\vert \xi^n_b(T)\vert < C\sum_{k\in K(n,0)} \vert \Delta \rho^n_k(0)\xi_k^n(0)\vert+\vert \xi^n_b(0)\vert,$$
with $C>0$ a constant independent of $n$. Thus, using Section \ref{WW}, we have
$$ \Vert \rho^{2,n}(t)-\rho^{1,n}(t) \Vert_{L^1(\R)} + \vert y^{2,n}(t)-y^{1,n}(t) \vert \leq C(  \Vert \rho_0^{2,n}-\rho_0^{1,n} \Vert_{L^1(\R)} + \vert y_0^{2}-y_0^{1} \vert),$$
From \cite[Lemma 3]{DP14} we pass to the limit of the previous inequality and we conclude the proof of Theorem \ref{main1}. \cqfd

\begin{remark} In the particular case presented in Figure \ref{2cl}, $C=\frac{2\exp\left( \frac{3}{\rho^*}\right)}{\rho^*}$.
\end{remark}
\appendix
\section*{Appendix}
\section{Proof of Lemma \ref{bababa}} \label{app::bababa}
This proof is based on the equality $\rho^*=\check{\rho}_\alpha+\hat{\rho}_\alpha$.
\begin{itemize}
\item We have $$1-\psi(\rho_L,\rho_R)=\left\{ \begin{array}{cl}
\frac{\rho_l}{\rho_L+\rho_R-\rho^*} &  \text{if} \quad (\rho_R>\rho^*  \, \, \& \, \,  \rho_L\in [0,\check{\rho}_\alpha]\cup[\hat{p}_\alpha,\rho^*]),\\
\frac{\rho_L}{\rho_R} &  \text{if} \quad (\rho_R>\rho^* \, \, \& \, \, \rho_L\in [\rho^*,\rho_R]) \quad \text{or} \quad  (\rho^* \leq \rho_R <\rho_L), \\
0 & \text{otherwise}.\\
\end{array}\right.$$
Thus, if $(\rho_R>\rho^*  \, \, \& \, \,  \rho_L\in [0,\check{\rho}_\alpha]\cup[\hat{\rho}_\alpha,\rho^*])$ or $(\rho_R>\rho^* \, \, \& \, \, \rho_L\in [\rho^*,\rho_R])$ we have $\vert 1-\psi(\rho_L,\rho_R)\vert \leq 1$ and if $(\rho^* \leq \rho_R <\rho_L)$, using $\rho_L-\rho_R\leq \frac{3}{2^{n+1}}$, we conclude that 
$ \vert 1-\psi(\rho_L,\rho_R)\vert \leq 1+\frac{3}{2^{n+1}\rho^*}.$
\item We get $$\frac{\psi(\rho_L,\rho_R)}{\rho_R-\rho_L}=\left\{ \begin{array}{cl}
\frac{\rho_R-\rho^*}{(\rho_L+\rho_R-\rho^*)(\rho_R-\rho_L)} & \quad \text{if} \quad (\rho_R>\rho^*  \, \, \& \, \,  \rho_L\in [0,\check{\rho}_\alpha]\cup[\hat{\rho}_\alpha,\rho^*]),\\
\frac{1}{\rho_R} & \quad \text{if} \quad (\rho_R>\rho^* \, \, \& \, \, \rho_L\in [\rho^*,\rho_R]) \quad \text{or} \quad  (\rho^* \leq \rho_R <\rho_L). \\
\end{array}\right.$$
If $(\rho_R>\rho^* \, \, \& \, \, \rho_L\in [\rho^*,\rho_R]) \quad \text{or} \quad  (\rho^* \leq \rho_R <\rho_L)$ we have immediately  $\frac{1}{\rho_R}\leq \frac{2}{\rho^*}$.
If  $(\rho_R>\rho^*  \, \, \& \, \,  \rho_L\in [0,\check{\rho}_\alpha])$ then 
$\vert \frac{\rho_R-\rho^*}{(\rho_L+\rho_R-\rho^*)(\rho_R-\rho_L)} \vert \leq \frac{1}{\rho^*-\check{\rho}_{\alpha}} \leq \frac{2}{\rho^*}$ using $\rho^* \geq 2\check{\rho}_{\alpha}$. If $(\rho_R>\rho^*  \, \, \& \, \,  \rho_L\in [\hat{\rho}_\alpha,\rho^*])$ then 
$\vert \frac{\rho_R-\rho^*}{(\rho_L+\rho_R-\rho^*)(\rho_R-\rho_L)} \vert \leq \frac{1}{(\rho_L+\rho_R-\rho^*)} \leq \frac{1}{\hat{\rho}_{\alpha}} \leq \frac{2}{\rho^*}$ using $2\hat{\rho}_{\alpha}\geq \rho^*$.
\item If $\rho_R>\rho^*  \, \, \& \, \,  \rho_L\in [0,\check{\rho}_\alpha]$, then $\frac{1}{\rho_R-\rho_L} \leq \frac{1}{\rho^*-\check{\rho}_\alpha} \leq \frac{2}{\rho^*} $ using $2\check{\rho}_\alpha  \leq \rho^*$. Thus 
 $\vert \frac{\psi(\rho_L,\rho_R)}{\rho_R-\rho_L} \vert +\vert (1-\psi(\rho_L,\rho_R))\vert\frac{2}{\rho^*}= \frac{\psi(\rho_L,\rho_R)}{\rho_R-\rho_L} +(1-\psi(\rho_L,\rho_R))\frac{2}{\rho^*} \leq  \frac{2}{\rho^*}(1-\psi(\rho_L,\rho_R)+\psi(\rho_L,\rho_R))=\frac{2}{\rho^*}$.
 
\end{itemize}

\section{Proof of Lemma \ref{coco}} \label{explicit}
 From Lemma \ref{bb}, Lemma \ref{cc}, Section \ref{111} and Section \ref{explication}, \eqref{ffffefe} holds true. The proof of  the estimate \eqref{vital} is based on the following lemma
\begin{lemma} \label{pp} We only consider the interactions are described in Lemma \ref{coco}. We assume the wave $1$ (resp. the wave $2$) interacts with the SV  modifying the speed of the SV at time $t_1$ (resp. at time $t_2$) with $t_1<t_2$ and $1\in K(n,t_1,t_2,2)$ then $\rho_L^{2}(t_2^-) \in  (0,\check{\rho}_{\alpha})$.
\end{lemma}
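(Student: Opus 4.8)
The plan is to read the admissible configurations off the constrained Riemann solver and then to propagate a single scalar datum — the density the SV leaves immediately on its right — backwards along the genealogy that, by hypothesis, links wave $1$ to wave $2$.

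First I would determine which of the interactions allowed in Lemma \ref{coco} actually change $\dot y$. By Lemma \ref{cc} and the explicit form \eqref{ll} of $\psi$, a wave--SV interaction changes the SV speed exactly when $\psi(\rho_L^k(\bar t^-),\rho_R^k(\bar t^-))\neq 0$; by \eqref{ll} this forces $\rho_R^k(\bar t^-)>\rho^*$ and puts us in one of the two sub-cases of Figure \ref{modbus} (a wave coming from the right of the SV), with $\rho_L^k(\bar t^-)$ — the density between the SV and the incoming wave — lying in $[0,\check\rho_\alpha]\cup[\hat\rho_\alpha,\rho^*]\cup[\rho^*,\rho_R^k(\bar t^-)]$. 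Applying this at $t_2$, the lemma reduces to excluding $\rho_L^2(t_2^-)\ge\hat\rho_\alpha$; the endpoints of $(0,\check\rho_\alpha)$ are degenerate values, e.g. $\rho_L^2(t_2^-)=\check\rho_\alpha$ would make the interaction at $t_2$ a non-classical one, excluded in the setting of Lemma \ref{coco}.

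The core of the proof is a propagation statement. Applying the classification at $t_1$, the interaction of wave $1$ with the SV is of Figure \ref{modbus} type, and there the outgoing wave remains on the right of the SV; hence for $t$ slightly above $t_1$ the density immediately right of the SV equals the left state $\rho_L^1(t_1^+)$ of that outgoing wave, and I would check — by inspecting the constrained Riemann solver in the two sub-cases of Figure \ref{modbus}, using $\rho^*=\check\rho_\alpha+\hat\rho_\alpha$ as in Appendix \ref{app::bababa} — that $\rho_L^1(t_1^+)\le\check\rho_\alpha$: once a shock arriving from the right slows the SV, the state it leaves immediately ahead of it has density at most $\check\rho_\alpha$. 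Finally I would invoke $1\in K(n,t_1,t_2,2)$: after replacing $t_1$, if needed, by the largest time before $t_2$ at which an ancestor of wave $2$ meets the SV — so that the genealogical path from wave $1$ at $t_1^+$ to wave $2$ at $t_2^-$ contains no intermediate wave--SV interaction — Lemma \ref{123} and Lemma \ref{bb} give that at each wave--wave interaction along the path the outgoing wave inherits the left state of its parent lying toward the SV, and the wave of the path that is closest to the SV always carries the density sitting just to the right of the SV. Since that density cannot be altered without a wave--SV interaction, which is forbidden along the path, it stays constant, so $\rho_L^2(t_2^-)=\rho_L^1(t_1^+)\le\check\rho_\alpha$, contradicting $\rho_L^2(t_2^-)\ge\hat\rho_\alpha$. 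This yields $\rho_L^2(t_2^-)\in(0,\check\rho_\alpha)$.

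I expect the main obstacle to be the inequality $\rho_L^1(t_1^+)\le\check\rho_\alpha$, which requires a careful case split over the outgoing configurations of Figure \ref{modbus}, according to whether the SV accelerates or decelerates at $t_1$ and whether the incoming wave is a shock or a small rarefaction fan. A second, combinatorial, difficulty is to justify the reduction to $t_1$ maximal and to rule out that another wave--SV interaction on $(t_1,t_2)$ disturbs the branch of the genealogy feeding wave $2$ — the analogue, for interactions from the right, of the maximality argument used in the proof of Lemma \ref{finite}.
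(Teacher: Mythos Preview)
Your overall strategy—track the density just to the right of the SV along the genealogical chain—is close in spirit to the paper's, but the key geometric claim it rests on is false. You assert that in the Figure~\ref{modbus} interactions ``the outgoing wave remains on the right of the SV''; in fact the opposite happens. In both sub-cases of Figure~\ref{modbus} the SV overtakes the wave: before the interaction the SV sits in the region $\rho_L^k$ with speed $\omega(\rho_L^k)$, the wave has speed $1-\rho_L^k-\rho_R^k<\omega(\rho_L^k)$, and after the interaction the SV is in $\rho_R^k>\rho^*$ with speed $1-\rho_R^k$, still strictly larger than the wave speed (by $\rho_L^k>0$). So the wave ends up on the \emph{left} of the SV. Since by Lemma~\ref{cc} the wave itself is unchanged, we also have $\rho_L^1(t_1^+)=\rho_L^1(t_1^-)$, and this value lies in $[0,\check\rho_\alpha]\cup[\hat\rho_\alpha,1]$; thus your second claim $\rho_L^1(t_1^+)\le\check\rho_\alpha$ is also false in general.

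This reversal is precisely what drives the paper's argument. Since wave $1$ is on the left of the SV for $t>t_1$ while wave $2$ reaches the SV from the right at $t_2$, the hypothesis $1\in K(n,t_1,t_2,2)$ forces the genealogical chain to cross the SV from left to right somewhere on $(t_1,t_2)$. The paper takes $t_0$ to be the \emph{maximal} time at which an ancestor $k$ of wave $2$ (with $1\prec_n k\prec_n 2$) meets the SV; by maximality this interaction must leave $k$ on the right, hence it is an interaction from the left, of type Figure~\ref{NSleft}~a) or Figure~\ref{left1} (Figure~\ref{NSleft}~b) being excluded in the setting of Lemma~\ref{coco}). In both of those cases $\rho_L^k(t_0^-)=\rho_L^k(t_0^+)\in[0,\check\rho_\alpha]$, and \emph{now} your propagation idea works: no wave can hit $k$ or its descendants from the left on $(t_0,t_2)$ (otherwise $t_0$ would not be maximal), so the left state is preserved and $\rho_L^2(t_2^-)\in[0,\check\rho_\alpha]$. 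Your ``replace $t_1$ by the largest time'' step is therefore the right move, but the interaction at that largest time is from the left, not of Figure~\ref{modbus} type.
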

\begin{proof} A wave modifies the speed of the SV only if a wave comes from the right of the SV trajectory (see Figure \ref{modbus} \textcolor{red}{a)}, Figure  \ref{modbus} \textcolor{red}{b)} and Figure \ref{NSright} \textcolor{red}{a)}). Using $1\in K(n,t_1,t_2,2)$ and Lemma \ref{entropy}, some waves $k$ with $1 \prec_n k$  and $k \prec_n 2$ interact with the SV on $(t_1,t_2)$. Let $k$ be such that the time $t_0$ is the maximal interaction time on $(t_1,t_2)$. Then, 	necessarily, the interaction is from the left and of type Figure  \ref{NSleft} \textcolor{red}{a)} and Figure  \ref{left1} (Figure  \ref{NSleft} \textcolor{red}{b)}   being excluded). Thus, $\rho_L^k(t_0^-)=\rho_L^k(t_0^+) \in [0,\check \rho_{\alpha}]$. Moreover, no wave can interact from the left with $k$ or any $k'$ verifying $k \prec_n k'$ on $(t_0,t_2)$, otherwise $t_0$ would not be maximal. This implies that $k' \succ_n k$ and for $t\in (t_0^+,t_2)$, $\rho_L^{k'}(t)=\rho_L^k(t_0^+) \in [0,\check \rho_{\alpha}]$. In particular,  $\rho_L^2(t_2^-)=[0,\check \rho_{\alpha}]$.
\end{proof}


\textbf{Proof of Lemma \ref{coco}.}
Since the  \textbf{\textcolor{blue}{NC1-b)}} interactions, \textbf{\textcolor{blue}{NC2-b)}} interactions and the \textbf{\textcolor{blue}{NC3}} interactions have the same effect as mutliple wave-wave interactions and wave-SV interactions. We can restrict our study to wave-wave interactions and wave-SV interactions.
Before we deal with the general case, we will consider two particular cases.
 \ \\
 \par  \textbf{Particular case $1$:} we assume that there exist a wave $1$ and a wave $2$ interacting with the SV trajectory at $t=t_1$ and $t=t_2$ as a wave-SV interaction modifying the speed of the SV  with $1\in K(n,t_1,t_2,2)$. Let $t_2$ be the first interation time after $t_1$ such that $1 \prec_n 2$. From Lemma \ref{cc},
\begin{equation} \label{qwdqwd}
\left\{ \begin{array}{l}\xi_b(t_2^+)=(1-\psi(\rho_L^{2}(t_2^-),\rho_R^{2}(t_2^-)))\xi_b(t_2^-)+\frac{\psi(\rho_L^{2}(t_2^-),\rho_R^{2}(t_2^-))}{\Delta \rho_2(t_2^-)}\Delta \rho_2(t_2^-)\xi_{2}(t_2^-),\\
\Delta \rho_2(t_2^+) \xi_2(t_2^+)=\Delta \rho_2(t_2^-) \xi_2(t_2^-), \\ 
\xi_b(t_1^+)=(1-\psi(\rho_L^{1}(t_1^-),\rho_R^{1}(t_1^-)))\xi_b(t_1^-)+\frac{\psi(\rho_L^{1}(t_1^-),\rho_R^{1}(t_1^-))}{\Delta \rho_1(t_1^-)}\Delta \rho_1(t_1^-) \xi_{1}(t_1^-),\\
\Delta \rho_1(t_1^+) \xi_1(t_1^+)=\Delta \rho_1(t_1^-) \xi_1(t_1^-). \\ \end{array} \right.
\end{equation}
Combining Lemma \ref{2W}, $1\in K(n,t_1^-,t_2^+,2)$ with \eqref{qwdqwd}, we have $\Delta \rho_{2}(t_2^+) \xi_2(t_2^+)= \Delta \rho_1(t_1^-) \xi_1(t_1^-)+\sum_{k\in K(n,t_1^-,t_2^+,2)\backslash \{1\}} \Delta \rho_k \xi_k$. We may have multiple  $m$ wave-SV interactions coming from the right. More precisely, for $i=\{1,\cdots,m\}$ with $m\in \N^*$, we assume that the $p_i$-wave intersects the SV trajectory at time $t_{p_i}$. Since $t_2$ is the first interaction time from the right,  we have  $t_1<t_{p_1}<\cdots<t_{p_m}<t_2$ and  $1 \notin K(n,t_1^+,t_{p_i}^-,p_i)$. To simplify the notations, we will write $\psi(\rho_L^{i}(t_i^-),\rho_R^{i}(t_i^-))$ as $\psi^i$.  From Lemma \ref{cc},
\begin{equation} \label{akak} \left\{ \begin{array}{l}
\xi_b(t_{p_i}^+)=(1-\psi^{p_i})  \xi_b(t_{p_i}^-)+\frac{\psi^{p_i}}{\Delta \rho_{p_i}(t_{p_i}^-)}\Delta \rho_{p_i}(t_{p_i}^-)\xi_{p_i}(t_{p_i}^-).\\
\rho_{p_i}(t_{p_i}^+)\xi_{p_i}(t_{p_i}^+)=\rho_{p_i}(t_{p_i}^-)\xi_{p_i}(t_{p_i}^-),
\end{array} \right.
\end{equation}
Since no other interaction occurs with the SV trajectory from the right  on $(t_{p_{i-1}},t_{p_{i}})$ for $i=\{2,\cdots,m\}$, 
we get  $\xi_b(t_{p_i}^-)=\xi_b(t_{p_{i-1}}^+)$, $\xi_b(t_2^-)=\xi_b(t_{p_{m}}^+)$ and $\xi_b(t_{1}^+)=\xi_b(t_{p_{1}}^-)$. Combining \eqref{qwdqwd}  with \eqref{akak} we conclude that
\begin{equation} \label{gerg} \begin{array}{lll}
\xi_b(t_{2}^+)&=& \prod_{i=1}^{m+1} (1-\psi^{p_i}) \xi_b(t_{1}^-) +\frac{\psi^2}{\Delta \rho_2(t_{2}^-)}\Delta \rho_2(t_{2}^-)\xi_{2}(t_{2}^-)\\ &+ &\sum_{k=0}^{m} \frac{\psi^{p_{k}}}{\Delta \rho_{p_{k}}(t_{p_k}^-) } \Delta \rho_{p_{k}}(t_{p_k}^-) \xi_{p_{k}}(t_{p_k}^-) \prod_{j=0}^{m-k} (1-\psi^{p_{m-j+1}}),
\end{array}
\end{equation}
with  $p_{m+1}:=2$ and $p_0:=1$. Now $1 \notin K(n,t_1,t_2^-,p_i)$. From Lemma \ref{bb}, 
\begin{equation} \label{caca3} \Delta \rho_2(t_{2}^-)\xi_{2}(t_{2}^-)=\sum_{k \in K(n,t_1^-,t_2,2)\backslash \cup_{i=0}^m K(n,t_1,t_{p_i},p_i)}\Delta \rho_k \xi_k+ \sum_{i=0}^m \sum_{k \in K(n,t_1,t_{p_i},p_i) }\Delta \rho_k \xi_k. \end{equation}
Combining \eqref{akak},\eqref{gerg} and \eqref{caca3}, we conclude that
\begin{equation} \label{f2}
\left\{\begin{array}{l}
\xi_b(t_2^+)=W_b(t_1^-,t_2^+) \xi_b(t_1^-)+ \sum_{k \in K(n,t_1^-)} W_k(t_1^-,t_2^+) \Delta \rho_k \xi_k,\\
\Delta \rho_2(t_2^+) \xi_2(t_2^+)=\sum_{k \in K(n,t_1^-,t_2^+,2)} \Delta \rho_k \xi_k,   
\end{array}\right.
\end{equation}

with $W_b(t_1^-,t_2^+)=\prod_{i=1}^{m+1} (1-\psi^{p_i})$ and 
\begin{equation} W_k(t_1^-,t_2^+)=\left\{ \begin{array}{l}
\frac{\psi^2}{\Delta \rho_2(t_2^-)} \quad \text{if} \, \, k\in K(n,t_1^-,t_2,2)\backslash \cup_{i=0}^m K(n,t_1,t_{p_i},p_i), \\
\frac{\psi^2}{\Delta \rho_2(t_2^-)}+ (1-\psi^2)\frac{\psi^{p_{i}}}{\Delta \rho_{p_{i}}(t_{p_i}^-)} \prod_{j=1}^{m-i} (1-\psi^{p_{m-j+1}}) \quad \text{if} \, \, \left\{\begin{array}{l} k \in K(n,t_1^-,t_{p_i},p_i),  \\ i \in \{0,\cdots,m\},\end{array}\right.\\
0 \quad \text{otherwise}.
\end{array}\right.
\end{equation}
From Lemma \ref{bababa}, $ \vert \frac{\psi^{p_{i}}}{\Delta \rho_{p_{i}}} \vert \leq \frac{2}{\rho^*}$ for every $i\in \{0,\cdots,m-1\}$ and $\vert \prod_{j=1}^{m-i} (1-\psi^{p_i}) \vert \leq (1+\frac{3}{2^{n+1}\rho^*})^{m}.$ Moreover,
$$\vert \frac{\psi^2}{\Delta \rho_2}+ (1-\psi^2)\frac{\psi^{p_{i}}}{\Delta \rho_{p_{i}}} \prod_{j=1}^{m-i} (1-\psi^{p_{m-j+1}})\vert \leq (1+\frac{3}{2^{n+1}\rho^*})^{m}\left (\vert \frac{\psi^{2}}{\Delta \rho_2} \vert+ \vert (1-\psi^2\vert \frac{2}{\rho^*}\right).$$ Combining Lemma \ref{bababa} and Lemma \ref{pp}, we conclude that 
$$\vert \frac{\psi^2}{\Delta \rho_2}+ (1-\psi^2)\frac{\psi^{p_i}}{\Delta \rho_{p_{i}}} \prod_{j=1}^{m-i} (1-\psi^{p_{m-j+1}})\vert \leq    \frac{2 \left(1+\frac{3}{2^{n+1}\rho^*}\right)^{m}}{\rho^*}.$$ Since, for every $\tilde{\rho}_j^n, \tilde{\rho}_i^n \in \widetilde{\mathcal{M}_n}$, $\frac{1}{2^{n+1}}\leq \vert \tilde{\rho}_j^n-\tilde{\rho}_i^n\vert$ we have $\frac{m}{2^{n+1}}\leq TV(\rho_0)$, whence
\begin{equation} \label{lala}(1+\frac{3}{2^{n+1}\rho^*})^{m} \leq (1+\frac{3}{2^{n+1}\rho^*})^{2^{n+1}TV(\rho_0)}\leq \exp\left(\frac{3TV(\rho_0)}{\rho^*}\right).\end{equation} We conclude that 
$$\max(W_b(t_1^-,t_2^+),W_k(t_1^-,t_2^+))\leq  \frac{2 \exp\left(\frac{3TV(\rho_0)}{\rho^*}\right)}{\rho^*}.$$


  \par  \textbf{Particular case $2$:} we assume that there exist $m$ wave denoted by $1,\cdots,m$ interacting with the SV trajectory at $t=t_1, t=t_2, \cdots, t=t_m$ respectively as a wave-SV interaction modifying the speed of the SV  with $1\prec_n 2 \prec_n \cdots \prec_n \prec_n m$ and no other interactions with the SV trajectory occur. That is to say  for $i=\{1,\cdots,m-1\}$, $i\in K(n,t_i,t_{i+1},i+1)$ and $\xi_b(t_{i-1}^+)=\xi_b(t_{i}^-)$.
  Substituting $p_i$ by $i$ in \eqref{gerg},  
we conclude that
 \begin{equation} \label{gerg2}
\xi_b(t_m^+)= \prod_{i=1}^m (1-\psi^{i})\xi_b(t_1^-) +\frac{\psi^{m}}{\Delta \rho_m}\Delta \rho_m\xi_{m}+\sum_{k=1}^{m-1} \frac{\psi^{k}}{\Delta \rho_{k}} \Delta \rho_{k} \xi_{k}\prod_{j=0}^{m-k-1} (1-\psi^{m-j}).
\end{equation}
  Since for $i=\{1,\cdots,m-1\}$, $i\in K(n,t_i,t_{i+1},i+1)$ we have, for every $k\in \{1,\cdots,m\}$,
 \begin{equation} \begin{array}{lll} \label{2222}
\Delta \rho_{k} \xi_k&=& \Delta \rho_{k-1} \xi_{k-1} +  \sum_{l\in K(n,t_1,t_k,k)\backslash \{K(n,t_1,t_{k-1},{k-1})\}} \Delta \rho_l \xi_l, \\
&=&\sum_{i=1}^{k} \sum_{l\in K(n,t_1,t_i,i)\backslash \{K(n,t_1,t_{i-1},{i-1})\}} \Delta \rho_l \xi_l.\\
\end{array}
\end{equation}

Above, by convention $K(n,t_1,t_{0},0)=\emptyset$. Using \eqref{gerg2} and \eqref{2222}, we conclude that 
\begin{equation} \label{f3}
\xi_b(t_m^+)= \prod_{i=1}^m (1-\psi^{i})\xi_b(t_1^-)+
\sum_{i=1}^{m}W_i \left( \sum_{k\in K(n,t_1,t_i,i)\backslash K(n,t_1,t_{i-1},i-1)} \Delta \rho_k \xi_k \right),
\end{equation}
with 
\begin{equation} \label{r1}
\begin{array}{l}
W_i:=\left( \frac{\psi^{m}}{\Delta \rho_m}+ \sum_{k=i}^{m-1} \frac{\psi^{k}}{\Delta \rho_{p_{k}}} \prod_{j=0}^{m-k-1} ((1-\psi^{m-j}) \right), \quad i=\{1,\cdots,m-1\},\\
W_m=\frac{\psi^{m}}{\Delta \rho_m}.
\end{array}
\end{equation}
 We notice that $W_{i}= W_{i+1}+\frac{\psi^{i}}{\Delta \rho_{i}} \prod_{j=0}^{m-i-1} (1-\psi^{m-j})$. We conclude that 
 \begin{equation} \label{cas1}
W_b(t_1^-,t_m^+)= \prod_{i=1}^m (1-\psi^{i}),
 \end{equation} and 
 \begin{equation} \label{cas2} W_k(t_1^-,t_m^+)=\left\{ \begin{array}{l}
W_m \quad \text{if} \, \, k\in K(n,t_1^-,t_m^+,m)\backslash K(n,t_1^-,t_{m-1}^+,m-1), \\
W_i \quad \text{if} \, \, k \in K(n,t_1^-,t_i,i)\backslash K(n,t_1^-,t_{i-1},i-1),  \quad i \in \{1,\cdots,m-1\},\\
0 \quad \text{otherwise},
\end{array}\right.
\end{equation}
with $W_m$ and $W_i$ defined in \eqref{r1}. From \eqref{lala}, $\vert W_b(t_1^-,t_m^+) \vert\leq \exp\left(\frac{3TV(\rho_0)}{\rho^*}\right)$. Using Lemma \ref{bababa},  we have $\vert \frac{\psi^{i}}{\Delta \rho_{p_{i}}} \vert \leq \frac{2}{\rho^*}$  for every $i\in \{1,\cdots,m\}$ and we conclude that
$$ 
\vert W_i\vert \leq  \vert \frac{\psi^{m}}{\Delta \rho_m}\vert + \sum_{k=i+2}^{m-1} \vert \frac{\psi^{k}}{\Delta \rho_{p_{k}}}\vert  \prod_{j=0}^{m-k-1} \vert (1-\psi^{m-j})\vert +\vert \frac{\psi^{i+1}}{\Delta \rho_{p_{i+1}}} \vert \prod_{j=0}^{m-i-2} \vert (1-\psi^{m-j})\vert +\frac{2}{\rho^*} \prod_{j=0}^{m-i-1} \vert (1-\psi^{m-j}) \vert. 
$$
Moreover, using Lemma \ref{bababa},
\begin{equation} \begin{array}{ll}
\vert \frac{\psi^{i+1}}{\Delta \rho_{p_{i+1}}} \prod_{j=0}^{m-i-2} (1-\psi^{m-j})+\frac{\psi^{i}}{\Delta \rho_{p_{i}}} \prod_{j=0}^{m-i-1} (1-\psi^{m-j}) &=  \prod_{j=0}^{m-i-2} (1-\psi^{m-j}) \left(\frac{\psi^{i+1}}{\Delta \rho_{p_{i+1}}} + (1-\psi^{i+1})\frac{\psi^{i}}{\Delta \rho_{p_{i}}} \right),\\
&\leq \prod_{j=0}^{m-i-2} \vert (1-\psi^{m-j})\vert \frac{2}{\rho^*}.
\end{array}
\end{equation}
Thus, 
$$  \vert W_i\vert   \leq  \vert \frac{\psi^{m}}{\Delta \rho_m}\vert + \sum_{k=i+3}^{m-1} \vert \frac{\psi^{k}}{\Delta \rho_{p_{k}}}\vert  \prod_{j=0}^{m-k-1} \vert (1-\psi^{m-j})\vert +\vert \frac{\psi^{i+2}}{\Delta \rho_{p_{i+2}}} \vert \prod_{j=0}^{m-i-3} \vert (1-\psi^{m-j})\vert +\frac{2}{\rho^*} \prod_{j=0}^{m-i-2} \vert (1-\psi^{m-j}) \vert.$$
By induction,  we conclude that 
$$\vert W_i \vert \leq \vert \frac{\psi^{m}}{\Delta \rho_m}\vert+ \vert (1-\psi^m) \vert \frac{2}{\rho^*} \leq  \frac{2}{\rho^*},$$
whence 
$$\max(W_b(t_1^-,t_2^+),W_k(t_1^-,t_2^+))\leq  \frac{2 \exp\left(\frac{3TV(\rho_0)}{\rho^*}\right)}{\rho^*}.$$
 \ \\
 \par  \textbf{General case:} we assume that there exist $m$ wave denoted by $1,\cdots,m$ interacting with the SV trajectory at $t=t_1, t=t_2, \cdots, t=t_m$ respectively as a wave-SV interaction modifying the speed of the SV  with $1\prec_n 2 \prec_n \cdots \prec_n \prec_n m$, that is to say $i=\{1,\cdots,m-1\}$, $i\in K(n,t_i,t_{i+1},i+1)$.  Let  $t_i$ be the first interation time after $t_{i-1}$ such that $i-1 \prec_n i$ for $i=\{2,\cdots,m\}$.
  Besides, for $i=\{1,\cdots,m-1\}$, we may have multiple  $n_i$ wave-SV interactions coming from the right on $(t_{i},t_{i+1})$.
 More precisely, for $j=\{1,\cdots,n_i\}$, we assume that the $p_{i,j}$-wave interacts  with the SV trajectory as a  wave-SV interactions at time $t_{p_{i,j}}$ such that $t_i<t_{p_{i,1}}<\cdots<t_{p_{i,n_i}}<t_{i+1}$ and $p_{i,j} \in K(n,t_{p_{i,j}},t_{i+1}^-,i+1)$ et $i \notin K(n,t_i,t_{i+1}^-,p_{i,j})$. We introduce the  following notation
$$K_{j}(n,t_1):=\left\{K(n,t_1,t_j,j) \backslash \left( \cup_{q=0}^{n_j-1} K(n,t_1,t_{j,n_j-q},p_{j,n_j-q} \cup K(n,t_1,j-1))\right)\right\}.
$$
$K_j(n,t_1)$ denotes the set of classical shocks at time $t_1$ whom the elements are the ancestors of  the $j^{\text{th}}$-wave with $j\in K(n, t_j)$ and they never interacts again with the SV trajectory  over $[t_1,t_j)$. We notice that $$K(n,t_1,t_m,p_m)=\left(\cup_{j=1}^m K_j(n,t_1)\right) \cup \left( \cup_{j=2}^m \cup_{q=0}^{n_{j}-q}K(n,t_1,t_{j,n_j-q},p_{j,n_j-q})\right).$$
In the sequel, we will denote  $C_i$ by $C_i:= (1-\psi^{i})$. Combining \eqref{f2} and \eqref{f3} with straightforward computations we have 
\begin{equation} \label{fff}
\left\{\begin{array}{l}
\xi_b(t_m^+)=W_b^m(t_1^-,t_m^+) \xi_b(t_1^-)+\sum_{j=1}^{m}W_j^m \sum_{k \in K_j(n,t_1)} \Delta_k \rho_k,\\
\quad \quad \quad+\sum_{j=2}^{m} \sum_{q=0}^{n_j-1} W_{j,q}^m \sum_{k \in K(n,t_1,t_{j,n_j-q},p_{j,n_j-q})} \Delta_k \rho_k,\\
\Delta \rho_j \xi_j=\sum_{k \in K(n,t_1,t_m,j)} \Delta_k \xi_k, \quad \text{for every} \, \, j\in K(n,t_m),  \\
\end{array}\right.
\end{equation}
with 
\begin{equation}\label{a1}
 W_b^m(t_1^-,t_m^+)=\prod_{i=0}^{m-2} \left( C_{m-i}  \prod_{l=1}^{n_{m-i}} C_{p_{n_{m-i},l}} \right) (1-\psi^{1}),
 \end{equation}
\begin{equation} \label{a2}
W_j^m=\left|\begin{array}{l} 
\frac{\psi^{m}}{\Delta \rho_m} \quad \text{if} \, \, j=m, \\
\frac{\psi^{m}}{\Delta \rho_m}+ \sum_{k=j}^{m-1} \frac{\psi^{k}}{\Delta \rho_{p_{k}}} \prod_{i=0}^{m-k-1} C_{m-i}  \prod_{i=0}^{m-k-1} \prod_{l=1}^{n_{m-i}}C_{p_{n_{m-i},l}}, \quad \text{otherwise.}\\
\end{array}  \right.
\end{equation}
and for $j=\{1,\cdots,m\}$, $q=\{0,\cdots,n_j-1\}$, 
\begin{equation}  \label{a3345} W_{j,q}^m= W_{j}+ \prod_{i=0}^{m-j-1} \left(C_{m-i} \prod_{l=1}^{n_{m-i}}C_{p_{n_{m-i},l}}\right)C_j\prod_{l=n_j-q+1}^{n_{j}}C_{n_{j},l} \frac{\psi^{p_{j,n_j-q}}}{\Delta \rho_{p_{j,n_j-q}}}.
\end{equation}
If $q=0$, by convention we require that $\prod_{l=n_j-q+1}^{n_{j}}C_{n_{j},l}:=1.$ From \eqref{fff}, we conclude that 
\begin{equation} \label{b111}
\left\{\begin{array}{l}
\xi_b(t_m^+)=W_b(t_1^-,t_m^+) \xi_b(t_1^-)+ \sum_{k \in K(n,t_1^-)} W_k(t_1^-,t_m^+) \Delta_k \rho_k,\\
\Delta \rho_j(t_m^+) \xi_j(t_m^+)=\sum_{k \in K(n,t_1^-,t_m^+,j)} \Delta_k \xi_k, \quad \text{for every} \, \, j\in K(n,t_m^+),  \\
\end{array}\right.
\end{equation}
with $W_b^m(t_1^-,t_m^+)$ defined in \eqref{a1} and 
 \begin{equation} \label{cas3}W_k(t_1^-,t_m^+)=\left\{ \begin{array}{l}
W_j^m \quad \text{if} \, \, k\in K_j(n,t_1), \, j=\{1,\cdots,m\}, \\
W_{j,q}^m \quad \text{if} \, \, k \in k \in K(n,t_1,t_{j,n_j-q},p_{j,n_j-q}), \, i \in \{1,\cdots,m-1\}, \, q=\{0,\cdots,n_j\},\\
0 \quad \text{otherwise}.
\end{array}\right.
\end{equation}
where $W_j^m$ and $W_{j,q}^m$ are defined in \eqref{a3345}.
From now on, we prove that $W_b^m(t_1^-,t_m^+)$ defined in \eqref{a1} and $W_k(t_1^-,t_m^+)$ defined in  \eqref{cas3} are bounded independently of $n$. Let  $j=\{1,\cdots,m\}$, $q=\{0,\cdots,n_j-1\}$ . From Lemma \ref{bababa},  $\vert \prod_{i=0}^{m-j-1} \prod_{l=1}^{n_{m-i}}C_{n_{m-i},l}\vert \leq (1+\frac{3}{2^{n+1}\rho^*})^{n_m+\cdots+n_{j+1}}$ and $\max( \frac{\psi^{j}}{\Delta \rho_{p_{j}}},\frac{\psi^{p_{j,n_j-q}}}{\Delta \rho_{p_{j,n_j-q}}}) \leq \frac{2}{\rho^*}$. Since $\sum_{i=2}^m n_i \leq TV(\rho_0)2^{n+1}$ we have $ \vert \prod_{i=0}^{m-j-1} \prod_{l=1}^{n_{m-i}}C_{n_{m-i},l} \vert \leq \exp\left(\frac{3TV(\rho_0)}{\rho^*}\right)$. Thus, 
$$
\frac{\max(W_{j-1}^m,W_{j,q}^m)}{\exp\left(\frac{3TV(\rho_0)}{\rho^*}\right)} \leq  \vert \frac{\psi^{m}}{\Delta \rho_m}\vert+ \sum_{k=j}^{m-1} \vert \frac{\psi^{k}}{\Delta \rho_{k}}\vert \prod_{i=0}^{m-k-1} \vert C_{m-i}\vert+\prod_{i=0}^{m-j} \vert C_{m-i}\vert \frac{2}{\rho^*}.
$$
Since
$$
\sum_{k=j}^{m-1} \vert \frac{\psi^{k}}{\Delta \rho_{k}}\vert \prod_{i=0}^{m-k-1} \vert C_{m-i}\vert+\prod_{i=0}^{m-j} \vert C_{m-i}\vert \frac{2}{\rho^*}=$$
$$\sum_{k=j+1}^{m-1} \vert \frac{\psi^{k}}{\Delta \rho_{k}}\vert \prod_{i=0}^{m-k-1} \vert C_{m-i}\vert+\prod_{i=0}^{m-j-1} \vert C_{m-i}\vert \left(\vert \frac{\psi^{j}}{\Delta \rho_{j}}\vert+\vert 1- \frac{\psi^{j}}{\Delta \rho_{j}}\vert \frac{2}{\rho^*} \right).$$
From Lemma \ref{bababa} and Lemma \ref{pp}, 
$$\vert \frac{\psi^{j}}{\Delta \rho_[{j}}\vert+\vert 1- \frac{\psi^{j}}{\Delta \rho_{j}}\vert \frac{2}{\rho^*} \leq \frac{2}{\rho^*}.$$
We conclude that 
$$
\frac{\max(W_{j-1}^m,W_{j,q}^m)}{\exp(\frac{2^5}{\rho^*})} \leq  \vert \frac{\psi^{m}}{\Delta \rho_m}\vert+ \sum_{k=j+1}^{m-1} \vert \frac{\psi^{k}}{\Delta \rho_{k}}\vert \prod_{i=0}^{m-k-1} \vert C_{m-i}\vert+\prod_{i=0}^{m-j-1} \vert C_{m-i}\vert \frac{2}{\rho^*}.
$$
By induction, we deduce that 
$\frac{\max(W_{j-1}^m,W_{j,q}^m)}{\exp\left(\frac{3TV(\rho_0)}{\rho^*}\right)} \leq \frac{2}{\rho^*},$
whence, for every $k\in K(n,t_1^-)$,
$$\max(W_b^m(t_1^-,t_m^+),W_k(t_1^-,t_m^+)) \leq \frac{2\exp\left(\frac{3TV(\rho_0)}{\rho^*}\right)}{\rho^*},$$
with $W_b^m(t_1^-,t_m^+)$ defined in  \eqref{a1} and $W_k(t_1^-,t_m^+)$ defined  \eqref{cas3}.
We conclude the proof of Lemma \ref{coco} noticing for every $k\in K(n,t_1)$ that there exist $m \in N^*$, $j \in \{1,\cdots,m \}$, $n_j \in \N^*$ and $q\in \{0,\cdots,n_j-1\}$ such that 
$$ W_{k}(t_1,t_2)=\left\{ \begin{array}{l}
W_j^m, \\
W_{j,q}^m,\\
0,
\end{array}\right.$$
where $W_{k}(t_1,t_2)$ is defined in \eqref{ffffefe}, $W_j^m$ is defined in \eqref{a2} and $W_{j,q}^m$ is defined in \eqref{a3345}.


\footnotesize
\bibliographystyle{plain}
\bibliography{biblio}
\end{document}